\documentclass[11pt]{article}
\usepackage{smile}

\usepackage{fullpage}
\usepackage{lscape}
\usepackage{bigints}
\usepackage{framed}
\usepackage{mdframed}
\usepackage{enumerate}
\usepackage[T1]{fontenc}
\usepackage{moresize}
\usepackage{bm}
\usepackage{bbm}
\usepackage{dsfont}
\usepackage{amsmath}
\usepackage{amssymb}
\usepackage{amsthm}
\usepackage{amsfonts}
\usepackage{stmaryrd}
\usepackage{array}
\usepackage{mathrsfs}
\usepackage{mathtools} 
\usepackage{extarrows}
\usepackage{stackrel}
\usepackage{relsize,exscale}
\usepackage{scalerel}
\usepackage[nodisplayskipstretch]{setspace}
\usepackage{color}
\usepackage[usenames,dvipsnames]{xcolor}
\usepackage{cancel}
\usepackage{soul}
\usepackage{undertilde}
\usepackage{xfrac}
\usepackage{siunitx}
\usepackage{graphicx}
\usepackage{float}
\usepackage{rotating}
\usepackage{subcaption}
\usepackage{overpic}
\usepackage[all]{xy}
\usepackage{tikz}
\usetikzlibrary{arrows,matrix,positioning,calc,automata,patterns}
\usepackage{booktabs}
\usepackage{dcolumn}
\usepackage{multirow}
\usepackage{diagbox}
\usepackage{tabularx}
\usepackage{verbatim}
\usepackage{listings}
\usepackage[ruled,vlined]{algorithm2e}
\usepackage{fancyvrb}
\usepackage{hyperref}
\usepackage[round]{natbib}
\usepackage{sectsty}

\hypersetup{
    bookmarks=true,         
    unicode=false,          
    pdftoolbar=true,        
    pdfmenubar=true,        
    pdffitwindow=false,     
    pdfstartview={FitH},    
    pdftitle={Efficient estimation of measures of functional dependence},    
    pdfauthor={Author},     
    pdfsubject={Subject},   
    pdfcreator={Creator},   
    pdfproducer={Producer}, 
    pdfkeywords={functional dependence, semiparametric efficiency, cross-fitting}, 
    pdfnewwindow=true,      
    colorlinks=true,        
    linkcolor=blue,         
    citecolor=blue,         
    filecolor=blue,         
    urlcolor=cyan           
}

\usepackage{stackengine}
\stackMath
\newcommand\tenq[2][1]{%
\def\useanchorwidth{T}%
\ifnum#1>1%
\stackunder[0pt]{\tenq[\numexpr#1-1\relax]{#2}}{\!\scriptscriptstyle\thicksim}%
\else%
\stackunder[1pt]{#2}{\!\scriptstyle\thicksim}%
\fi%
}

\makeatletter
\DeclareRobustCommand\widecheck[1]{{\mathpalette\@widecheck{#1}}}
\def\@widecheck#1#2{%
    \setbox\z@\hbox{\m@th$#1#2$}%
    \setbox\tw@\hbox{\m@th$#1%
       \widehat{%
          \vrule\@width\z@\@height\ht\z@
          \vrule\@height\z@\@width\wd\z@}$}%
    \dp\tw@-\ht\z@
    \@tempdima\ht\z@ \advance\@tempdima2\ht\tw@ \divide\@tempdima\thr@@
    \setbox\tw@\hbox{%
       \raise\@tempdima\hbox{\scalebox{1}[-1]{\lower\@tempdima\box
\tw@}}}%
    {\ooalign{\box\tw@ \cr \box\z@}}}
\makeatother

\newcommand{\Rem}{\operatorname{Rem}}

\def\tr{\mathop{\text{tr}}\kern.2ex}

\def\P{{\mathrm P}}

\def\E{{\mathrm E}}
\def\R{{\mathbbm R}}

\def\d{{\mathrm d}}

\newcommand{\sgn}{\mathrm{sgn}}

\newcolumntype{L}[1]{>{\raggedright\let\newline\\\arraybackslash\hspace{0pt}}m{#1}}
\newcolumntype{C}[1]{>{  \centering\let\newline\\\arraybackslash\hspace{0pt}}m{#1}}
\newcolumntype{R}[1]{>{ \raggedleft\let\newline\\\arraybackslash\hspace{0pt}}m{#1}}
\newcolumntype{d}[1]{D{.}{.}{#1}}
\newcolumntype{H}{>{\setbox0=\hbox\bgroup}c<{\egroup}@{}}
\newcolumntype{Z}{>{\setbox0=\hbox\bgroup}c<{\egroup}@{\hspace*{-\tabcolsep}}}
\newcolumntype{b}{X}
\newcolumntype{s}{>{\hsize=.5\hsize}X}

\def\PP{{\mathbb{P}}}
\def\QQ{{\mathbb{Q}}}

\numberwithin{equation}{section}

\newtheorem{theorem}{Theorem}[section]

\newtheorem{proposition}{Proposition}[section]
\newtheorem{assumption}{Assumption}[section]
\newtheorem{corollary}{Corollary}[section]
\newtheorem{innercustomgeneric}{\customgenericname}
\providecommand{\customgenericname}{}
\newcommand{\newcustomtheorem}[2]{%
  \newenvironment{#1}[1]
  {%
   \renewcommand\customgenericname{#2}%
   \renewcommand\theinnercustomgeneric{##1}%
   \innercustomgeneric
  }
  {\endinnercustomgeneric}
}
\newcustomtheorem{customdefinition}{Definition}
\newcustomtheorem{customdefinitions}{Definitions}
\newcustomtheorem{customtheorem}{Theorem}
\newcustomtheorem{customassumption}{Assumption}
\newcustomtheorem{customlemma}{Lemma}
\newcustomtheorem{customexample}{Example}
\theoremstyle{definition}
\newtheorem{definition}{Definition}[section]
\newtheorem{example}{Example}[section]
\newtheorem{remark}{Remark}[section]

\makeatletter
\newcommand{\mylabel}[2]{#2\def\@currentlabel{#2}\label{#1}}
\makeatother

\usepackage{enumitem}

\graphicspath{{./fig3/}}

\allowdisplaybreaks

\begin{document}

\setlength{\abovedisplayskip}{5pt}
\setlength{\belowdisplayskip}{5pt}
\setlength{\abovedisplayshortskip}{5pt}
\setlength{\belowdisplayshortskip}{5pt}
\hypersetup{
  colorlinks,
  breaklinks,
  urlcolor=blue,
  linkcolor=blue,
  pdftitle={Efficient estimation of measures of functional dependence},
  pdfauthor={Holger Dette and Fang Han}
}


\title{\LARGE Efficient estimation of measures of functional dependence}

\author{
Holger Dette\thanks{Faculty of Mathematics, Ruhr University Bochum, Bochum, Germany; email: {\tt holger.dette@rub.de}}
~~~{\rm and}~~
Fang Han\thanks{Department of Statistics, University of Washington, Seattle, WA 98195, USA; e-mail: {\tt fanghan@uw.edu}}
}

\date{\today}

\maketitle

\vspace{-1em}

\begin{abstract}
Measures of functional dependence that characterize both independence and perfect dependence have attracted growing interest. Despite their appealing theoretical properties, existing rank- and graph-based estimators can suffer from substantial bias under general dependence, limiting their usefulness for statistical inference. Moreover, a systematic and practically accessible framework for mitigating such bias while achieving semiparametric efficiency remains largely unavailable. In this paper, we develop a general framework for efficient estimation of measures of functional dependence through sample-splitting and cross-fitting. Our approach enables systematic bias reduction and semiparametrically efficient inference, and thus offers a flexible and theoretically grounded alternative to existing rank- and graph-based methods.
\end{abstract}

\noindent{\bf Keywords}: functional dependence, efficient influence function, semiparametric efficiency, sample-splitting, cross-fitting.

\section{Introduction}

Measuring dependence between random variables $X$ and $Y$ is a fundamental problem in statistics, with applications ranging from independence testing and variable selection to sensitivity analysis and scientific discovery. Classical measures, such as Pearson's correlation coefficient and its rank-based counterparts, provide useful summaries of specific forms of association but may fail to capture more general nonlinear relationships. This limitation has motivated the development of dependence measures that characterize both independence and certain forms of functional dependence; see, for example, the influential discussions in \cite{MR0115203}, \cite{schweizer1981nonparametric}, and \cite{scarsini1984measures}.

Building on the earlier developments of \cite{trutschnig2011strong} and \cite{MR3024030}, this paper focuses on \emph{measures of functional dependence}. Such a measure takes values in the interval $[0,1]$, attains its minimum value $0$ if and only if $X$ and $Y$ are independent, and equals $1$  if and only if  $Y$ is almost surely a measurable function of $X$. These properties provide a natural and interpretable scale for quantifying the extent to which one variable determines another, without imposing a parametric model on their relationship.

A particularly influential development in this direction is the introduction of Chatterjee's rank correlation coefficient and its graph-based extension \citep{chatterjee2020new,azadkia2019simple}, which estimate a measure of functional dependence previously proposed by \cite{MR3024030}. These coefficients have several appealing features: they are based on simple rank and nearest-neighbor (NN) statistics, can be made invariant under strictly monotone transformations through suitable rank transformations \citep{tran2024rank}, and characterize both independence and perfect functional dependence. Moreover, their construction requires neither tuning parameters nor preliminary nonparametric estimation. These developments have stimulated considerable interest in rank- and graph-based measures of functional dependence \citep{lin2022limit,shi2020power,shi2021ac,kroll2024asymptotic,lin2024failure,Dette_Kroll_2025,gao2026limit,chhaibi2026martingale,hormann2026azadkia}, as well as in broader classes of dependence measures sharing similar properties \citep{wiesel2022measuring,strothmann2024rearranged,yang2025coverage,ansari2025ordering,azadkia2025new,ansari2026quantifying,ansari2026dependence}.

Despite their conceptual simplicity and attractive population properties, statistical inference based on existing rank- and graph-based estimators remains challenging. A central difficulty is that these estimators can exhibit substantial bias under general dependence \citep{azadkia2019simple,azadkia2026bias}. In particular, the bias arising from local nearest-neighbor approximations may decay more slowly than the parametric rate, thereby dominating stochastic fluctuations and invalidating conventional confidence intervals. This difficulty becomes especially pronounced when the dimension of  $X$ exceeds three. Recent work by \cite{azadkia2026bias} investigates this issue for Chatterjee's graph-based correlation coefficient and develops a bias-correction procedure that restores root-$n$ consistency and asymptotic normality under suitable conditions. Nevertheless, such corrections are closely tied to the structure of the underlying graph-based estimators, leaving open the question of whether a more systematic and broadly applicable estimation strategy is available.

Beyond bias, another important consideration is statistical efficiency. Even when a dependence estimator is root-$n$ consistent and asymptotically normal, its asymptotic variance need not be optimal \citep{bickel1993efficient,abadie2006large}. From a semiparametric perspective, the natural benchmark is the efficiency bound determined by the efficient influence function (EIF) of the population dependence measure. This perspective is particularly relevant because measures of functional dependence are often nonlinear functionals of conditional distribution functions, which must themselves be estimated from the data. For the  Dette--Siburg--Stoimenov dependence measure, \cite{klein2024asymptotic} recently characterized its EIF. However, translating such efficiency calculations into practically implementable estimators with rigorous theoretical guarantees remains challenging, particularly when conditional distributions are estimated using flexible nonparametric or machine-learning methods. More generally, a unified framework for efficient estimation across different measures of functional dependence remains largely unavailable.

In this paper, we develop such a framework based on sample-splitting, cross-fitting, and one-step estimation. Our approach draws on the double/debiased machine learning methodology of \cite{chernozhukov2018double} and the broader literature on cross-fitted semiparametric estimation \citep{klaassen1987consistent,newey2018cross}. Rather than estimating dependence measures directly through ranks or nearest-neighbor graphs, we exploit their representations as functionals of conditional distribution functions. Specifically, we first estimate the conditional distributions using observations outside each validation fold, and then construct an EIF-based one-step correction evaluated on the held-out observations. Aggregating the resulting estimators across folds yields a flexible procedure that eliminates the leading bias arising from nuisance estimation while retaining the full sample for inference. Under suitable regularity conditions, the proposed estimators are asymptotically linear and attain the semiparametric efficiency bound.

To further highlight the efficiency gains, we compare the semiparametric efficiency bound characterized by \cite{klein2024asymptotic} (see also Section~\ref{sec:conditional_distribution} below) with the explicit asymptotic variances of Chatterjee's rank- and graph-based correlation estimators. The relevant limiting variances were derived by \cite{chhaibi2026martingale} and \cite{gao2026limit}, respectively. We show that these existing estimators generally fail to attain the semiparametric efficiency bound, even in settings where their asymptotic bias is negligible. Thus, in addition to addressing the bias problem, our framework offers a principled route to improving estimation efficiency over existing rank- and graph-based methods.

\paragraph*{Paper organization.} Section~\ref{sec:review} briefly reviews semiparametric efficiency theory and cross-fitted one-step estimation, with the Sobol' index serving as an illustration. Section~\ref{sec:classical} discusses classical dependence functionals formulated as U-statistics, while Section~\ref{sec:conditional_distribution} introduces regression-based measures of functional dependence and derives their EIFs. Section~\ref{sec:estimation} analyzes existing estimators and develops the proposed cross-fitted procedures. Section~\ref{sec:discussion} concludes with further discussion, and Section~\ref{sec:proofs} collects the proofs.

\section{A review of semiparametric efficiency theory}
\label{sec:review}

This section provides a gentle, self-contained review of the
semiparametric efficiency theory used throughout the paper.
For a measurable function $f$, we write
\[
    \P f=\int f\,\d\P
    \qquad\text{and}\qquad
    L_0^2(\P)
    :=
    \left\{
        f:\; \P f=0,\quad \P f^2<\infty
    \right\},
\]
where $L_0^2(\P)$ is equipped with the inner product
$\langle f,g\rangle_\P=\P(fg)$.

\subsection{Basic setup}

Let $Z$ denote a generic observation taking values in a measurable
space $(\mathcal Z,\mathcal A)$, with distribution $\P$ belonging
to a statistical model $\mathcal P$. Consider a smooth
\emph{one-dimensional parametric submodel}
\[
    \{\P_t:t\in(-\delta,\delta)\}
    \subset\mathcal P
\]
passing through $\P=\P_0$.
If the submodel is differentiable in quadratic mean at $t=0$,
it admits a score $g\in L_0^2(\P)$.
When the distributions $\P_t$ admit sufficiently smooth
densities $p_t$ with respect to a common dominating measure,
the score has the familiar representation
\[
    g(Z)
    =
    \left.
    \frac{\d}{\d t}\log p_t(Z)
    \right|_{t=0}.
\]
The collection of all scores arising from regular (i.e., differentiable in quadratic mean at $t=0$) one-dimensional parametric submodels through $\P$ is called the \emph{tangent set} at $\P$,
denoted by
\[
    \dot{\mathcal P}_\P
    \subset L_0^2(\P).
\]
The corresponding \emph{tangent space}, denoted by
$\mathcal T_\P$, is the closure in $L_0^2(\P)$ of the
linear span of $\dot{\mathcal P}_\P$.

Throughout the paper, we consider a scalar parameter
\[
    \psi:\mathcal P\to\R
\]
as the target of inference.

\begin{definition}[Differentiability at $\P$]
\label{def:pathwise_diff}
The parameter $\psi$ is said to be \emph{differentiable at $\P$
relative to the tangent set $\dot{\mathcal P}_\P$},
or simply \emph{differentiable at $\P$}, if there exists
a bounded linear functional
\[
    \dot\psi_\P:\mathcal T_\P\to\R
\]
such that, for every regular one-dimensional submodel
$\{\P_t\}$ through $\P$ with score
$g\in\dot{\mathcal P}_\P$,
\[
    \left.
    \frac{\d}{\d t}\psi(\P_t)
    \right|_{t=0}
    =
    \dot\psi_\P(g).
\]
The map $\dot\psi_\P$ is called the
\emph{pathwise derivative} of $\psi$ at $\P$.
\end{definition}

The Riesz representation theorem provides a convenient
Hilbert-space characterization of the pathwise derivative.

\begin{definition}[IF]
\label{def:if}
Suppose that $\psi:\mathcal P\to\R$ is differentiable
at $\P$. A function $\phi_\P\in L_0^2(\P)$ is
called an \emph{influence function (IF)} of $\psi$ at $\P$ if
\begin{equation}
    \dot\psi_\P(g)
    =
    \P(\phi_\P g)
    =
    \langle\phi_\P,g\rangle_\P,
    \qquad
    \text{for every }g\in\dot{\mathcal P}_\P.
    \label{2.2}
\end{equation}
\end{definition}

By definition, every IF is square-integrable and centered,
so that $\P\phi_\P=0$. However, an IF need not be unique.
Indeed, suppose that $\phi_\P$ satisfies~\eqref{2.2}.
For any $h\in\mathcal T_\P^\perp$, we have
\[
    \P\{(\phi_\P+h)g\}
    =
    \P(\phi_\P g),
    \qquad
    \text{for every }g\in\mathcal T_\P.
\]
Thus, $\phi_\P+h$ represents the same pathwise derivative.
More generally, any two IFs differ by an element of
$\mathcal T_\P^\perp$. This nonuniqueness leads naturally
to the notion of the EIF.

\begin{definition}[EIF]
\label{def:eif}
Suppose that $\psi:\mathcal P\to\R$ is differentiable
at $\P$. The \emph{efficient influence function (EIF)} of $\psi$ at $\P$,
denoted by $\widetilde\psi_\P$, is the unique element
of $\mathcal T_\P$ satisfying
\[
    \dot\psi_\P(g)
    =
    \P(\widetilde\psi_\P g)= \langle\widetilde\psi_\P ,g\rangle_\P,
    \qquad
    \text{for every }g\in\mathcal T_\P.
\]
\end{definition}

Existence and uniqueness of $\widetilde\psi_\P$ follow
directly from the Riesz representation theorem, since
$\dot\psi_\P$ is a bounded linear functional on the
Hilbert space $\mathcal T_\P$.
Equivalently, if $\phi_\P$ is any IF of $\psi$ at $\P$,
then
\[
    \widetilde\psi_\P
    =
    \Pi_{\mathcal T_\P}\phi_\P,
\]
where $\Pi_{\mathcal T_\P}$ denotes the orthogonal
projection in $L_0^2(\P)$ onto $\mathcal T_\P$.
Thus, the EIF is the unique IF belonging to the
tangent space, or equivalently, the orthogonal
projection of any IF onto that space.

A particularly important special case is the unrestricted
nonparametric model. Under standard richness conditions,
its tangent space satisfies
\[
    \mathcal T_\P=L_0^2(\P).
\]
Consequently, $\mathcal T_\P^\perp=\{0\}$, so the IF
representing the pathwise derivative is unique.
In particular, any $\phi_\P\in L_0^2(\P)$ satisfying
\[
    \dot\psi_\P(g)
    =
    \P(\phi_\P g),
    \qquad
    \text{for every }g\in L_0^2(\P),
\]
is necessarily the EIF
\citep[Theorem 2.1]{newey1994asymptotic}.

\subsection{Semiparametric efficiency via the EIF}

We next introduce regular estimators and the corresponding notion of
\emph{semiparametric efficiency}. Throughout the paper, we
consider the usual independent and identically distributed (i.i.d.) sampling scheme, with
$Z_1,\ldots,Z_n$ independently drawn from $\P$.

\begin{definition}[Regular estimator]
\label{def:regular}
An estimator $T_n=T_n(Z_1,\ldots,Z_n)$ is said to be
\emph{regular at $\P$} if there exists a probability distribution
$\mathrm L_\P$, independent of the choice of submodel and of
$t\in\R$, such that, for every regular one-dimensional submodel
$\{\P_t\}$ through $\P$ and every fixed $t\in\R$,
\[
    \sqrt n
    \left\{
        T_n-\psi(\P_{t/\sqrt n})
    \right\}
    \Rightarrow
    \mathrm L_\P
    \qquad
    \text{under }\P_{t/\sqrt n}^{\otimes n},
\]
where the symbol    $\Rightarrow$ denotes weak convergence and
$\P_{t/\sqrt n}^{\otimes n}$ is the $n$-fold product
measure of $\P_{t/\sqrt n}$.
\end{definition}

Thus, regularity requires the first-order limiting distribution
of an estimator, centered at the corresponding parameter value,
to remain unchanged under root-$n$ local perturbations of the
underlying distribution. The EIF introduced in
Definition~\ref{def:eif} determines the minimum asymptotic
variance attainable by regular estimators. This is formalized
by the following convolution theorem.

\begin{theorem}[H\'ajek convolution theorem;
Theorem 18.3 of \cite{kosorok2008introduction}]
\label{thm:convolution}
Suppose that $\psi:\mathcal P\to\R$ is differentiable at
$\P$ with EIF $\widetilde\psi_\P$, and let $T_n$ be
regular at $\P$ relative to $\dot{\mathcal P}_\P$.
Then
\[
    \sqrt n\{T_n-\psi(\P)\}
    \Rightarrow
    \cN\left(0,\P\widetilde\psi_\P^2\right)
    *\mathcal L(M),
\]
where ``$*$'' stands for convolution of distributions and
$\mathcal L(M)$ is the distribution of some random variable $M$.
\end{theorem}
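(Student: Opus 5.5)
The plan is the classical route through Le Cam's third lemma combined with a tightness and subsequence argument. One caveat first: as literally written, the statement omits the hypotheses usually present. These are that $\sqrt n\{T_n-\psi(\P)\}$ converges to some law $\mathrm L_\P$ (which is implicit in regularity at $t=0$), and that the tangent set is a convex cone, or at least that finitely many scores can be combined linearly. I will assume both, as in Theorem 18.3 of \cite{kosorok2008introduction}. I also read the conclusion as saying that the limit $\mathrm L_\P$ is the convolution.

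The first step is to reduce to finite-dimensional submodels. Fix scores $g_1,\ldots,g_k\in\dot{\mathcal P}_\P$ and consider submodels with scores $a^\top g$, $a\in\R^k$. By differentiability in quadratic mean, the LAN expansion
\[
\log\prod_{i=1}^n \frac{\d\P_{a/\sqrt n}}{\d\P}(Z_i)= a^\top \mathbb G_n g-\tfrac12 a^\top \P(gg^\top)a+o_p(1),
\]
with $\mathbb G_n g:=n^{-1/2}\sum_{i=1}^n g(Z_i)$, holds under $\P^{\otimes n}$. This gives mutual contiguity. Next, the pair $\bigl(\sqrt n\{T_n-\psi(\P)\},\mathbb G_n g\bigr)$ is jointly tight, since each component is tight. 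Along any subsequence I extract a joint weak limit $(V,\Delta)$ with $\Delta\sim\cN(0,\Sigma)$, where $\Sigma=\P(gg^\top)$.

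The second step is to turn regularity into a characteristic-function identity. Le Cam's third lemma identifies the limit law of $\sqrt n\{T_n-\psi(\P)\}$ under $\P_{a/\sqrt n}$ as the law of $V$ tilted by $e^{a^\top\Delta-a^\top\Sigma a/2}$. Pathwise differentiability gives $\sqrt n\{\psi(\P_{a/\sqrt n})-\psi(\P)\}\to a^\top \P(\widetilde\psi_\P g)$. Regularity then yields, for all $u$ and $a$,
\[
\E\bigl[e^{iu(V-a^\top c)}e^{a^\top\Delta-\frac12a^\top\Sigma a}\bigr]=\E e^{iuV},\qquad c:=\P(\widetilde\psi_\P g).
\]
Both sides are analytic in $a$, so I extend to $a=-iu\Sigma^{-1}c$; I first reduce the $g_j$ to be linearly independent so that $\Sigma$ is invertible. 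With $W:=c^\top\Sigma^{-1}\Delta\sim\cN(0,c^\top\Sigma^{-1}c)$, this gives $\E e^{iu(V-W)}e^{iuW}=\E e^{iuV}$. After algebra, this shows that $V-W$ is independent of $\Delta$ and that $V\overset{d}{=}W+(V-W)$. Hence $\mathrm L_\P=\cN(0,c^\top\Sigma^{-1}c)*\mathcal L(M_k)$. The quantity $c^\top\Sigma^{-1}c$ is the squared norm of the projection of $\widetilde\psi_\P$ onto $\mathrm{span}(g_1,\ldots,g_k)$.

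The third step, which I expect to be the main obstacle, is passing from finite spans to the full tangent space. Since $\widetilde\psi_\P\in\mathcal T_\P$, I choose spans whose projections satisfy $c_k^\top\Sigma_k^{-1}c_k\uparrow\P\widetilde\psi_\P^2$. Each step shows that $\mathrm L_\P$ has the Gaussian factor $\cN(0,\sigma_k^2)$ with $\sigma_k^2\to\P\widetilde\psi_\P^2$. The noise laws $\mathcal L(M_k)$ are tight because $\mathrm L_\P$ is fixed and the Gaussian parts are tight, which follows from a standard convolution tightness lemma. Along a subsequence, $\mathcal L(M_k)\Rightarrow\mathcal L(M)$. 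Taking limits in $\widehat{\mathrm L}_\P(u)=e^{-\sigma_k^2u^2/2}\widehat{\mathcal L(M_k)}(u)$ then gives the claimed representation. The care needed is in justifying the analytic continuation in the second step, which requires uniform integrability of the likelihood-ratio factor. I will handle this by first proving the identity for real $a$ and then invoking uniqueness of analytic extension of $a\mapsto \E[e^{iuV}e^{a^\top\Delta}]$, which is entire because $\Delta$ is Gaussian and $e^{iuV}$ is bounded.
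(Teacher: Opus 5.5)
Your proposal is correct and follows the standard argument behind the cited result: LAN along finite spans of scores, Le Cam's third lemma, analytic continuation of the characteristic-function identity in the Gaussian limit, and a limit over increasing spans. The paper gives no proof of its own beyond citing Theorem~18.3 of \cite{kosorok2008introduction}, and the linear-closure hypothesis you make explicit is exactly what this argument needs (no uniform integrability is required, since the continuation happens in the Gaussian limit). The only slip is cosmetic: the choice $a=-iu\Sigma^{-1}c$ gives $\E e^{iuV}=\E\bigl[e^{iu(V-W)}\bigr]\,e^{-u^2c^\top\Sigma^{-1}c/2}$, which already yields the convolution, whereas independence of $V-W$ from $\Delta$ would need the general choice $a=iv-iu\Sigma^{-1}c$ with $v\in\R^k$.
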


Theorem~\ref{thm:convolution} shows that the limiting
distribution of any regular estimator is the convolution
of a centered Gaussian distribution with variance
$\P\widetilde\psi_\P^2$ and an additional noise distribution.
Consequently, $\P\widetilde\psi_\P^2$ represents the lower bound on the asymptotic variance of any regular estimator of $\psi(P)$ and  is called the \emph{semiparametric efficiency bound} for
estimating $\psi(\P)$. In particular, whenever the limiting
variance exists, it cannot be smaller than this bound.

\begin{definition}[Efficient estimator]
\label{def:efficient}
An estimator $T_n$ is said to be
\emph{efficient at $\P$} if it is regular at $\P$ and further satisfies 
\[
    \sqrt n\{T_n-\psi(\P)\}
    \Rightarrow
    \cN\left(0,\P\widetilde\psi_\P^2\right).
\]
\end{definition}

An important characterization of efficiency is provided by
the asymptotic linear representation of an estimator, which provides a
convenient route to establishing efficiency.

\begin{theorem}[Characterization of efficient estimators;
Lemma 25.23 of \cite{MR1652247}]
\label{thm:efficiency_characterization}
Suppose that $\psi:\mathcal P\to\R$ is differentiable
at $\P$ with EIF $\widetilde\psi_\P$.
Then the following statements are equivalent:
\begin{enumerate}[itemsep=-.5ex,label=(\roman*)]
    \item $T_n$ is efficient at $\P$ relative to
    $\dot{\mathcal P}_\P$;

    \item $T_n$ admits the asymptotic linear representation
    \[
        T_n-\psi(\P)
        =
        \frac1n\sum_{i=1}^n
        \widetilde\psi_\P(Z_i)
        +o_\P(n^{-1/2}).
    \]
\end{enumerate}
In particular, either condition implies that $T_n$ is regular at $\P$ relative to $\dot{\mathcal P}_\P$.
\end{theorem}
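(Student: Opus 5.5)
The plan is to prove the two implications separately, using Le Cam's third lemma for (ii)$\Rightarrow$(i) and the convolution theorem together with a Hilbert-space argument for (i)$\Rightarrow$(ii).

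\textbf{(ii)$\Rightarrow$(i).} Write $\mathbb G_n = n^{-1/2}\sum_{i=1}^n \widetilde\psi_\P(Z_i)$, so that (ii) gives $\sqrt n\{T_n-\psi(\P)\}=\mathbb G_n+o_\P(1)$.
\begin{enumerate}[label=(\alph*)]
    \item Fix a regular submodel with score $g\in\dot{\mathcal P}_\P$. Local asymptotic normality gives
    \[
        \log\frac{\d\P_{t/\sqrt n}^{\otimes n}}{\d\P^{\otimes n}}
        = \frac{t}{\sqrt n}\sum_{i=1}^n g(Z_i)-\frac{t^2}{2}\P g^2+o_\P(1).
    \]
    \item By the multivariate central limit theorem, $\mathbb G_n$ and the log-likelihood ratio are jointly asymptotically normal under $\P$. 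The limiting covariance is $t\,\P(\widetilde\psi_\P g)=t\,\dot\psi_\P(g)$.
    \item Le Cam's third lemma then gives $\mathbb G_n\Rightarrow\cN(t\dot\psi_\P(g),\P\widetilde\psi_\P^2)$ under $\P_{t/\sqrt n}^{\otimes n}$. Contiguity ensures the $o_\P(1)$ remainder stays negligible under the alternatives.
    \item Differentiability gives $\sqrt n\{\psi(\P_{t/\sqrt n})-\psi(\P)\}\to t\dot\psi_\P(g)$. Subtracting this yields regularity with limit $\cN(0,\P\widetilde\psi_\P^2)$, which is free of $g$ and $t$.
\end{enumerate}
Hence $T_n$ is regular with the efficient limit, i.e., efficient. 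This also proves the final sentence of the theorem.

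\textbf{(i)$\Rightarrow$(ii).} Set $R_n=\sqrt n\{T_n-\psi(\P)\}-\mathbb G_n$; we must show $R_n\to0$ in probability.
\begin{enumerate}[label=(\alph*)]
    \item By Prohorov, pass to a subsequence along which the triple $(\sqrt n\{T_n-\psi(\P)\},\,\mathbb G_n,\,n^{-1/2}\sum_i g(Z_i))$ converges jointly to $(S,G,\Delta_g)$, for a fixed $g$ in the tangent set. Here $(G,\Delta_g)$ is jointly Gaussian.
    \item Use the structure behind Theorem~\ref{thm:convolution}. Regularity plus the third lemma (in its general, non-Gaussian form) give, for all $t$,
    \[
        \E\bigl[e^{iuS}e^{t\Delta_g-t^2\P g^2/2}\bigr]
        = e^{iut\dot\psi_\P(g)}\,\E\bigl[e^{iuS}\bigr].
    \]
    Taking $g$ to range over a dense subset of $\mathcal T_\P$ and exploiting analyticity in $t$, one identifies $S \stackrel{d}{=} G+M$ with $M$ independent of $G$. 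This is the standard proof of the convolution theorem in \cite{MR1652247}, Theorem 25.20, and I would reproduce rather than re-derive it.
    \item Efficiency says $S\sim\cN(0,\P\widetilde\psi_\P^2)$. Comparing characteristic functions gives $M\equiv0$ a.s., so $S=G$ in the joint limit, i.e., $R_n\Rightarrow0$.
    \item Since every subsequence has a further subsequence with this limit, $R_n\to0$ in probability.
\end{enumerate}

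The main obstacle is the second step of (i)$\Rightarrow$(ii): showing the noise $M$ is independent of the \emph{particular} Gaussian $G$ built from $\widetilde\psi_\P$. It does not suffice to know only the marginal law of $S$. This requires $\widetilde\psi_\P$ to lie in $\mathcal T_\P$ (Definition~\ref{def:eif}), so that it can be approximated by scores $g$. The approximation must also be uniform enough to pass limits through the characteristic-function identity. I would handle this by approximating $\widetilde\psi_\P$ in $L^2(\P)$ by finite linear combinations of tangent-set elements, using the convexity and linearity conditions implicit in the setup.
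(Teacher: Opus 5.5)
Your proposal is correct and is essentially the standard argument behind the cited Lemma~25.23 of van der Vaart; the paper gives only the citation. The argument is LAN plus Le Cam's third lemma for (ii)$\Rightarrow$(i), and, for (i)$\Rightarrow$(ii), the convolution-theorem structure showing that $S-G$ is independent of $G$ in the joint subsequential limit, which efficiency then forces to vanish. One caveat: the convex-cone or linearity structure of the tangent set that you invoke is not among the paper's stated hypotheses, but in the efficient case you do not need it, because differentiating your characteristic-function identity at $t=0$ and then at $u=0$ already gives $\E[S\Delta_g]=\dot\psi_\P(g)=\P(\widetilde\psi_\P g)$ for each single $g\in\dot{\mathcal P}_\P$, which extends by linearity and $L^2$-density of the span to $\mathrm{Cov}(S,G)=\P\widetilde\psi_\P^2$ and hence $\E[(S-G)^2]=0$.
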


\subsection{A general strategy toward efficiency}\label{sec:general-method}

We now introduce the classical one-step construction
\citep[Chapter 7]{bickel1993efficient} for obtaining efficient estimators.
In the following, consider $\QQ_n$ to be an initial estimator of $\P$, and define the corresponding
one-step estimator of $\psi(\P)$ by
\begin{align}
    \widehat\psi_n^*
    :=
    \psi(\QQ_n)
    +
    \PP_n\widetilde\psi_{\QQ_n},
    \label{eq:one-step}
\end{align}
where $\PP_n$ denotes the empirical measure of
$Z_1,\ldots,Z_n$.

To understand the role of the one-step correction, define
\[
    \Rem(\QQ_n,\P)
    :=
    \psi(\QQ_n)-\psi(\P)
    +
    \P\widetilde\psi_{\QQ_n}.
\]
Then~\eqref{eq:one-step} admits the exact decomposition
\begin{align}
    \widehat\psi_n^*
    -
    \psi(\P)
    &=
    (\PP_n-\P)\widetilde\psi_\P
+
    (\PP_n-\P)
    \left(
        \widetilde\psi_{\QQ_n}
        -
        \widetilde\psi_\P
    \right)
    +
    \Rem(\QQ_n,\P).
    \label{eq:one-step-decomp}
\end{align}
The first term in~\eqref{eq:one-step-decomp} is the desired first-order
term. The second is an \emph{empirical-process} remainder, while the third
measures the bias remaining after the one-step correction.

Now suppose that
\begin{align}
    \left\|
        \widetilde\psi_{\QQ_n}
        -
        \widetilde\psi_\P
    \right\|_{L^2(\P)}
    =
    o_\P(1),
    \qquad
    \Rem(\QQ_n,\P)
    =
    o_\P(n^{-1/2}).
\end{align}
If, in addition, $\widetilde\psi_{\QQ_n}$ belongs with probability
tending to one to a suitable $\P$-Donsker class containing
$\widetilde\psi_\P$, then asymptotic equicontinuity yields
\[
    (\PP_n-\P)
    \left(
        \widetilde\psi_{\QQ_n}
        -
        \widetilde\psi_\P
    \right)
    =
    o_\P(n^{-1/2}).
\]
It follows that
\begin{align}
    \widehat\psi_n^*
    -
    \psi(\P)
    =
    \frac{1}{n}
    \sum_{i=1}^n
    \widetilde\psi_\P(Z_i)
    +
    o_\P(n^{-1/2}).
\end{align}
By Theorem~\ref{thm:efficiency_characterization},
$\widehat\psi_n^*$ is then regular and semiparametrically efficient at
$\P$.

In modern applications, however, the initial estimator $\QQ_n$ often
involves flexible nonparametric or machine-learning methods, for which
verifying Donsker-type complexity conditions can be restrictive. Such
conditions can be avoided through sample-splitting and cross-fitting
\citep{klaassen1987consistent,newey2018cross,chernozhukov2018double}. Let
$\{\mathcal I_1,\ldots,\mathcal I_K\}$ with $K\geq 2$ be a deterministic partition of
$\{1,\ldots,n\}$, and write
\[
    n_k:=|\mathcal I_k|.
\]
For each $k$, let $\QQ_k$ be constructed using only observations with
indices outside $\mathcal I_k$, and define the empirical measure on the
$k$th validation fold by
\[
    \PP_{n,k}
    :=
    \frac{1}{n_k}
    \sum_{i\in\mathcal I_k}
    \delta_{Z_i}
\]
with $\delta_z$ representing the Dirac measure at $z$. The cross-fitted one-step estimator is defined by
\begin{align}
    \widehat\psi_n^{**}
    &:=
    \sum_{k=1}^K
    \frac{n_k}{n}
    \left\{
        \psi(\QQ_k)
        +
        \PP_{n,k}\widetilde\psi_{\QQ_k}
    \right\}
=
    \frac{1}{n}
    \sum_{k=1}^K
    \sum_{i\in\mathcal I_k}
    \left\{
        \psi(\QQ_k)
        +
        \widetilde\psi_{\QQ_k}(Z_i)
    \right\}.
\end{align}

Cross-fitting removes the need for Donsker conditions because
$\widetilde\psi_{\QQ_k}$ is constructed independently of the observations
in the validation fold $\mathcal I_k$. The following result gives a
convenient sufficient condition for semiparametric efficiency.

\begin{theorem}[Efficiency of cross-fitted one-step estimators]
\label{thm:cf_onestep_efficiency}
Suppose that $\psi:\mathcal P\to\R$ is differentiable at $\P$ with EIF
$\widetilde\psi_\P\in L_0^2(\P)$. Assume that the number of folds $K$ is
fixed and that, for some constant $c>0$,
\[
    \min_{1\leq k\leq K}
    \frac{n_k}{n}
    \geq c
\]
for all sufficiently large $n$. Suppose further that, uniformly over
$k=1,\ldots,K$,
\[
    \left\|
        \widetilde\psi_{\QQ_k}
        -
        \widetilde\psi_\P
    \right\|_{L^2(\P)}
    =
    o_\P(1)~~~{\rm and}~~~
    \Rem(\QQ_k,\P)
    =
    o_\P(n^{-1/2}).
\]
Then
\begin{align}
    \widehat\psi_n^{**}
    -
    \psi(\P)
    =
    \frac{1}{n}
    \sum_{i=1}^n
    \widetilde\psi_\P(Z_i)
    +
    o_\P(n^{-1/2}),
    \label{eq:cf-one-step-al}
\end{align}
and consequently $\widehat\psi_n^{**}$ is regular and semiparametrically efficient at
$\P$.
\end{theorem}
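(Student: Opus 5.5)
The plan is to apply the exact one-step decomposition~\eqref{eq:one-step-decomp} fold by fold and then aggregate. For each $k$, by definition of $\Rem(\QQ_k,\P)$,
\[
    \psi(\QQ_k)+\PP_{n,k}\widetilde\psi_{\QQ_k}-\psi(\P)
    =
    (\PP_{n,k}-\P)\widetilde\psi_\P
    +
    (\PP_{n,k}-\P)\bigl(\widetilde\psi_{\QQ_k}-\widetilde\psi_\P\bigr)
    +
    \Rem(\QQ_k,\P),
\]
where we used $\P\widetilde\psi_\P=0$. Multiplying by $n_k/n$ and summing over $k$, the first terms combine exactly to $\frac1n\sum_{i=1}^n\widetilde\psi_\P(Z_i)$, because $\{\mathcal I_k\}$ partitions $\{1,\ldots,n\}$ and $\sum_k n_k/n=1$. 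The remainder terms contribute $\sum_k (n_k/n)\Rem(\QQ_k,\P)=o_\P(n^{-1/2})$, since $K$ is fixed, the weights are at most one, and each $\Rem(\QQ_k,\P)=o_\P(n^{-1/2})$.

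The main step, and the only nontrivial one, is to show that the empirical-process term $(\PP_{n,k}-\P)(\widetilde\psi_{\QQ_k}-\widetilde\psi_\P)$ is $o_\P(n^{-1/2})$ for each $k$ without Donsker conditions. I would condition on the training data $\mathcal D_{-k}:=\{Z_i:i\notin\mathcal I_k\}$. Given $\mathcal D_{-k}$, the function $f_k:=\widetilde\psi_{\QQ_k}-\widetilde\psi_\P$ is fixed, and the $Z_i$ with $i\in\mathcal I_k$ are still i.i.d.\ from $\P$ by independence. Then
\[
    \E\bigl[(\PP_{n,k}-\P)f_k\mid\mathcal D_{-k}\bigr]=0,
    \qquad
    \E\bigl[\{(\PP_{n,k}-\P)f_k\}^2\mid\mathcal D_{-k}\bigr]
    \le \frac{\|f_k\|_{L^2(\P)}^2}{n_k}
    \le \frac{\|f_k\|_{L^2(\P)}^2}{cn}.
\]
Here $\|f_k\|_{L^2(\P)}$ is understood as $\bigl(\int f_k^2\,\d\P\bigr)^{1/2}$ with $\QQ_k$ held fixed; I implicitly need $\widetilde\psi_{\QQ_k}\in L^2(\P)$, which the hypothesis provides with probability tending to one. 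By conditional Chebyshev, for any $\epsilon>0$,
\[
    \P\bigl(\sqrt n\,|(\PP_{n,k}-\P)f_k|>\epsilon\mid\mathcal D_{-k}\bigr)
    \le \min\Bigl\{1,\ \|f_k\|_{L^2(\P)}^2/(c\epsilon^2)\Bigr\}.
\]
The right-hand side tends to zero in probability because $\|f_k\|_{L^2(\P)}=o_\P(1)$. Since it is bounded by one, dominated convergence lets me take expectations and conclude that the unconditional probability tends to zero. This is the standard argument of \cite{chernozhukov2018double}, and the point needing care is exactly this passage from a conditional bound to an unconditional one. Summing over the fixed $K$ folds with weights $n_k/n\le1$ gives $o_\P(n^{-1/2})$.

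Combining the three pieces yields the asymptotic linear representation~\eqref{eq:cf-one-step-al}. Regularity and semiparametric efficiency then follow immediately from Theorem~\ref{thm:efficiency_characterization}, via implication (ii)$\Rightarrow$(i).
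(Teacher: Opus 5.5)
Your proposal is correct and follows essentially the same route as the paper. You use the same foldwise decomposition into $(\PP_{n,k}-\P)\widetilde\psi_\P$, the empirical-process term and $\Rem(\QQ_k,\P)$, then condition on the training data, bound the conditional variance by $\|\widetilde\psi_{\QQ_k}-\widetilde\psi_\P\|_{L^2(\P)}^2/n_k$, apply conditional Chebyshev, and finish with Theorem~\ref{thm:efficiency_characterization}; your explicit bounded-convergence step for passing from the conditional to the unconditional probability is a useful detail that the paper leaves implicit.
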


\begin{remark}\label{remark:general-dml}
In applications, we may adopt a more flexible version of the
preceding construction, allowing the estimated EIF
$\widetilde\psi_{\QQ_k}$ to depend on certain quantities computed
from the validation sample $\PP_{n,k}$. As illustrated below,
the key requirement is typically that the preliminary
nonparametric estimators of the infinite-dimensional nuisance
parameters be constructed independently of the validation
sample. Other components, such as empirical marginal
distributions or moments, may be estimated using the validation
sample itself, provided that their additional stochastic
contributions are properly accounted for in the asymptotic
analysis. Thus, the independence requirement underlying
cross-fitting need not extend to every component of the
estimated EIF.  Theorem \ref{thm:cf_onestep_efficiency} does not directly apply to such constructions, and their asymptotic linearity must be verified separately, as illustrated in Theorem \ref{thm:sobol-efficiency} below.
\end{remark}

\subsection{Example: EIF-based estimation of the Sobol' index}
\label{sec:sobol-efficient-estimation}

We illustrate the general strategy of Section~\ref{sec:general-method}
using the Sobol' index
\citep{doksum1995nonparametric,da2013efficient,klein2024asymptotic,da2026sensitivity}.
Let $Z=(X,Y)\sim\P$, where $X\in\cX\subset\R^d$ and $Y\in\R$, and write
\[
    m(x):=\E[Y\mid X=x],\qquad
    \mu:=\E[Y],\qquad
    v:=\Var(Y)>0.
\]
The Sobol' index measures the proportion of the variance of $Y$
explained by $X$ and is defined by
\[
    \rho_{\mathrm{Sob}}(\P)
    :=\frac{\Var\{\E[Y\mid X]\}}{\Var(Y)}
    =\frac{\E[m(X)^2]-\mu^2}{v}.
\]

We first derive a candidate EIF, following the argument in
\citet[Section~2]{doksum1995nonparametric}.
To this end, first fix a bounded score $s\in L_0^2(\P)$ and consider the submodel
\[
    \d\P_\epsilon(z)=\{1+\epsilon s(z)\}\,\d\P(z),
\]
where $|\epsilon|$ is sufficiently small that
$1+\epsilon s(z)>0$ almost surely. Let
$m_\epsilon(x):=\E_\epsilon[Y\mid X=x]$ denote the conditional expectation of $Y$ given $X=x$ with respect to the distribution $\P_\epsilon$.
We begin by deriving the pathwise derivative of $m(x)$.

For any bounded measurable function $h:\cX\to\R$,
the change-of-measure identity and the tower property give
\begin{align*}
    \E_\epsilon[Yh(X)]
    &=\E[Yh(X)\{1+\epsilon s(Z)\}]=\E\!\left[h(X)\E[Y\{1+\epsilon s(Z)\}\mid X]\right].
\end{align*}
On the other hand, the defining property of the conditional mean yields
\begin{align*}
    \E_\epsilon[Yh(X)]
    &=\E_\epsilon[m_\epsilon(X)h(X)]=\E\!\left[
        h(X)m_\epsilon(X)\E[1+\epsilon s(Z)\mid X]
      \right].
\end{align*}
Since these equalities hold for every bounded measurable $h$,
we obtain, for $\P_X$-almost every $x$,
\[
    m_\epsilon(x)
    =\frac{\E[Y\{1+\epsilon s(Z)\}\mid X=x]}
           {\E[1+\epsilon s(Z)\mid X=x]}
    =\frac{m(x)+\epsilon\E[Ys(Z)\mid X=x]}
           {1+\epsilon\E[s(Z)\mid X=x]}.
\]
Subtracting $m(x)$ and rearranging gives the exact identity
\[
    m_\epsilon(x)-m(x)
    =\epsilon\,
      \frac{\E[\{Y-m(X)\}s(Z)\mid X=x]}
           {1+\epsilon\E[s(Z)\mid X=x]}.
\]
The denominator converges to one and is bounded away from zero
for sufficiently small $|\epsilon|$. Consequently,
\[
    \dot m(x)
    :=\left.\frac{\d}{\d\epsilon}m_\epsilon(x)\right|_{\epsilon=0}
    =\E[\{Y-m(X)\}s(Z)\mid X=x].
\]
We next differentiate the quadratic functional $\E[m(X)^2]$.
Because $m_\epsilon(X)^2$ depends only on $X$,
\begin{align*}
    \E_\epsilon[m_\epsilon(X)^2]
    &=\E[m_\epsilon(X)^2\{1+\epsilon s(Z)\}]=\E\!\left[
        m_\epsilon(X)^2\{1+\epsilon\E[s(Z)\mid X]\}
      \right].
\end{align*}
Inserting the preceding identity for $m_\epsilon-m$ and
expanding the square, we obtain
\begin{align*}
    \E_\epsilon[m_\epsilon(X)^2]
    ={}&\E[m(X)^2]+\epsilon\E\!\left[
        m(X)^2\E[s(Z)\mid X]
        +2m(X)\E[\{Y-m(X)\}s(Z)\mid X]
      \right]\\
    &+\epsilon^2\E\!\left[
       \frac{\E[\{Y-m(X)\}s(Z)\mid X]^2}
            {1+\epsilon\E[s(Z)\mid X]}
      \right]\\
    ={}&\E[m(X)^2]
       +\epsilon\E\!\left[
          \{m(X)^2+2m(X)(Y-m(X))\}s(Z)
        \right]
       +O(\epsilon^2).
\end{align*}
To justify the remainder, note that conditional Jensen's inequality
and the boundedness of $s$ give
\[
    \E\!\left[\E[\{Y-m(X)\}s(Z)\mid X]^2\right]
    \le \|s\|_\infty^2\E[\{Y-m(X)\}^2]<\infty,
\]
while the denominator is at least $1-|\epsilon|\|s\|_\infty$.
It follows that
\[
    \left.\frac{\d}{\d\epsilon}
    \E_\epsilon[m_\epsilon(X)^2]\right|_{\epsilon=0}
    =\E\!\left[
        \{2m(X)Y-m(X)^2\}s(Z)
      \right].
\]
It remains to differentiate the marginal mean and variance.
Writing $\mu_\epsilon:=\E_\epsilon[Y]$ and
$v_\epsilon:=\Var_\epsilon(Y)$, we have
\[
    \mu_\epsilon=\E[Y\{1+\epsilon s(Z)\}],
    \qquad
    v_\epsilon=\E_\epsilon[Y^2]-\mu_\epsilon^2.
\]
Since $\E[s(Z)]=0$, their derivatives are
\begin{align*}
    \dot\mu
    &:=\left.\frac{\d}{\d\epsilon}\mu_\epsilon\right|_{\epsilon=0}
      =\E[(Y-\mu)s(Z)],~~~
    \dot v
    &:=\left.\frac{\d}{\d\epsilon}v_\epsilon\right|_{\epsilon=0}
      =\E[Y^2s(Z)]-2\mu\dot\mu
      =\E[\{(Y-\mu)^2-v\}s(Z)].
\end{align*}
Combining these identities, the derivative of the numerator is
\begin{align*}
    \left.\frac{\d}{\d\epsilon}
    \{\E_\epsilon[m_\epsilon(X)^2]-\mu_\epsilon^2\}
    \right|_{\epsilon=0}&=\E\!\left[
       \{2m(X)Y-m(X)^2-2\mu(Y-\mu)\}s(Z)
      \right]\\
    &=\E\!\left[
       \big\{2\{m(X)-\mu\}\{Y-m(X)\}
              +\{m(X)-\mu\}^2\big\}s(Z)
      \right],
\end{align*}
where the last equality uses $\E[s(Z)]=0$.
Applying the quotient rule, and using
$\E[m(X)^2]-\mu^2=v\rho_{\mathrm{Sob}}(\P)$, now gives
\begin{align*}
    \left.\frac{\d}{\d\epsilon}
       \rho_{\mathrm{Sob}}(\P_\epsilon)
    \right|_{\epsilon=0}
    =\frac1v\E\!\Big[
       \big\{&2\{m(X)-\mu\}\{Y-m(X)\}
        +\{m(X)-\mu\}^2-\rho_{\mathrm{Sob}}(\P)(Y-\mu)^2\big\}s(Z)
      \Big].
\end{align*}
Thus, the candidate EIF is given by 
\[
    \begin{aligned}
    \widetilde\rho_{\mathrm{Sob},\P}(x,y)
    =\frac1v\Big[
       &2\{m(x)-\mu\}\{y-m(x)\}
        +\{m(x)-\mu\}^2-\rho_{\mathrm{Sob}}(\P)(y-\mu)^2
      \Big].
    \end{aligned}
\]
This function is centered: the first term has mean zero by
$\E[Y-m(X)\mid X]=0$, and the expectations of the remaining
two terms cancel because
$\E[\{m(X)-\mu\}^2]=v\rho_{\mathrm{Sob}}(\P)$. 
If $\E[Y^4]<\infty$, conditional Jensen's inequality ensures that the candidate belongs to $L_0^2(\P)$. Along parametric submodels for which the relevant moment functionals are differentiable and differentiation under the integral sign is justified, this extension represents the pathwise derivative. Consequently, the candidate is the EIF relative to such a class of regular submodels, provided that their scores span a dense subspace of $L_0^2(\P)$; see also \citet[Section~2]{doksum1995nonparametric}.

We now turn to estimation. Following the cross-fitting scheme in
Section~\ref{sec:general-method}, partition the observations into
$K$ validation folds $\mathcal I_1,\ldots,\mathcal I_K$,
with $n_k=|\mathcal I_k|$. For each $k$, construct an estimator
$\widehat m_k$ of $m$ using only observations outside
$\mathcal I_k$, and define
\[
    \widehat\mu_k:=\PP_{n,k}Y,\qquad
    \widehat\mu:=\PP_nY,\qquad
    \widehat v:=\PP_n(Y-\widehat\mu)^2.
\]
We use a pooled estimate of the marginal variance, while
retaining fold-specific estimates of the marginal mean in the
numerator. The resulting plug-in estimator is
\[
    \widehat\rho_{\mathrm{Sob}}^{\mathrm{plug}}
    :=\frac1{\widehat v}
       \sum_{k=1}^K\frac{n_k}{n}
       \left\{
          \PP_{n,k}\widehat m_k(X)^2-\widehat\mu_k^2
       \right\}.
\]
Motivated by the EIF derived above, define its empirical analogue
on fold $k$ by
\begin{align*}
    \widehat{\widetilde\rho}_{\mathrm{Sob},k}(x,y)
    :=\frac1{\widehat v}\Big[
       &\widehat m_k(x)^2
        -\PP_{n,k}\widehat m_k(X)^2
        +2\widehat m_k(x)\{y-\widehat m_k(x)\}-2\widehat\mu_k(y-\widehat\mu_k)
        -\widehat\rho_{\mathrm{Sob}}^{\mathrm{plug}}
          \{(y-\widehat\mu)^2-\widehat v\}
      \Big].
\end{align*}
Only the infinite-dimensional nuisance function $m$ is
estimated on an independent training sample; the empirical
moments in this expression may depend on the validation
sample. As the following algebra shows, this dependence
simplifies in the final estimator.

The cross-fitted one-step estimator is
\begin{align}
    \widehat\rho_{\mathrm{Sob},n}^{\mathrm{eff}}
    &:=\widehat\rho_{\mathrm{Sob}}^{\mathrm{plug}}
       +\sum_{k=1}^K\frac{n_k}{n}
         \PP_{n,k}\widehat{\widetilde\rho}_{\mathrm{Sob},k}
         =\frac1{\widehat v}
       \sum_{k=1}^K\frac{n_k}{n}
       \left[
         \PP_{n,k}\{2\widehat m_k(X)Y-\widehat m_k(X)^2\}
         -\widehat\mu_k^2
       \right],
    \label{eq:sobol-eff-final}
\end{align}
where the second equality uses the facts that
\begin{align*}
    \PP_{n,k}\{\widehat m_k(X)^2
        -\PP_{n,k}\widehat m_k(X)^2\}=0,~~
    \PP_{n,k}(Y-\widehat\mu_k)=0,~~
    \sum_{k=1}^K\frac{n_k}{n}\PP_{n,k}
       \{(Y-\widehat\mu)^2-\widehat v\}=0.
\end{align*}
The one-step correction yields a Neyman-orthogonal moment
condition with respect to the nuisance regression function
$m$ \citep[Section 2.2]{chernozhukov2018double}. Indeed, for any
square-integrable function $q$,
\begin{align*}
&\E[2q(X)Y-q(X)^2]-\E[m(X)^2]=
2\E[q(X)m(X)]-\E[q(X)^2]-\E[m(X)^2]=
-\|q-m\|_{L^2(\P_X)}^2.
\end{align*}
Thus, the population moment is insensitive to first-order
perturbations of $m$, and its bias is exactly quadratic
in the nuisance estimation error. This property underlies
the relatively mild convergence-rate requirement imposed
on the preliminary regression estimator. See also
\citet{doksum1995nonparametric} for an earlier development
of this one-step correction.

The following result gives sufficient conditions for the
cross-fitted estimator to achieve the EIF-based asymptotic
linear representation. In particular, sample-splitting permits
us to avoid Donsker conditions on the first-stage estimator.

\begin{theorem}
\label{thm:sobol-efficiency}
Suppose that $Z_1,\ldots,Z_n$ are i.i.d.\ from $\P$,
with $\E[Y^4]<\infty$ and $\Var(Y)>0$.
Let $\{\mathcal I_1,\ldots,\mathcal I_K\}$ be a partition
into a fixed number of folds satisfying $n_k\asymp n$.
For each $k$, suppose that $\widehat m_k$ is constructed
using only observations outside $\mathcal I_k$ and satisfies
\[
    \max_{1\le k\le K}
       \|\widehat m_k-m\|_{L^2(\P_X)}
       =o_\P(n^{-1/4}),
    \qquad
    \max_{1\le k\le K}
       \|\widehat m_k-m\|_{L^4(\P_X)}
       =o_\P(1).
\]
Then the estimator $\widehat\rho_{\mathrm{Sob},n}^{\mathrm{eff}}$
in~\eqref{eq:sobol-eff-final} satisfies
\[
    \widehat\rho_{\mathrm{Sob},n}^{\mathrm{eff}}
       -\rho_{\mathrm{Sob}}(\P)
    =\frac1n\sum_{i=1}^n
       \widetilde\rho_{\mathrm{Sob},\P}(X_i,Y_i)
       +o_\P(n^{-1/2}).
\]
\end{theorem}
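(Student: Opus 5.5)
We decompose the estimator in \eqref{eq:sobol-eff-final} into a numerator and the pooled variance, handle each separately, and combine them by the delta method. Write $\widehat N:=\sum_k\frac{n_k}{n}[\PP_{n,k}\{2\widehat m_k(X)Y-\widehat m_k(X)^2\}-\widehat\mu_k^2]$ and $N:=\E[m(X)^2]-\mu^2=v\rho_{\mathrm{Sob}}(\P)$, so that $\widehat\rho^{\mathrm{eff}}_{\mathrm{Sob},n}=\widehat N/\widehat v$.

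For the variance, $\E[Y^4]<\infty$ and the standard argument give
\[
\widehat v-v=\PP_n\{(Y-\mu)^2-v\}-(\widehat\mu-\mu)^2=\PP_n\{(Y-\mu)^2-v\}+O_\P(n^{-1}).
\]
For the numerator, introduce the oracle score $\varphi_q(x,y):=2q(x)y-q(x)^2$ and write, on each fold,
\[
\PP_{n,k}\varphi_{\widehat m_k}-\E[m(X)^2]=(\PP_{n,k}-\P)\varphi_m+(\PP_{n,k}-\P)(\varphi_{\widehat m_k}-\varphi_m)+\{\P\varphi_{\widehat m_k}-\E[m(X)^2]\}.
\]
Here $\P$ integrates only over the new observation, with $\widehat m_k$ held fixed.

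The third term equals $-\|\widehat m_k-m\|_{L^2(\P_X)}^2=o_\P(n^{-1/2})$, by the Neyman-orthogonality identity displayed before the theorem.

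The second term is the main obstacle, and it is handled by cross-fitting. Conditionally on the training data, it is a centered average of $n_k$ i.i.d.\ terms. By Chebyshev's inequality it is $O_\P(n_k^{-1/2}\|\varphi_{\widehat m_k}-\varphi_m\|_{L^2(\P)})$, and the conditional statement transfers to an unconditional $o_\P(n^{-1/2})$ because $\|\varphi_{\widehat m_k}-\varphi_m\|_{L^2(\P)}\to0$ in probability. Writing $\delta=\widehat m_k-m$, we have
\[
\varphi_{\widehat m_k}-\varphi_m=2\delta(X)\{Y-m(X)\}-\delta(X)^2.
\]
The piece $\delta^2$ has $L^2$ norm $\|\delta\|_{L^4}^2=o_\P(1)$, which is exactly where the $L^4$ condition enters. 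The piece $\delta(Y-m)$ is the delicate one, since we only know $Y-m(X)\in L^4$. Its norm satisfies
\[
\|\delta(Y-m)\|_{L^2}\le\|\delta\|_{L^4}\,\|Y-m(X)\|_{L^4}=o_\P(1)
\]
by Cauchy--Schwarz, using $\E[(Y-m(X))^4]\lesssim\E[Y^4]$ by conditional Jensen.

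For the mean terms, $\widehat\mu_k^2-\mu^2=2\mu(\PP_{n,k}-\P)Y+O_\P(n^{-1})$. Summing over folds with weights $n_k/n$ turns the fold-wise empirical processes into full-sample averages, since $\sum_k\frac{n_k}{n}\PP_{n,k}=\PP_n$. This yields
\[
\widehat N-N=\PP_n\{2m(X)Y-m(X)^2-\E[m(X)^2]-2\mu(Y-\mu)\}+o_\P(n^{-1/2}).
\]

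Finally, linearize the ratio:
\[
\widehat N/\widehat v-N/v=\{(\widehat N-N)-\rho_{\mathrm{Sob}}(\P)(\widehat v-v)\}/v+O_\P(n^{-1}),
\]
where the remainder is of second order because both errors are $O_\P(n^{-1/2})$. The summand of the resulting linear term is
\[
\tfrac1v\big[2m(x)y-m(x)^2-2\mu y+\mu^2-\rho_{\mathrm{Sob}}(\P)\{(y-\mu)^2-v\}\big]-\rho_{\mathrm{Sob}}(\P)+\rho_{\mathrm{Sob}}(\P)=\widetilde\rho_{\mathrm{Sob},\P}(x,y).
\]
To confirm the identification, note that the constants cancel using $N=v\rho_{\mathrm{Sob}}(\P)$, and that $2m y-m^2-2\mu y+\mu^2=2(m-\mu)(y-m)+(m-\mu)^2$. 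This matches the EIF exactly and completes the argument.
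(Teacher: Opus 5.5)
Your proof is correct and follows essentially the same route as the paper: the expansion $\varphi_{\widehat m_k}-\varphi_m=2\delta\{Y-m(X)\}-\delta^2$, the exactly quadratic bias $-\|\delta\|_{L^2(\P_X)}^2$, conditional Chebyshev with the $L^4$ bounds, $O_\P(n^{-1})$ expansions of the mean and variance terms, and linearization of $1/\widehat v$. Your fold-wise expansion of $\widehat\mu_k^2$ is a harmless variant of the paper's identity $\sum_k\frac{n_k}{n}\widehat\mu_k^2=\widehat\mu^2+\sum_k\frac{n_k}{n}(\widehat\mu_k-\widehat\mu)^2$.

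One cosmetic slip: in your final display the bracket already contains $+\rho_{\mathrm{Sob}}(\P)v$, so the trailing constant should be $-\rho_{\mathrm{Sob}}(\P)$ alone rather than $-\rho_{\mathrm{Sob}}(\P)+\rho_{\mathrm{Sob}}(\P)$. As written, the left side equals $\widetilde\rho_{\mathrm{Sob},\P}+\rho_{\mathrm{Sob}}(\P)$, although your remark that the constants cancel shows you have the correct centered summand.
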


\section{Classical measures of dependence}
\label{sec:classical}

Throughout the remainder of this paper, let
\[
    Z=(X,Y)\sim \P\in\cP,
\]
where $X\in\cX$ and $Y\in\cY$ are two random variables with marginal measures $\P_X$ and $\P_Y$, respectively. Broadly speaking,
a measure of dependence is a scalar-valued functional
\[
    \Xi:\mathcal P\to\R
\]
designed to quantify the strength of dependence between $X$ and $Y$.

We begin with a broad class of classical dependence
measures whose population versions can be represented as expectations of
\emph{symmetric kernels}; see, e.g., \cite{MR3737306}, \cite{MR4185806}, and \cite{shi2020power}. This class contains many classical rank-based and
kernel-based measures of dependence and provides a particularly simple
setting in which the EIF and the efficiency of their usual sample
analogues can be identified explicitly.

Specifically, let $h:\mathcal Z^m\to\R$ be a measurable symmetric kernel of fixed order
$m\geq1$. Consider the functional
\begin{align}\label{eq:u-md}
    \psi_h(\P)
    :=
    \E_\P\{h(Z_1,\ldots,Z_m)\}
=
    \int h(z_1,\ldots,z_m)
    \prod_{j=1}^m \d\P(z_j),
\end{align}
where $Z_1,\ldots,Z_m$ are independent copies of $Z$. Its natural sample
analogue is the U-statistic
\[
    U_n(h)
    :=
    {n\choose m}^{-1}
    \sum_{1\leq i_1<\cdots<i_m\leq n}
    h(Z_{i_1},\ldots,Z_{i_m}).
\]
Throughout this section, we work under the unrestricted nonparametric
model, so that
\[
    \mathcal T_\P=L_0^2(\P).
\]
For later use, if $a:\mathcal Z^m\to\R$ is not necessarily symmetric,
write
\[
    \operatorname{Sym}_m a(z_1,\ldots,z_m)
    :=
    \frac{1}{m!}
    \sum_{\pi\in\mathfrak S_m}
    a(z_{\pi_1},\ldots,z_{\pi_m})
\]
for its symmetrization.

\begin{example}[Kendall's tau]
\label{example:kendall}
Suppose that $Z=(X,Y)$, where $X$ and $Y$ are real valued. Kendall's
tau admits the representation
\[
    \tau(\P)
    =
    \E_\P [ h_\tau(Z_1,Z_2)]  =
    \E_\P\left[
        \sgn(X_1-X_2)\sgn(Y_1-Y_2)
    \right]
\]
with
$
    h_\tau(z_1,z_2)
    :=
    \sgn(x_1-x_2)\sgn(y_1-y_2)$ and $\sgn(x):=\ind(x> 0)-\ind(x<0)$.
Its usual sample version is exactly the second-order U-statistic
$
    \widehat\tau_n=U_n(h_\tau).
$
\end{example}

\begin{example}[Spearman's rho]
\label{example:spearman}
Under the same setting, and assuming continuous marginals, Spearman's rho
can be written as
\[
    \rho(\P)
    =
    12\,\E_\P\{F_X(X)F_Y(Y)\}-3=
    3\,\E_\P\left[
        \sgn(X_1-X_2)\sgn(Y_1-Y_3)
    \right].
\]
Define the symmetric third-order kernel
\begin{align}
    h_\rho(z_1,z_2,z_3)
    :=
    \frac{1}{2}
    \sum_{\pi\in\mathfrak S_3}
    \sgn(x_{\pi_1}-x_{\pi_2})
    \sgn(y_{\pi_1}-y_{\pi_3}).
\end{align}
Then
$    \rho(\P)
    =
    \E_\P\{h_\rho(Z_1,Z_2,Z_3)\}.$ 
Let $U_{n,\rho}:=U_n(h_\rho)$. The usual sample Spearman correlation
$\widehat\rho_n$ satisfies the exact identity
\begin{align} \label{spearequiv}
    \widehat\rho_n
    =
    \frac{n-2}{n+1}U_{n,\rho}
    +
    \frac{3}{n+1}\widehat\tau_n.
\end{align}
In particular,
$\widehat\rho_n-U_{n,\rho}=O(n^{-1})$.
\end{example}

\begin{example}[Hoeffding's $D$ and the Blum--Kiefer--Rosenblatt
coefficient]
\label{example:hoeffding_bkr}
Let
\[
    \Delta_\P(x,y)
    :=
    F_{XY}(x,y)-F_X(x)F_Y(y).
\]
For continuous bivariate distributions, Hoeffding's dependence measure \citep{MR0029139} and the Blum--Kiefer--Rosenblatt dependence measure \citep{MR0125690} are respectively
\[
    D(\P)
    :=
    \int_{\R^2}
    \Delta_\P^2(x,y)
    \d\P_{XY}(x,y)
~~~{\rm and}~~~    R(\P)
    :=
    \int_{\R^2}
    \Delta_\P^2(x,y)
    \d\P_X(x)\d\P_Y(y).
\]
Both admit finite-order U-statistic representations. To make this explicit, define
\begin{align*}
    a_D(z_1,\ldots,z_5)
    &:=
    \frac14
    \{\ind(x_1\leq x_5)-\ind(x_2\leq x_5)\}
    \{\ind(x_3\leq x_5)-\ind(x_4\leq x_5)\}
    \\
    &\qquad\times
    \{\ind(y_1\leq y_5)-\ind(y_2\leq y_5)\}
    \{\ind(y_3\leq y_5)-\ind(y_4\leq y_5)\}
\end{align*}
and $
    h_D:=\operatorname{Sym}_5 a_D.$
Then
\[
    D(\P)
    =
    \E_\P\{h_D(Z_1,\ldots,Z_5)\}.
\]
Similarly, define
\begin{align*}
    a_R(z_1,\ldots,z_6)
    &:=
    \frac14
    \{\ind(x_1\leq x_5)-\ind(x_2\leq x_5)\}
    \{\ind(x_3\leq x_5)-\ind(x_4\leq x_5)\}
    \\
    &\qquad\times
    \{\ind(y_1\leq y_6)-\ind(y_2\leq y_6)\}
    \{\ind(y_3\leq y_6)-\ind(y_4\leq y_6)\},
\end{align*}
and let $h_R:=\operatorname{Sym}_6 a_R$.
Then
\[
    R(\P)
    =
    \E_\P\{h_R(Z_1,\ldots,Z_6)\}.
\]
Their usual unbiased sample versions are therefore U-statistics of orders
five and six, respectively.
\end{example}

\begin{example}[Bergsma--Dassios--Yanagimoto's $\tau^*$]
\label{example:tau_star}
For $x_1,\ldots,x_4\in\R$, define
\[
    a(x_1,x_2,x_3,x_4)
    :=
    \sgn\left(
        |x_1-x_2|
        +
        |x_3-x_4|
        -
        |x_1-x_3|
        -
        |x_2-x_4|
    \right).
\]
Then Bergsma--Dassios--Yanagimoto's sign covariance \citep{yanagimoto1970measures,MR3178526} can be represented as
\[
    \tau^*(\P)
    :=
    \E_\P\left[
        a(X_1,X_2,X_3,X_4)
        a(Y_1,Y_2,Y_3,Y_4)
    \right].
\]
Thus, with
$
    h_{\tau^*}
    :=
    \operatorname{Sym}_4
    \left[
        a(x_1,x_2,x_3,x_4)
        a(y_1,y_2,y_3,y_4)
    \right]$, we have
\[
    \tau^*(\P)
    =
    \E_\P\{h_{\tau^*}(Z_1,\ldots,Z_4)\}.
\]
The usual unbiased sample version is therefore a fourth-order
rank-based U-statistic.
\end{example}

\begin{example}[Distance covariance]
\label{example:dcov}
Let $X\in\R^p$ and $Y\in\R^q$, and write
\[
    a_{ij}:=\|X_i-X_j\|,
    \qquad
    b_{ij}:=\|Y_i-Y_j\|.
\]
The squared distance covariance \citep{MR2382665} admits the population representation
\[
     \operatorname{dCov}^2(\P)
    =
    \E_\P(a_{12}b_{12})
    +
    \E_\P(a_{12})\E_\P(b_{12})
    -
    2\E_\P(a_{12}b_{13}).
\]
Equivalently, define
$
    h_{\mathrm{dCov}}
    :=
    \operatorname{Sym}_4
    \left(
        a_{12}b_{12}
        +
        a_{12}b_{34}
        -
        2a_{12}b_{13}
    \right).
$
Then
\[
    \operatorname{dCov}^2(\P)
    =
    \E_\P\{
        h_{\mathrm{dCov}}(Z_1,Z_2,Z_3,Z_4)
    \}.
\]
The unbiased estimator of squared distance covariance is the corresponding
fourth-order U-statistic.
\end{example}

\begin{example}[Hilbert--Schmidt independence criterion]
\label{example:hsic}
Let $k:\mathcal X^2\to\R$ and
$\ell:\mathcal Y^2\to\R$ be symmetric positive-definite kernels, and
write
\[
    k_{ij}:=k(X_i,X_j),
    \qquad
    \ell_{ij}:=\ell(Y_i,Y_j).
\]
The population Hilbert--Schmidt independence criterion \citep{gretton2005measuring} can be written as
\begin{align}
    \operatorname{HSIC}(\P)
    &=
    \E_\P(k_{12}\ell_{12})
    +
    \E_\P(k_{12})\E_\P(\ell_{12})
    -
    2\E_\P(k_{12}\ell_{13}).
\end{align}
Define $
    h_{\mathrm{HSIC}}
    :=
    \operatorname{Sym}_4
    \left(
        k_{12}\ell_{12}
        +
        k_{12}\ell_{34}
        -
        2k_{12}\ell_{13}
    \right).
$
Then
\[
    \operatorname{HSIC}(\P)
    =
    \E_\P\{
        h_{\mathrm{HSIC}}(Z_1,Z_2,Z_3,Z_4)
    \}.
\]
For bounded kernels, $h_{\mathrm{HSIC}}$ is automatically square
integrable, and the usual unbiased HSIC estimator is a fourth-order
U-statistic.
\end{example}

The preceding examples all fall under the general representation \eqref{eq:u-md}. In the following discussion we  characterize the EIF of such functionals. Define the first-order Hoeffding projection of $h$ \citep{MR0026294} by
\begin{align}
    h_{1,\P}(z)
    :=
    \E_\P\{h(z,Z_2,\ldots,Z_m)\}
    -
    \psi_h(\P).
\end{align}

\begin{proposition}[EIF for U-statistic functionals]\label{prop:u_eif}
Suppose that $h$ is a measurable symmetric kernel satisfying $\E_\P[h^2(Z_1,\ldots,Z_m)]<\infty$, and $\psi_h$ is pathwise differentiable at $\P$. Then the EIF of $\psi_h$ is given by
\[
    \widetilde\psi_{h,\P}(z)=m h_{1,\P}(z).
\]
\end{proposition}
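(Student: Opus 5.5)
Since we work in the unrestricted nonparametric model with $\mathcal T_\P=L_0^2(\P)$, the influence function is unique. By the remark following Definition~\ref{def:eif}, it therefore suffices to show two things: first, that $m h_{1,\P}\in L_0^2(\P)$; second, that for every score $g$ in a dense subset of the tangent set, the pathwise derivative of $\psi_h$ along the corresponding submodel equals $\P(m h_{1,\P}\,g)$. Pathwise differentiability is assumed, so $\dot\psi_\P$ is a bounded linear functional on $L_0^2(\P)$. It is then enough to identify $\dot\psi_\P$ on bounded scores and extend by continuity, since bounded elements of $L_0^2(\P)$ are dense.

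\textbf{Membership in $L_0^2(\P)$.} By Fubini, $\E_\P h_{1,\P}(Z)=\psi_h(\P)-\psi_h(\P)=0$. By conditional Jensen, $\E_\P h_{1,\P}^2(Z)\le \E_\P h^2(Z_1,\ldots,Z_m)<\infty$.

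\textbf{Derivative along bounded scores.} Fix a bounded $s\in L_0^2(\P)$ and take the submodel $\d\P_\epsilon=(1+\epsilon s)\,\d\P$, as in Section~\ref{sec:sobol-efficient-estimation}. Its score is $s$. The product measure expands as
\[
\prod_{j=1}^m\{1+\epsilon s(z_j)\}=1+\epsilon\sum_{j=1}^m s(z_j)+\epsilon^2 R_\epsilon(z_1,\ldots,z_m),
\]
where $R_\epsilon$ is a polynomial in $\epsilon$ and the $s(z_j)$. Hence $R_\epsilon$ is uniformly bounded for $|\epsilon|\le 1$. Since $h\in L^2(\P^{\otimes m})\subset L^1(\P^{\otimes m})$, integrating $h$ against this expansion gives
\[
\psi_h(\P_\epsilon)=\psi_h(\P)+\epsilon\sum_{j=1}^m\E_\P\{h(Z_1,\ldots,Z_m)s(Z_j)\}+O(\epsilon^2).
\]
By symmetry of $h$, each summand equals $\E_\P\{h(Z_1,\ldots,Z_m)s(Z_1)\}$. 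Conditioning on $Z_1$ turns this into $\E_\P[\{h_{1,\P}(Z_1)+\psi_h(\P)\}s(Z_1)]$, and the constant term vanishes because $\E s=0$. Therefore
\[
\left.\frac{\d}{\d\epsilon}\psi_h(\P_\epsilon)\right|_{\epsilon=0}=\P(m h_{1,\P}\,s).
\]

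\textbf{Extension to all of $\mathcal T_\P$.} The two functionals $g\mapsto\dot\psi_\P(g)$ and $g\mapsto\P(mh_{1,\P}g)$ are both bounded and linear on $L_0^2(\P)$, and they agree on the dense set of bounded scores. Hence they coincide on all of $\mathcal T_\P$. By Definition~\ref{def:eif}, $m h_{1,\P}$ is then the EIF.

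\textbf{Main obstacle.} The only delicate point is this identification. The derivative computed along the specific submodels $(1+\epsilon s)\,\d\P$ must agree with the abstract pathwise derivative for general quadratic-mean-differentiable submodels, which may have unbounded scores. The assumed pathwise differentiability together with boundedness of $\dot\psi_\P$ is exactly what allows this density argument. Justifying the $O(\epsilon^2)$ remainder needs only integrability of $h$ and boundedness of $s$.
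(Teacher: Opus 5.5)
Your proposal is correct and follows essentially the same route as the paper. Both arguments differentiate along the bounded-score submodels $\d\P_\epsilon=(1+\epsilon s)\,\d\P$, use symmetry and conditioning on $Z_1$ to obtain $\P(mh_{1,\P}s)$, and extend by density of bounded mean-zero scores. The paper phrases the last step as $\widetilde\psi_{h,\P}-mh_{1,\P}$ being orthogonal to every bounded $s$, while you phrase it as two bounded functionals agreeing on a dense set, and your remark that $h\in L^1(\P^{\otimes m})$ already suffices for the $O(\epsilon^2)$ remainder is only a harmless sharpening of the paper's Cauchy--Schwarz step.
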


The same first-order projection governs the asymptotic behavior of the
sample U-statistic. Thus, in this setting, the classical Hoeffding
decomposition and the semiparametric efficiency expansion coincide
exactly at first order.

\begin{proposition}[Efficiency of the sample U-statistic]
\label{prop:u_efficiency}
Under the conditions of Proposition~\ref{prop:u_eif},
\begin{align}
    U_n(h)-\psi_h(\P)
    =
    \frac{1}{n}
    \sum_{i=1}^n
    \widetilde\psi_{h,\P}(Z_i)
    +
    o_\P(n^{-1/2}).
\end{align}
Consequently, $U_n(h)$ is efficient for
estimating $\psi_h(\P)$ in the unrestricted nonparametric model.
\end{proposition}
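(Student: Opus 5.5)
The plan has two parts: use the Hoeffding decomposition of $U_n(h)$ to get the asymptotic linear representation, then apply Theorem~\ref{thm:efficiency_characterization} for efficiency. Write $\theta:=\psi_h(\P)$ and, for $1\le c\le m$, let $h_{c,\P}$ denote the usual Hoeffding projections of $h-\theta$ onto functions of $c$ arguments that are degenerate in each argument. The first of these is exactly $h_{1,\P}$ defined above. Since $\E_\P[h^2]<\infty$, every $h_{c,\P}$ is square integrable. The Hoeffding decomposition then gives the exact identity
\[
U_n(h)-\theta=\sum_{c=1}^m \binom{m}{c}\,U_n(h_{c,\P}),
\]
where $U_n(h_{c,\P})$ is the U-statistic of order $c$ with kernel $h_{c,\P}$. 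For $c=1$ this term is simply $\frac1n\sum_{i=1}^n h_{1,\P}(Z_i)$, so the $c=1$ summand equals $\frac1n\sum_{i=1}^n m\,h_{1,\P}(Z_i)$. By Proposition~\ref{prop:u_eif} this is $\frac1n\sum_{i=1}^n\widetilde\psi_{h,\P}(Z_i)$.

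Next I would bound the remaining terms. The degenerate U-statistics $U_n(h_{c,\P})$ for different $c$, and the summands within each one, are mutually orthogonal in $L^2(\P)$. Hence
\[
\E\{U_n(h_{c,\P})^2\}=\binom{n}{c}^{-1}\E\{h_{c,\P}^2\}=O(n^{-c}).
\]
For $c\ge2$ this is $o(n^{-1})$, and Chebyshev's inequality gives $\sum_{c\ge2}\binom{m}{c}U_n(h_{c,\P})=O_\P(n^{-1})=o_\P(n^{-1/2})$. Combining the two pieces yields the asserted representation.

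The efficiency claim then follows directly. Under the standing hypothesis, $\psi_h$ is pathwise differentiable, and in the unrestricted model $\mathcal T_\P=L_0^2(\P)$. The function $\widetilde\psi_{h,\P}$ is the EIF by Proposition~\ref{prop:u_eif}. Statement (ii) of Theorem~\ref{thm:efficiency_characterization} is exactly what was just established, so $U_n(h)$ is regular and efficient.

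The only step that needs any care is the orthogonality computation. It relies on the degeneracy $\E[h_{c,\P}(z_1,\ldots,z_{c-1},Z_c)]=0$, and the needed square integrability of each $h_{c,\P}$ follows from $\E_\P h^2<\infty$ by conditional Jensen. Apart from that, the argument is standard (e.g.\ Hoeffding's 1948 variance formula), so I do not expect a genuine obstacle.
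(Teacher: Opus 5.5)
Your proposal is correct and follows the paper's proof: the Hoeffding decomposition isolates the linear term $\frac{m}{n}\sum_{i=1}^n h_{1,\P}(Z_i)=\frac1n\sum_{i=1}^n\widetilde\psi_{h,\P}(Z_i)$, the degenerate higher-order terms are $o_\P(n^{-1/2})$, and Theorem~\ref{thm:efficiency_characterization} then gives efficiency. Your explicit variance identity $\E\{U_n(h_{c,\P})^2\}=\binom{n}{c}^{-1}\E\{h_{c,\P}^2\}$ supplies the step the paper leaves implicit.
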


The preceding result immediately yields the efficiency of the classical sample versions of the dependence measures above.

\begin{corollary}
\label{cor:classical_efficiency}
\begin{enumerate}[itemsep=-.5ex,label=(\roman*)]
\item In the unrestricted nonparametric model, the usual unbiased U-statistic estimators of Kendall's tau,  Hoeffding's $D$, Blum--Kiefer--Rosenblatt's $R$, Bergsma--Dassios--Yanagimoto's $\tau^*$ are semiparametrically efficient for their respective population functionals. The same conclusion holds for the usual sample Spearman correlation, by the asymptotic equivalence presented in \eqref{spearequiv}.

\item The same conclusion holds for squared distance covariance and HSIC, provided that their corresponding U-statistic kernels are square-integrable and their population functionals are pathwise differentiable at $\P$. In particular, the latter conditions hold for HSIC with bounded kernels.
\end{enumerate}
\end{corollary}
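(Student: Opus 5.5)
The plan is to reduce Corollary~\ref{cor:classical_efficiency} to Proposition~\ref{prop:u_efficiency}. For each listed functional three things have to be checked: (a) the kernel $h$ is measurable and symmetric with $\E_\P[h^2]<\infty$; (b) $\psi_h$ is pathwise differentiable at $\P$ in the unrestricted model; (c) the usual estimator equals $U_n(h)$, or differs from it by $o_\P(n^{-1/2})$. Given (a)--(c), Proposition~\ref{prop:u_efficiency} yields the asymptotic linear representation with EIF $m h_{1,\P}$, and Theorem~\ref{thm:efficiency_characterization} yields regularity and efficiency.

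\textbf{Part (i).} The kernels $h_\tau,h_\rho,h_D,h_R,h_{\tau^*}$ are built from sign and indicator functions, so each is bounded by a constant: $|h_\tau|\le1$, $|h_\rho|\le3$, $|h_D|,|h_R|\le 1/4$, $|h_{\tau^*}|\le1$. Square integrability is therefore immediate.

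For pathwise differentiability I will use a general argument for bounded kernels. For any regular submodel $\P_t$ with score $g$, write
\[
\psi_h(\P_t)-\psi_h(\P)=\int h\,\Big\{\textstyle\prod_j \d\P_t(z_j)-\prod_j\d\P(z_j)\Big\}.
\]
Telescoping the product measure gives a sum of $m$ terms, each of the form $\int h\,\d\P_t^{\otimes(j-1)}\otimes(\d\P_t-\d\P)\otimes\d\P^{\otimes(m-j)}$. Since $h$ is bounded and the submodel is differentiable in quadratic mean, $t^{-1}(\d\P_t-\d\P)$ converges to $g\,\d\P$ against bounded functions. Also $\P_t\to\P$ in total variation. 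Together these give
\[
\frac{\d}{\d t}\psi_h(\P_t)\Big|_{t=0}=m\,\P(h_{1,\P}\,g).
\]
This is a bounded linear functional of $g$, so $\psi_h$ is differentiable.

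The estimators of Kendall's tau, $D$, $R$ and $\tau^*$ are by definition the U-statistics $U_n(h)$. Spearman's rho is handled by \eqref{spearequiv}: there $\widehat\rho_n-U_{n,\rho}=O(n^{-1})=o(n^{-1/2})$, using $|U_{n,\rho}|\le3$ and $|\widehat\tau_n|\le 1$. So $\widehat\rho_n$ inherits the linear representation of $U_{n,\rho}$ and is efficient by Theorem~\ref{thm:efficiency_characterization}.

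\textbf{Part (ii).} For dCov, square integrability and differentiability are assumed in the statement, so the conclusion follows directly from Proposition~\ref{prop:u_efficiency}. For HSIC with bounded $k,\ell$, the kernel $h_{\mathrm{HSIC}}$ is bounded, and the same telescoping argument as in part (i) gives pathwise differentiability.

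\textbf{Main obstacle.} The delicate step is justifying the derivative of $\int h\,\d\P_t^{\otimes m}$ along a general quadratic-mean-differentiable path, rather than only along bounded-score paths $\d\P_t=(1+tg)\,\d\P$. I would first handle bounded $g$, where the computation is exact because the expression is a polynomial in $t$. I would then pass to general DQM paths using the standard fact that $\int f\,\d(\P_t-\P)/t\to\P(fg)$ for bounded $f$, which follows from DQM via Cauchy--Schwarz, applied iteratively in each coordinate of the telescoped sum.
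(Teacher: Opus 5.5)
Your proposal is correct and follows essentially the same route as the paper: the kernels are bounded, and QMD gives the expansion $\|\P_t-\P-t\,g\P\|_{\mathrm{TV}}=o(|t|)$, which yields pathwise differentiability with derivative $m\,\P(h_{1,\P}g)$; Propositions~\ref{prop:u_eif}--\ref{prop:u_efficiency} and the $O(n^{-1})$ Spearman identity then finish the argument. Your coordinatewise telescoping is an unpacked version of the paper's product-measure expansion, and the $t$-dependent integrands in the telescoped terms are controlled by $\|\P_t-\P\|_{\mathrm{TV}}=O(|t|)$, which QMD supplies.
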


\begin{remark}[Degeneracy under independence]
\label{rem:classical_degeneracy}
For many measures of independence, including Hoeffding's $D$,
Blum--Kiefer--Rosenblatt's $R$, Bergsma--Dassios--Yanagimoto's
$\tau^*$, squared distance covariance, and HSIC, the first-order
Hoeffding projection vanishes under independence. Consequently,
$
    \widetilde\psi_{h,\P}=0
$
at an independence distribution, and the semiparametric efficiency bound
is zero. Proposition~\ref{prop:u_efficiency} remains valid, but the
root-$n$ limiting distribution is then degenerate and a nondegenerate
null limit arises only at a faster, higher-order scale; in the literature of U-statistics, this leads to the celebrated degenerate U-statistics theory \citep{MR0026294,nolan1987u}. We will revisit this degeneracy phenomenon
in the later sections.
\end{remark}

\section{Regression-based measures of dependence}
\label{sec:conditional_distribution}

A line of research initiated by
\cite{trutschnig2011strong} and \cite{MR3024030},
and subsequently popularized by
\cite{chatterjee2020new}, focuses on constructing
directed measures of dependence $\xi(\P)$ satisfying
the following two properties:
\begin{enumerate}[itemsep=-.5ex,label=(\roman*)]
    \item $\xi(\P)=0$ if and only if $X$ and $Y$ are
    independent;
    \item $\xi(\P)=1$ if and only if $Y$ is almost
    surely a measurable function of $X$.
\end{enumerate}
To introduce these measures, suppose that
$X\in\cX\subseteq\R^d$, for some fixed $d\geq1$,
and $Y\in\R$. For $t\in\R$ and $x\in\cX$, let
\[
    Q_t(x)
    :=
    \P(Y\leq t\mid X=x)
    =
    \E_\P[\ind(Y\leq t)\mid X=x],
    \qquad
    F_Y(t):=\P(Y\leq t).
\]
Throughout this section, we use $t$ for a generic
integration threshold and write $Q_y(x)$ when the
threshold is evaluated at a particular value $y$.

The population version of Chatterjee's correlation
coefficient, originally introduced by
\cite{MR3024030} and henceforth denoted by
$\xi(\P)$, is defined as
\begin{equation}
    \xi(\P)
    :=
    \frac{
        \displaystyle
        \int_{\R}
        \Var_\P\{Q_t(X)\}\,\d\P_Y(t)
    }{
        \displaystyle
        \int_{\R}
        \Var_\P\{\ind(Y\leq t)\}\,\d\P_Y(t)
    }.
    \label{eq:chatterjee_population}
\end{equation}
When $Y$ is continuously distributed, the denominator
simplifies to
\begin{align*}
    \int_{\R}
    \Var_\P\{\ind(Y\leq t)\}\,\d\P_Y(t)
    &=
    \int_{\R}
    F_Y(t)\{1-F_Y(t)\}\,\d\P_Y(t)=
    \int_0^1u(1-u)\,\d u
    =\frac16,
\end{align*}
and consequently,
\[
    \xi(\P)
    =
    6\int_{\R}
    \Var_\P\{Q_t(X)\}\,\d\P_Y(t).
\]
In particular, $\xi(\P)$ vanishes precisely under
independence and equals one precisely when $Y$ is
almost surely a measurable function of $X$.

Building on the same principle, recent work by
\cite{ansari2026quantifying} and
\cite{ansari2025ordering} has introduced broader
classes of regression-based measures of dependence,
defined as follows.

\begin{definition}[Regression-based measures of dependence]
Let $\phi:[-1,1]\to\R$ be convex, strictly
convex at zero, and satisfy $\phi(0)=0$.
We consider the following two classes
\begin{equation}
\begin{aligned}
    \xi_\phi(\P)
    &:=
    \frac1{\alpha_\phi}
    \int_{\R}\int_{\cX}
    \phi\{Q_t(x)-F_Y(t)\}
    \,\d\P_X(x)\d\P_Y(t),\\
    \Lambda_\phi(\P)
    &:=
    \frac1{\beta_\phi}
    \int_{\R}\int_{\cX^2}
    \phi\{Q_t(x_1)-Q_t(x_2)\}
    \,\d(\P_X\otimes\P_X)(x_1,x_2)
    \d\P_Y(t),
\end{aligned}
\label{eq:rmd}
\end{equation}
where the normalizing constants are given by
\begin{align}
    \alpha_\phi
    &:=
    \int_{\R^2}
    \phi\{\ind(y_1\leq t)-F_Y(t)\}
    \,\d\P_Y(y_1)\d\P_Y(t),
    \notag\\
    \beta_\phi
    &:=
    \int_{\R^3}
    \phi\{\ind(y_1\leq t)-\ind(y_2\leq t)\}
    \,\d\P_Y(y_1)\d\P_Y(y_2)\d\P_Y(t).
    \label{3.20}
\end{align}
\end{definition}

Under the stated conditions on $\phi$, both
$\xi_\phi(\P)$ and $\Lambda_\phi(\P)$ take values
in the interval $[0,1]$. Moreover, each vanishes if and only if
$X$ and $Y$ are independent, and equals one if and
only if $Y$ is almost surely a measurable function
of $X$ \citep[Theorem 4.1]{ansari2025ordering}.

For the semiparametric analysis below, we restrict
attention to continuously distributed $Y$. In this
case, the normalizing constants in~\eqref{3.20}
depend only on $\phi$, rather than on $\P$, and
admit simple closed-form expressions. This property
considerably simplifies the calculation of the EIFs
of both dependence measures.

\begin{proposition}[Normalizing constants for continuous $Y$, Remark 3.3(b) in \cite{ansari2026quantifying}]
\label{prop:normalizing_constants}
Suppose that $Y$ is continuously distributed. Then
\[
    \alpha_\phi
    =
    \int_0^1
    (1-u)\{\phi(u)+\phi(-u)\}\,\d u,
    \qquad
    \beta_\phi
    =
    \frac{\phi(1)+\phi(-1)}6.
\]
In particular, for $\phi(u)=u^2$, $\alpha_\phi=1/6$ and $\beta_\phi=1/3$.
\end{proposition}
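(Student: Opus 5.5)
The plan is to reduce both constants to integrals over the unit interval using the probability integral transform. Since $Y$ is continuous, $F_Y$ is continuous. Hence $U:=F_Y(Y)$ is uniform on $[0,1]$, and for $Y_1,Y_2,T$ i.i.d.\ from $\P_Y$ the three ties $\{Y_1=T\}$, $\{Y_2=T\}$, $\{Y_1=Y_2\}$ are null events. Throughout, the only property of $\phi$ used is $\phi(0)=0$; convexity merely guarantees that $\phi$ is bounded on $[-1,1]$, so all integrals below are finite.

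\emph{The constant $\alpha_\phi$.} For fixed $t$, the integrand $\phi\{\ind(y_1\le t)-F_Y(t)\}$ takes only two values:
\begin{itemize}
\item $\phi\{1-F_Y(t)\}$ with $\P_Y$-probability $F_Y(t)$;
\item $\phi\{-F_Y(t)\}$ with probability $1-F_Y(t)$.
\end{itemize}
Integrating first over $y_1$ therefore gives
\[
\alpha_\phi=\int_{\R}\big[F_Y(t)\phi\{1-F_Y(t)\}+\{1-F_Y(t)\}\phi\{-F_Y(t)\}\big]\,\d\P_Y(t).
\]
The integrand depends on $t$ only through $F_Y(t)$, so it is $\E[g(U)]$ with $U$ uniform. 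This yields
\[
\alpha_\phi=\int_0^1\big[v\,\phi(1-v)+(1-v)\,\phi(-v)\big]\,\d v .
\]
Substituting $u=1-v$ in the first term turns it into $\int_0^1(1-u)\phi(u)\,\d u$, which gives the stated formula. For $\phi(u)=u^2$ one gets $2\int_0^1(1-u)u^2\,\d u=2(1/3-1/4)=1/6$.

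\emph{The constant $\beta_\phi$.} The difference $\ind(y_1\le t)-\ind(y_2\le t)$ takes values in $\{-1,0,1\}$. Since $\phi(0)=0$, only the nonzero values contribute. Integrating over $(y_1,y_2)$ for fixed $t$ gives
\[
F_Y(t)\{1-F_Y(t)\}\{\phi(1)+\phi(-1)\}.
\]
Integrating over $t$ via $U=F_Y(T)$ then gives $\{\phi(1)+\phi(-1)\}\int_0^1u(1-u)\,\d u=\{\phi(1)+\phi(-1)\}/6$. For $u^2$ this equals $2/6=1/3$.

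The only step needing care, mild as it is, is justifying that $F_Y(T)$ is uniform when $T\sim\P_Y$ and $F_Y$ is continuous. This is the standard probability integral transform, which holds even if $F_Y$ has flat pieces, because continuity of $F_Y$ is all that is needed. The same fact already underlies the computation $\int F_Y(1-F_Y)\,\d\P_Y=1/6$ earlier in this section, so the argument introduces nothing beyond what the paper already uses.
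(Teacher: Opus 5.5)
Your proof is correct and follows essentially the same route as the paper: fix the threshold (the paper conditions on $Y_0$), note that the indicators are Bernoulli with success probability $F_Y(t)$, reduce to an integral against a uniform variable via the probability integral transform, then use the substitution $u\mapsto 1-u$ for $\alpha_\phi$ and $\phi(0)=0$ for $\beta_\phi$. Your remark about null ties is never used and can be dropped, but it does no harm.
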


We next derive the EIFs of the two classes of
dependence measures in~\eqref{eq:rmd} 
under the
nonparametric model
\[
\mathcal P_{\mathrm{cont}}
:=
\{\P:\P_Y\text{ is continuous}\}.
\]
Since bounded-score perturbations preserve
continuity of the marginal distribution,
the tangent space at every
$\P\in\mathcal P_{\mathrm{cont}}$
is $L_0^2(\P)$.

\begin{theorem}[EIFs of regression-based measures of dependence]
\label{thm:eif_generalized_chatterjee}
Suppose that $\phi:[-1,1]\to\R$ is continuously
differentiable, convex, strictly convex at zero,
and satisfies $\phi(0)=0$. 

\noindent
{\rm (i)} For $\xi_\phi(\P)$, write
\[
    \Delta_t(x):=Q_t(x)-F_Y(t),
    \qquad
    \overline\phi_\P(t)
    :=
    \int_{\cX}
    \phi'\{\Delta_t(u)\}\,\d\P_X(u),
\]
and define
\begin{align*}
    A_{\phi,\P}^{\xi}(y)
    &:=
    \int_{\cX}
    \phi\{\Delta_y(u)\}\,\d\P_X(u),~~~
    B_{\phi,\P}^{\xi}(x)
    :=
    \int_{\R}
    \phi\{\Delta_t(x)\}\,\d\P_Y(t),\\
    C_{\phi,\P}^{\xi}(x,y)
    &:=
    \int_{\R}
    \phi'\{\Delta_t(x)\}
    \{\ind(y\leq t)-Q_t(x)\}
    \,\d\P_Y(t),\\
    D_{\phi,\P}^{\xi}(y)
    &:=
    -\int_{\R}
    \overline\phi_\P(t)
    \{\ind(y\leq t)-F_Y(t)\}
    \,\d\P_Y(t).
\end{align*}
Then $\xi_\phi$ is differentiable at
$\P\in \mathcal P_{\mathrm{cont}}$, with EIF
\begin{equation}
\begin{aligned}
    \widetilde\xi_{\phi,\P}(x,y)
    =
    \frac1{\alpha_\phi}\Big\{
        &A_{\phi,\P}^{\xi}(y)
        +B_{\phi,\P}^{\xi}(x)
        +C_{\phi,\P}^{\xi}(x,y)
        +D_{\phi,\P}^{\xi}(y)
        -2\alpha_\phi\xi_\phi(\P)
    \Big\}.
\end{aligned}
\label{eq:eif_xi_phi}
\end{equation}

\noindent
{\rm (ii)} For $\Lambda_\phi(\P)$, define
\begin{align*}
    A_{\phi,\P}^{\Lambda}(y)
    &:=
    \int_{\cX^2}
    \phi\{Q_y(u)-Q_y(v)\}
    \,\d\P_X(u)\d\P_X(v),\\
    B_{\phi,\P}^{\Lambda}(x)
    &:=
    \int_{\R}\int_{\cX}
    \Big[
        \phi\{Q_t(x)-Q_t(u)\}
        +\phi\{Q_t(u)-Q_t(x)\}
    \Big]
    \,\d\P_X(u)\d\P_Y(t),\\
    C_{\phi,\P}^{\Lambda}(x,y)
    &:=
    \int_{\R}\int_{\cX}
    \Big[
        \phi'\{Q_t(x)-Q_t(u)\}
        -\phi'\{Q_t(u)-Q_t(x)\}
    \Big]
    \{\ind(y\leq t)-Q_t(x)\}
    \,\d\P_X(u)\d\P_Y(t).
\end{align*}
Then $\Lambda_\phi$ is differentiable
at $\P\in \mathcal P_{\mathrm{cont}}$, with EIF
\begin{equation}
\begin{aligned}
    \widetilde\Lambda_{\phi,\P}(x,y)
    =
    \frac1{\beta_\phi}\Big\{
        &A_{\phi,\P}^{\Lambda}(y)
        +B_{\phi,\P}^{\Lambda}(x)+C_{\phi,\P}^{\Lambda}(x,y)
        -3\beta_\phi\Lambda_\phi(\P)
    \Big\}.
\end{aligned}
\label{eq:eif_lambda_phi}
\end{equation}
\end{theorem}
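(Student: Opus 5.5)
We follow the perturbation argument used for the Sobol' index in Section~\ref{sec:sobol-efficient-estimation}. Fix a bounded score $s\in L_0^2(\P)$ and the submodel $\d\P_\epsilon=\{1+\epsilon s\}\,\d\P$, which stays in $\mathcal P_{\mathrm{cont}}$. By Proposition~\ref{prop:normalizing_constants}, $\alpha_\phi$ and $\beta_\phi$ do not depend on $\epsilon$, so only the numerators must be differentiated. Exactly as for $m_\epsilon$, with $Y$ replaced by $\ind(Y\le t)$, we obtain the exact identity
\[
Q_{t,\epsilon}(x)-Q_t(x)=\epsilon\,\frac{\E[\{\ind(Y\le t)-Q_t(X)\}s(Z)\mid X=x]}{1+\epsilon\E[s(Z)\mid X=x]}.
\]
Hence $\dot Q_t(x)=\E[\{\ind(Y\le t)-Q_t(X)\}s(Z)\mid X=x]$, and the error is $O(\epsilon^2)$ uniformly in $(t,x)$ because $|\ind-Q|\le1$ and $s$ is bounded. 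Similarly, $\dot F_Y(t)=\E[\{\ind(Y\le t)-F_Y(t)\}s(Z)]$, with an exact linear remainder. The perturbation of the marginals gives $\d\P_{X,\epsilon}=\{1+\epsilon\E[s\mid X]\}\d\P_X$ and $\d\P_{Y,\epsilon}=\{1+\epsilon\E[s\mid Y]\}\d\P_Y$.

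For (i), write the numerator as $N_\epsilon=\iint\phi\{Q_{t,\epsilon}(x)-F_{Y,\epsilon}(t)\}\,\d\P_{X,\epsilon}(x)\,\d\P_{Y,\epsilon}(t)$ and differentiate it by the product rule. This produces four contributions.
\begin{itemize}
\item The change in $\P_Y$ gives $\E[A^\xi_{\phi,\P}(Y)s(Z)]$.
\item The change in $\P_X$ gives $\E[B^\xi_{\phi,\P}(X)s(Z)]$.
\item The change in $Q_t$ gives $\iint\phi'\{\Delta_t(x)\}\dot Q_t(x)\,\d\P_X\,\d\P_Y$. By the tower property this equals $\E[C^\xi_{\phi,\P}(X,Y)s(Z)]$.
\item The change in $F_Y$ gives $-\int\overline\phi_\P(t)\dot F_Y(t)\,\d\P_Y(t)=\E[D^\xi_{\phi,\P}(Y)s(Z)]$ after Fubini.
\end{itemize}
Differentiation under the integral sign is justified by dominated convergence: $\phi$ is $C^1$ on the compact set $[-1,1]$, so $\phi'$ is bounded and uniformly continuous, and all perturbations are bounded uniformly in $(t,x)$. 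The scores are only defined up to centering, so I will replace $A$ and $B$ by their centered versions; this is legitimate because $\E s=0$. Since $\E A(Y)=\E B(X)=\alpha_\phi\xi_\phi(\P)$ and $\E C=\E D=0$, the centered representer is $\alpha_\phi^{-1}\{A+B+C+D-2\alpha_\phi\xi_\phi(\P)\}$, which is \eqref{eq:eif_xi_phi}. It is bounded, hence lies in $L_0^2(\P)$.

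For (ii) the steps are the same with the triple integral over $(x_1,x_2,t)$.
\begin{itemize}
\item The change in $\P_Y$ gives $A^\Lambda_{\phi,\P}$.
\item The change in the two copies of $\P_X$ gives two terms. Relabelling them yields the symmetrized $B^\Lambda_{\phi,\P}$.
\item Differentiating $\phi\{Q_t(x_1)-Q_t(x_2)\}$ gives $\phi'\{Q_t(x_1)-Q_t(x_2)\}\{\dot Q_t(x_1)-\dot Q_t(x_2)\}$. After swapping the roles of $x_1$ and $x_2$ in the second piece and conditioning, this gives $C^\Lambda_{\phi,\P}$.
\end{itemize}
No $F_Y$ term appears here. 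Centering uses $\E A=\Lambda_\phi\beta_\phi$ and $\E B=2\Lambda_\phi\beta_\phi$, which gives the constant $-3\beta_\phi\Lambda_\phi(\P)$.

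Finally, the tangent space is $L_0^2(\P)$ and bounded scores are dense in it. The derivative is represented by a bounded function, so the functional extends continuously to the whole tangent space. The representer is therefore the unique IF and hence the EIF.

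The main obstacle is the rigorous justification of the derivative of $\phi\{Q_{t,\epsilon}-F_{Y,\epsilon}\}$ inside the integrals. I will write
\[
\phi(a+\epsilon b_\epsilon)-\phi(a)=\epsilon b_\epsilon\int_0^1\phi'(a+r\epsilon b_\epsilon)\,\d r,
\]
and use the uniform continuity of $\phi'$ together with $b_\epsilon\to b$ uniformly to pass to the limit. One also has to check that the arguments remain in $[-1,1]$; this holds because $\P_\epsilon$ is a probability measure.
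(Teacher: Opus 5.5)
Your bounded-score computation is correct and matches the first half of the paper's proof step for step. It has the same four (resp.\ three) contributions, the same tower-property and $x\leftrightarrow u$ symmetrization steps, and the same centering constants.

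\textbf{The gap is in your final paragraph.} Definition~\ref{def:pathwise_diff} requires that $\epsilon\mapsto\xi_\phi(\P_\epsilon)$ be differentiable at $0$ with derivative $\P(\widetilde\xi_{\phi,\P}s)$ along \emph{every} QMD submodel. This includes paths with unbounded scores, and paths with bounded scores whose densities are not exactly $1+\epsilon s$ (QMD paths need not even be dominated by $\P$). Extending $s\mapsto\P(\widetilde\xi_{\phi,\P}s)$ by continuity gives a bounded linear functional on $L_0^2(\P)$. It says nothing about whether $\frac{\d}{\d\epsilon}\xi_\phi(\P_\epsilon)$ exists along such paths, or what it equals there: derivatives along different paths are not linked without a separate argument. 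The density argument only identifies the EIF \emph{once} differentiability is known; this is how it is used in the proof of Proposition~\ref{prop:u_eif}, where differentiability is assumed. What you have actually shown is differentiability relative to the smaller tangent set generated by the linear bounded-score paths. The paper accepts exactly that weaker conclusion for the Sobol' index, with an explicit caveat. For this theorem, however, it claims and proves the full statement.

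\textbf{Why the missing step is not automatic.} Your remainder control does not transfer. Along a general QMD path there is no uniform pointwise bound $|Q_{t,\epsilon}(x)-Q_t(x)|=O(\epsilon)$, so the device $\int_0^1\phi'(a+r\epsilon b_\epsilon)\,\d r$ with $b_\epsilon\to b$ uniformly is unavailable. The paper closes this with four ingredients:
\begin{enumerate}[label=(\alph*)]
\item the total-variation expansion $\|\P_\epsilon-\P-\epsilon s\P\|_{\mathrm{TV}}=o(|\epsilon|)$ implied by QMD, which also gives $\sup_t|F_{Y,\epsilon}(t)-F_Y(t)|=O(|\epsilon|)$;
\item a uniform $L^2$ bound $\sup_t\int(Q_{t,\epsilon}-Q_t)^2\,\d(\P_X+\P_{X,\epsilon})\le 2H^2(\P_\epsilon,\P)=O(\epsilon^2)$, obtained by decomposing the Hellinger distance of $(X,\ind(Y\le t))$ and applying data processing;
\item the exact identity $\int g\,(Q_{t,\epsilon}-Q_t)\,\d\P_{X,\epsilon}=(\P_\epsilon-\P)[g(X)\{\ind(Y\le t)-Q_t(X)\}]$ for bounded $g$, which linearizes the conditional-CDF perturbation terms into $(\P_\epsilon-\P)C^\xi_{\phi,\P}$ and $(\P_\epsilon-\P)C^\Lambda_{\phi,\P}$;
\item the modulus-of-continuity bound $|\phi(a+b)-\phi(a)-\phi'(a)b|\le\omega(\gamma)|b|+2\|\phi'\|_\infty b^2/\gamma$, which combined with (b) makes the integrated Taylor remainder $o(|\epsilon|)$ using only $\phi\in C^1$.
\end{enumerate}
With these, $\xi_\phi(\P_\epsilon)-\xi_\phi(\P)=(\P_\epsilon-\P)\widetilde\xi_{\phi,\P}+o(|\epsilon|)$, and similarly for $\Lambda_\phi$. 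Boundedness of the EIFs together with (a) then turns this into $\epsilon\,\P(\widetilde\xi_{\phi,\P}s)+o(|\epsilon|)$. You need to add this step, or else state explicitly that your differentiability claim is relative to the linear bounded-score paths only.
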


Theorem~\ref{thm:eif_generalized_chatterjee}
recovers, as a special case, the EIF of the
 Dette--Siburg--Stoimenov measure,
previously characterized by
\citet[Proposition 5.4]{klein2024asymptotic}
through its equivalent Cram\'er--von Mises
sensitivity index.

\begin{corollary}[EIF of the Dette--Siburg--Stoimenov measure]
\label{cor:eif_chatterjee}
Under the conditions of
Theorem~\ref{thm:eif_generalized_chatterjee},
let $\phi(u)=u^2$ and define
\[
    H_\P(y)
    :=
    \int_{\cX}Q_y(x)^2\,\d\P_X(x)~~~{\rm and}~~~
    \Psi_\P
    :=
    \int_{\R}H_\P(t)\,\d\P_Y(t).
\]
Then for any $\P\in \mathcal P_{\mathrm{cont}}$, $\xi_\phi(\P)
    =
    \Lambda_\phi(\P)
    =
    \xi(\P)
    =
    6\Psi_\P-2$,
and the corresponding EIFs coincide:
\begin{equation}
\begin{aligned}
    \widetilde\xi_\P(x,y)
    =
    6\Big[
        &H_\P(y)
        +\int_{\R}
        \Big\{
            2Q_t(x)\ind(y\leq t)
            -Q_t(x)^2
        \Big\}
        \,\d\P_Y(t)-2\Psi_\P
    \Big].
\end{aligned}
\label{eq:eif_chatterjee}
\end{equation}
\end{corollary}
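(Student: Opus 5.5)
The plan is to specialize Theorem~\ref{thm:eif_generalized_chatterjee} to $\phi(u)=u^2$ and simplify everything using continuity of $Y$. There are three steps: identify the three functionals, substitute into the EIF formula for $\widetilde\xi_{\phi,\P}$, and check that the formula for $\widetilde\Lambda_{\phi,\P}$ gives the same function. By Proposition~\ref{prop:normalizing_constants}, $\alpha_\phi=1/6$ and $\beta_\phi=1/3$.

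\textbf{Step 1: the functionals coincide.} For $\xi_\phi$ we have $\int\Delta_t(x)^2\,\d\P_X(x)=\Var\{Q_t(X)\}$, so $\xi_\phi(\P)=\xi(\P)$ by~\eqref{eq:chatterjee_population}. Moreover
\[
\int Q_t^2\,\d\P_X-F_Y(t)^2=H_\P(t)-F_Y(t)^2 .
\]
Since $F_Y(Y)$ is uniform on $[0,1]$, we get $\int F_Y^2\,\d\P_Y=1/3$, and hence $\xi_\phi(\P)=6(\Psi_\P-1/3)=6\Psi_\P-2$. For $\Lambda_\phi$, integrating $\{Q_t(x_1)-Q_t(x_2)\}^2$ over independent $x_1,x_2$ gives $2\Var\{Q_t(X)\}$. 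Dividing by $\beta_\phi=1/3$ yields the same value.

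\textbf{Step 2: simplify the EIF of $\xi_\phi$.} With $\phi'(u)=2u$ we have $\overline\phi_\P(t)=2\int\Delta_t\,\d\P_X=0$, because $\int Q_t\,\d\P_X=F_Y(t)$. Hence $D^\xi=0$. The remaining terms are:
\begin{itemize}
\item $A^\xi(y)=H_\P(y)-F_Y(y)^2$;
\item $B^\xi(x)=\int\{Q_t(x)^2-2Q_t(x)F_Y(t)+F_Y(t)^2\}\,\d\P_Y(t)$;
\item $C^\xi(x,y)=2\int\{Q_t(x)-F_Y(t)\}\{\ind(y\le t)-Q_t(x)\}\,\d\P_Y(t)$.
\end{itemize}
Adding $B^\xi+C^\xi$, the $Q_t(x)F_Y(t)$ cross terms cancel, leaving
\[
\int\{2Q_t(x)\ind(y\le t)-Q_t(x)^2\}\,\d\P_Y(t)-2\int F_Y(t)\ind(y\le t)\,\d\P_Y(t)+\tfrac13 .
\]
By continuity, $\int_{t\ge y}F_Y(t)\,\d\P_Y(t)=\{1-F_Y(y)^2\}/2$. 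So the second piece equals $-1+F_Y(y)^2$, and this $F_Y(y)^2$ cancels the $-F_Y(y)^2$ in $A^\xi$. The constant is then $-1+1/3-2\alpha_\phi\xi_\phi=-2/3-\tfrac13(6\Psi_\P-2)=-2\Psi_\P$. Multiplying by $1/\alpha_\phi=6$ gives~\eqref{eq:eif_chatterjee}.

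\textbf{Step 3: the EIF of $\Lambda_\phi$ agrees.} The two EIFs must coincide abstractly: the functionals agree on $\mathcal P_{\mathrm{cont}}$, the tangent space is $L_0^2(\P)$, and the EIF is unique there. A direct check is still reassuring.
\begin{itemize}
\item $A^\Lambda(y)=2\{H_\P(y)-F_Y(y)^2\}$.
\item $B^\Lambda(x)=2\int\{Q_t(x)^2-2Q_t(x)F_Y(t)+H_\P(t)\}\,\d\P_Y(t)$.
\item $C^\Lambda(x,y)=4\int\{Q_t(x)-F_Y(t)\}\{\ind(y\le t)-Q_t(x)\}\,\d\P_Y(t)$.
\end{itemize}
The same cancellations go through. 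After multiplying by $1/\beta_\phi=3$, the constant is $3\{2\Psi_\P-2-(6\Psi_\P-2)\}=-12\Psi_\P$, which matches $6\cdot(-2\Psi_\P)$.

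The only delicate point is the bookkeeping of constants. In particular, one must use the identity $\int\ind(y\le t)F_Y(t)\,\d\P_Y(t)=\{1-F_Y(y)^2\}/2$, which needs continuity of $\P_Y$.
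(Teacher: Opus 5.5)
Your proposal is correct and all the constants check out. With $D^\xi=0$, the sum $A^\xi+B^\xi+C^\xi-2\alpha_\phi\xi_\phi(\P)$ collapses to $H_\P(y)+\int\{2Q_t(x)\ind(y\le t)-Q_t(x)^2\}\,\d\P_Y(t)-2\Psi_\P$. The pairwise version $A^\Lambda+B^\Lambda+C^\Lambda-3\beta_\phi\Lambda_\phi(\P)$ equals $2H_\P(y)+2\int\{\cdots\}\,\d\P_Y(t)-4\Psi_\P$, so multiplying by $1/\beta_\phi=3$ gives the same function.

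Your route differs from the paper's, though. The paper does not specialize the general EIF formulas. After establishing $\xi_\phi=\Lambda_\phi=6\Psi_\P-2$ exactly as in your Step~1, it differentiates $\Psi_\P$ directly along $\d\P_\epsilon=(1+\epsilon s)\d\P$ with bounded $s$. Because the integrand $Q_t(x)^2$ contains no $F_Y$, only three contributions arise:
\begin{itemize}
\item the threshold measure gives $H_\P(Y)$;
\item the covariate measure gives $\int Q_t(X)^2\,\d\P_Y(t)$;
\item the squared conditional CDF gives $2\int Q_t(X)\{\ind(Y\le t)-Q_t(X)\}\,\d\P_Y(t)$.
\end{itemize}
The formula therefore appears with no cancellations at all. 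There is no $D$-term to kill, no need for the continuity identity $\int_{t\ge y}F_Y(t)\,\d\P_Y(t)=\{1-F_Y(y)^2\}/2$, and no constant bookkeeping.

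The price is that this direct calculation covers only bounded-score submodels. Pathwise differentiability and the identification of the EIF therefore rest implicitly on Theorem~\ref{thm:eif_generalized_chatterjee} (via $\xi=\xi_\phi$) together with density of bounded scores. The coincidence with $\widetilde\Lambda_{\phi,\P}$ is likewise left implicit in the paper; it is exactly your uniqueness argument in Step~3.

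Your route inherits differentiability along all QMD submodels directly from the theorem. Through the explicit check in Step~3, it also serves as a consistency test of the two general EIF formulas, at the cost of the extra algebra.
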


We conclude this section by examining the behavior of
the EIFs at the two extremes of functional dependence.
As the following corollary shows, both EIFs vanish
under independence and perfect functional dependence.
This degeneracy is consistent with the behavior of
classical dependence measures discussed in
Remark~\ref{rem:classical_degeneracy}; see also
 \cite{chatterjee2020new}.

\begin{corollary}[Vanishing EIFs at the extremes of dependence]
\label{prop:vanishing_eif_independence}
Suppose that the conditions of
Theorem~\ref{thm:eif_generalized_chatterjee}
hold. If $X$ and $Y$ are independent, or if
$Y=f(X)$ almost surely for some measurable
function $f$, then
\[
    \widetilde\xi_{\phi,\P}(X,Y)
    =
    \widetilde\Lambda_{\phi,\P}(X,Y)
    =
    0,
    \qquad \P\text{-almost surely}.
\]
\end{corollary}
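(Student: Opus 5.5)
We prove the corollary by substituting directly into the explicit EIF formulas~\eqref{eq:eif_xi_phi} and~\eqref{eq:eif_lambda_phi} of Theorem~\ref{thm:eif_generalized_chatterjee}. In each of the two extreme cases we evaluate the terms $A,B,C,D$ and check that their sum equals $2\alpha_\phi\xi_\phi(\P)$, respectively $3\beta_\phi\Lambda_\phi(\P)$, for $\P$-almost every $(x,y)$. The values of the functionals come from \citet[Theorem 4.1]{ansari2025ordering}. We have $\xi_\phi=\Lambda_\phi=0$ under independence and $\xi_\phi=\Lambda_\phi=1$ under perfect dependence. Continuity of $Y$ lets us use Proposition~\ref{prop:normalizing_constants} throughout.

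\emph{Independence.} Here $Q_t(x)=F_Y(t)$ for $\P_X$-a.e.\ $x$ and every $t$; we take a regular version and use right-continuity in $t$ to handle the null sets uniformly in $t$. Then $\Delta_t\equiv0$, so $A^\xi=B^\xi=0$ because $\phi(0)=0$. Next, $\phi'\{\Delta_t(x)\}=\phi'(0)$ is constant, so $C^\xi(x,y)=\phi'(0)\int\{\ind(y\le t)-F_Y(t)\}\,\d\P_Y(t)$. Likewise $\overline\phi_\P\equiv\phi'(0)$, so $D^\xi(y)=-C^\xi(x,y)$, and the sum of the four terms is $0$. For $\Lambda_\phi$ the same reasoning applies: $Q_t(u)-Q_t(v)=0$ gives $A^\Lambda=B^\Lambda=0$, and the bracket in $C^\Lambda$ is $\phi'(0)-\phi'(0)=0$.

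\emph{Perfect dependence, $Y=f(X)$.} Here $Q_t(x)=\ind(f(x)\le t)\in\{0,1\}$. This case carries the real computation. We evaluate at $(x,f(x))$.
\begin{itemize}
\item \textbf{Term $C$.} Since $\ind(y\le t)-Q_t(x)=\ind(f(x)\le t)-\ind(f(x)\le t)=0$, both $C^\xi$ and $C^\Lambda$ vanish on the support.
\item \textbf{Term $A^\xi(y)$.} This equals $\int\phi\{\ind(f(u)\le y)-F_Y(y)\}\,\d\P_X(u)=\int\phi\{\ind(y'\le y)-F_Y(y)\}\,\d\P_Y(y')$. After substituting $F_Y(y)=v$, this is $v\phi(1-v)+(1-v)\phi(-v)$.
\item \textbf{Term $B^\xi(x)$.} This equals $\int\phi\{\ind(y\le t)-F_Y(t)\}\,\d\P_Y(t)$ with $y=f(x)$. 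With $F_Y(y)=w$ and $F_Y(t)$ uniform, it is $\int_w^1\phi(1-u)\,\d u+\int_0^w\phi(-u)\,\d u$.
\item \textbf{Term $D^\xi(y)$.} Here $\overline\phi_\P(t)=F_Y(t)\phi'(1-F_Y(t))+(1-F_Y(t))\phi'(-F_Y(t))$. Hence $D^\xi(y)=-\int_0^1\{u\phi'(1-u)+(1-u)\phi'(-u)\}\{\ind(w\le u)-u\}\,\d u$, with $w=F_Y(y)$.
\end{itemize}

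The key step is showing that $A^\xi+B^\xi+D^\xi\equiv2\alpha_\phi$ as a function of $w\in(0,1)$. We will show the derivative in $w$ vanishes and then check the value at $w\to0$. Differentiating gives three contributions:
\begin{itemize}
\item $\frac{\d}{\d w}A^\xi=\phi(1-w)-w\phi'(1-w)-\phi(-w)-(1-w)\phi'(-w)$;
\item $\frac{\d}{\d w}B^\xi=-\phi(1-w)+\phi(-w)$;
\item $\frac{\d}{\d w}D^\xi=+\{w\phi'(1-w)+(1-w)\phi'(-w)\}$, since the indicator $\ind(w\le u)$ loses the point $u=w$.
\end{itemize}
These sum to zero. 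As $w\to0$, the sum tends to
\[
\phi(0)+\int_0^1\phi(1-u)\,\d u+\int_0^1\{u\phi'(1-u)+(1-u)\phi'(-u)\}u\,\d u .
\]
Integration by parts turns this into $\int_0^1(1-u)\{\phi(u)+\phi(-u)\}\,\d u+\alpha_\phi$, which equals $2\alpha_\phi$. Throughout we use continuity of $\phi'$ and $\phi(0)=0$. This bookkeeping is the main obstacle, and we will verify the integration by parts carefully.

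For $\Lambda_\phi$ the analogous computation is simpler.
\begin{itemize}
\item \textbf{Term $A^\Lambda(y)$.} With $F_Y(y)=w$, this is $w(1-w)\{\phi(1)+\phi(-1)\}$.
\item \textbf{Term $B^\Lambda(x)$.} Integrating the pairs, it equals $\int_0^1\{\ind(w\le u)(1-u)+\ind(w>u)u\}\{\phi(1)+\phi(-1)\}\,\d u=\{\phi(1)+\phi(-1)\}\{(1-w)^2+w^2\}/2$.
\end{itemize}
The sum of these two terms is $\{\phi(1)+\phi(-1)\}/2=3\beta_\phi$, as required. Combining both cases gives the corollary.
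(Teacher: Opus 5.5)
Your independence case is the same substitution the paper makes. In the perfect-dependence case, your evaluations of $A^\xi,B^\xi,C^\xi,D^\xi$ are correct. So are the derivative computation showing that $A^\xi+B^\xi+D^\xi$ is constant in $w$ and the entire $\Lambda_\phi$ computation.

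\textbf{The step that fails is the boundary evaluation.} Write $g(u):=u\phi'(1-u)+(1-u)\phi'(-u)$. As $w\to0$ you have $\ind(w\le u)\to1$, so $D^\xi\to-\int_0^1 g(u)(1-u)\,\d u$. The term $+\int_0^1 g(u)u\,\d u$ in your display is instead the limit as $w\to1$, while your $A^\xi$ and $B^\xi$ terms are the $w\to0$ limits. That mixed expression equals
\[
2\alpha_\phi+\int_0^1\{\phi(u)-\phi(-u)\}\,\d u .
\]
Hence the integration-by-parts identity you announce is false unless $\int_0^1\phi(u)\,\d u=\int_0^1\phi(-u)\,\d u$.

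It happens to work for $\phi(u)=u^2$, where $g\equiv0$. For the admissible choice $\phi(u)=u^2+u$ it is off by $1$. To see this, look at the linear part: it has $g\equiv1$ and contributes $0$ to $\alpha_\phi$, yet your expression gives it $\tfrac12+\tfrac12$.

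The correct limit is
\[
\int_0^1\phi(u)\,\d u-\int_0^1 g(u)(1-u)\,\d u .
\]
Integrating by parts and using $\phi(0)=0$,
\[
\int_0^1 u(1-u)\phi'(1-u)\,\d u=-\int_0^1(1-2u)\phi(u)\,\d u,\qquad \int_0^1(1-u)^2\phi'(-u)\,\d u=-2\int_0^1(1-u)\phi(-u)\,\d u .
\]
This gives $2\int_0^1(1-u)\{\phi(u)+\phi(-u)\}\,\d u=2\alpha_\phi$, as required. With this correction your proof is complete.

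\textbf{Comparison with the paper.} The paper avoids this bookkeeping entirely in the perfect-dependence case. For a bounded score $s$, the submodel $\d\P_\epsilon=(1+\epsilon s)\d\P$ remains supported on $\{y=f(x)\}$. Its $Y$-marginal stays continuous because $\P_\epsilon\ll\P$. Hence $\xi_\phi(\P_\epsilon)=\Lambda_\phi(\P_\epsilon)=1$ for all small $\epsilon$. By Theorem~\ref{thm:eif_generalized_chatterjee} this gives $\P(\widetilde\xi_{\phi,\P}s)=\P(\widetilde\Lambda_{\phi,\P}s)=0$ for every bounded mean-zero $s$, and density forces both EIFs to vanish.

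That argument needs no computation and applies to any pathwise differentiable functional that is constant on perfectly dependent laws. Your direct route costs the integration above, but in exchange it gives an explicit consistency check of the EIF formulas themselves.
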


\section{Estimation of regression-based measures of dependence}
\label{sec:estimation}

\subsection{Rank- and graph-based estimators}
\label{sec:rank_graph_estimators}

Having characterized the EIFs of regression-based measures
of dependence and the corresponding semiparametric efficiency
bounds, we now turn to their estimation. For $\xi(\P)$, two particularly influential estimators
are Chatterjee's rank correlation coefficient
\citep{chatterjee2020new} and its NN graph-based extension
due to Azadkia and Chatterjee \citep{azadkia2019simple}.
Both estimators are valued for being tuning-parameter-free, relying only on
response ranks and simple local comparisons, and being
computationally efficient
\citep{han2021extensions,chatterjee2024survey}.
As we show below, however, neither generally attains
the semiparametric efficiency bound.

We begin with Chatterjee's rank correlation coefficient
for a scalar covariate $X\in\R$. Let
$(X_1,Y_1),\ldots,(X_n,Y_n)$ be independent copies
of $(X,Y)$, and suppose that both $X$ and $Y$ are
continuously distributed. Let $\pi$ be the permutation
of $\{1,\ldots,n\}$ such that
\[
    X_{\pi(1)}<\cdots<X_{\pi(n)},
\]
and define the response ranks
\[
    R_i:=\sum_{j=1}^n\ind(Y_j\leq Y_i),
    \qquad
    r_i:=R_{\pi(i)}.
\]
Chatterjee's rank correlation coefficient is then
given by
\begin{equation}
    \widehat\xi_n^{\,\mathrm{Ch}}
    :=
    1-\frac{3}{n^2-1}
    \sum_{i=1}^{n-1}|r_{i+1}-r_i|.
    \label{eq:crc}
\end{equation}
Azadkia and Chatterjee proposed a graph-based extension
of~\eqref{eq:crc} to a multivariate covariate
$X\in\cX\subseteq\R^d$. For each
$i\in\{1,\ldots,n\}$, let $N(i)$ denote the index
of an NN of $X_i$ among $\{X_j:j\neq i\}$,
under the Euclidean metric. Their unconditional
graph correlation coefficient is defined by
\begin{equation}
    \widehat\xi_n^{\,\mathrm{AC}}
    :=
    \frac{6}{n^2-1}
    \sum_{i=1}^n
    \min\{R_i,R_{N(i)}\}
    -
    \frac{2n+1}{n-1}.
    \label{eq:ac}
\end{equation}
When $d=1$, both~\eqref{eq:crc} and~\eqref{eq:ac}
estimate the same population parameter $\xi(\P)$.
Nevertheless, they are distinct statistics and generally
have different asymptotic variances.

We next summarize recently obtained closed-form
expressions for their limiting variances. Throughout,
we retain the notation
\[
    Q_t(x)=\P(Y\leq t\mid X=x),
    \qquad
    H_\P(t)=\E_\P[Q_t(X)^2],
\]
introduced in Corollary~\ref{cor:eif_chatterjee}.
Since $H_\P$ is nondecreasing, we write
$\d H_\P(t)$ for its associated Lebesgue--Stieltjes
measure.

For Chatterjee's rank correlation coefficient, define,
for $s,t\in\R$,
\begin{align*}
    C_{11}(t,s)
    &:=
    \E_\P[Q_t(X)^2Q_s(X)^2]
    -H_\P(t)H_\P(s)\\
    &\quad+
    \E_\P\Big[
        \{Q_{t\wedge s}(X)+3Q_t(X)Q_s(X)\}
        \{Q_{t\wedge s}(X)-Q_t(X)Q_s(X)\}
    \Big],\\
C_{12}(t,s)
&:=\E_\P[Q_t(X)^2Q_s(X)]
  -H_\P(t)F_Y(s)
+2\E_\P\!\left[
  Q_t(X)\{Q_{t\wedge s}(X)-Q_t(X)Q_s(X)\}
\right],\\
    C_{22}(t,s)
    &:=
    F_Y(t\wedge s)-F_Y(t)F_Y(s).
\end{align*}

For the graph-based estimator, let
$\widetilde Y,\widetilde Y',\widetilde Y''$
be conditionally independent copies of $Y$ given $X$,
also conditionally independent of $Y$, with common
conditional distribution $\P_{Y\mid X}$.
In addition, let $(X_1,Y_1)$ and $(X_2,Y_2)$
be independent copies of $(X,Y)$. For each
$i\in\{1,2\}$, let $\widetilde Y_i$ and
$\widetilde Y_i'$ be conditionally independent copies
of $Y_i$ given $X_i$, also conditionally independent
of $Y_i$. Define
\begin{align*}
    T_1
    &:=
    \E_\P[
        F_Y^2(Y\wedge\widetilde Y)
    ],~~~
    T_2
    :=
    \E_\P[
        F_Y(Y\wedge\widetilde Y)
        F_Y(Y\wedge\widetilde Y')
    ],\\
    T_3
    &:=
    \E_\P[
        F_Y(Y\wedge\widetilde Y)
        F_Y(\widetilde Y'\wedge\widetilde Y'')
    ],~~~
    T_4
    :=
    \E_\P[
        \ind(Y_1\leq Y_2\wedge\widetilde Y_2)
        F_Y(Y_1\wedge\widetilde Y_1)
    ],\\
    T_5
    &:=
    \E_\P[
        \ind(Y_1\leq Y_2\wedge\widetilde Y_2)
        F_Y(\widetilde Y_1\wedge\widetilde Y_1')
    ],~~~
    T_6
    :=
    \E_\P\Big[
        F_Y(
            Y_1\wedge\widetilde Y_1
            \wedge Y_2\wedge\widetilde Y_2
        )
    \Big],\\
    T_7
    &:=
    \Big\{
        \E_\P[
            F_Y(Y\wedge\widetilde Y)
        ]
    \Big\}^2.
\end{align*}
Finally, let $\mathfrak{q}_d$ and $\mathfrak{o}_d$ denote the two positive
geometric constants associated with the
$d$-dimensional NN graph
\citep[Theorem 3.1]{shi2021ac}, given by
\begin{align*}
    \mathfrak{q}_d
    &:=
    \left\{
        2-
        I_{3/4}\left(
            \frac{d+1}{2},\frac12
        \right)
    \right\}^{-1},~~~
    \mathfrak{o}_d
    :=
    \int_{\Gamma_{d;2}}
    \exp\left[
        -\lambda\left\{
            B(w_1,\|w_1\|)
            \cup
            B(w_2,\|w_2\|)
        \right\}
    \right]
    \d(w_1,w_2),
\end{align*}
where
\[
    \Gamma_{d;2}
    :=
    \left\{
        (w_1,w_2)\in(\R^d)^2:
        \max\{\|w_1\|,\|w_2\|\}
        <
        \|w_1-w_2\|
    \right\},
\]
$B(w,r)$ denotes the Euclidean ball centered at $w$
with radius $r$, $\lambda$ denotes Lebesgue measure
on $\R^d$, and $I_x(a,b)$ is the regularized
incomplete beta function.

The following theorem summarizes the closed-form asymptotic variances derived by \citet[Theorem 2.3]{chhaibi2026martingale} and \citet[Theorem 2.1]{gao2026limit}, respectively. For Chatterjee's rank correlation coefficient, we correct a transposition of the off-diagonal covariance terms in \citet[Theorem 2.3]{chhaibi2026martingale}, ensuring that $C_{12}(t,s)$ is indexed consistently with the corresponding integration measures. Combined with the earlier CLTs of \citet{lin2022limit} (see also \citealp{kroll2024asymptotic}), these results yield the following non-studentized asymptotic distributions.

\begin{theorem}[Closed-form limiting variances]
\label{thm:rank_graph_variance}
Suppose that $Y$ is continuously distributed.
\begin{enumerate}[itemsep=-.5ex,label=(\roman*)]
    \item Suppose that $d=1$ and $X$ is
    continuously distributed. Under conditions
    ensuring that the bias of
    $\widehat\xi_n^{\,\mathrm{Ch}}$ is
    $o(n^{-1/2})$ (cf.\
    \citet[Proposition 1.1]{lin2022limit}),
    \[
        \sqrt n
        \{\widehat\xi_n^{\,\mathrm{Ch}}-\xi(\P)\}
        \Rightarrow
        \cN(0,\sigma_{\mathrm{Ch}}^2),
    \]
    where
    \begin{align*}
        \sigma_{\mathrm{Ch}}^2
        =36\Big\{
        &\int_{\R^2}
        C_{11}(t,s)
        \,\d\P_Y(t)\d\P_Y(s)
        +\int_{\R^2}
        C_{22}(t,s)
        \,\d H_\P(t)\d H_\P(s)
        -2\int_{\R^2}
        C_{12}(t,s)
        \,\d\P_Y(t)\d H_\P(s)
        \Big\}.
    \end{align*}

    \item Suppose that $d\geq 1$ and $X$ is Lebesgue absolutely
    continuous and has a continuous density on
    its support. Assume further that the bias of
    $\widehat\xi_n^{\,\mathrm{AC}}$ is
    $o(n^{-1/2})$, either directly or after
    a bias correction that preserves the
    first-order stochastic term
    \citep{azadkia2026bias}. Then
    \[
        \sqrt n
        \{\widehat\xi_n^{\,\mathrm{AC}}-\xi(\P)\}
        \Rightarrow
        \cN(0,\sigma_{\mathrm{AC},d}^2),
    \]
    where
    \begin{align*}
        \sigma_{\mathrm{AC},d}^2
        =36\Big\{
        &(1+\mathfrak{q}_d)T_1
        +(2-2\mathfrak{q}_d+\mathfrak{o}_d)T_2
        -(2-\mathfrak{q}_d+\mathfrak{o}_d)T_3
        +4T_4-2T_5+T_6-4T_7
        \Big\}.
    \end{align*}
\end{enumerate}
\end{theorem}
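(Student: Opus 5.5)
The plan is to assemble the statement from existing results rather than re-derive the limit theory. Each part splits into two pieces: (a) a central limit theorem for the centered statistic $\sqrt n\{\widehat\xi_n-\E\widehat\xi_n\}$, and (b) an identification of the limiting variance in closed form. The bias hypotheses in (i) and (ii), namely $\E\widehat\xi_n-\xi(\P)=o(n^{-1/2})$, then let us recenter at $\xi(\P)$ by Slutsky's lemma. A bias correction that changes the statistic only by a term that is deterministic up to $o_\P(n^{-1/2})$ leaves the first-order stochastic term, and hence the limit, unchanged.

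For (i), I would take the asymptotic normality of $\widehat\xi_n^{\,\mathrm{Ch}}$ from \citet{lin2022limit} (see also \citealp{kroll2024asymptotic}). Since $\sigma^2_{\mathrm{Ch}}$ is the limit of $n\Var(\widehat\xi_n^{\,\mathrm{Ch}})$, its value is given by \citet[Theorem 2.3]{chhaibi2026martingale}. The only work is to check the corrected indexing of $C_{12}$. To do this I would redo the first-order linearization heuristically. Write $\widehat\xi_n^{\,\mathrm{Ch}}\approx 6\int \widehat H_n\,\d\widehat F_{Y,n}-2$, where $\widehat H_n(t)$ is the NN-based proxy for $\E Q_t(X)^2$. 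This gives two processes:
\begin{itemize}
\item $\mathbb G_1(t)$, the fluctuation of the $H$-part, which is integrated against $\d\P_Y(t)$;
\item $\mathbb G_2(s)=\sqrt n\{\widehat F_{Y,n}(s)-F_Y(s)\}$, which is integrated against $\d H_\P(s)$ after integration by parts (hence the minus sign).
\end{itemize}
The resulting variance is $36\{\iint\mathrm{Cov}(\mathbb G_1(t),\mathbb G_1(s))+\iint\mathrm{Cov}(\mathbb G_2(t),\mathbb G_2(s))-2\iint\mathrm{Cov}(\mathbb G_1(t),\mathbb G_2(s))\}$ with those measures. One checks directly that $\mathrm{Cov}(\mathbb G_2(t),\mathbb G_2(s))=C_{22}$. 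Next, $\mathrm{Cov}(\mathbb G_1(t),\mathbb G_2(s))$ equals $\E[Q_t^2Q_s]-H_\P(t)F_Y(s)$ plus the conditional-covariance contribution $2\E[Q_t\{Q_{t\wedge s}-Q_tQ_s\}]$. The latter comes from the neighbor pair sharing the local conditional law, using $\mathrm{Cov}(\ind(\widetilde Y\le t),\ind(\widetilde Y\le s)\mid X)=Q_{t\wedge s}-Q_tQ_s$. This shows that the first argument of $C_{12}$ pairs with $\d\P_Y$ and the second with $\d H_\P$, which is the correction claimed.

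For (ii), asymptotic normality of the (possibly bias-corrected) $\widehat\xi_n^{\,\mathrm{AC}}$ under the stated density assumptions is the CLT of \citet{shi2021ac} (and \citealp{azadkia2026bias} for the corrected version). The limiting variance in terms of $T_1,\dots,T_7$ and the NN-graph constants $\mathfrak q_d,\mathfrak o_d$ is \citet[Theorem 2.1]{gao2026limit}. The constants enter through the following limits:
\begin{itemize}
\item $\mathfrak q_d$ is the limiting probability that two points are mutual nearest neighbours;
\item $\mathfrak o_d$ is the limiting expected number of pairs sharing a common nearest neighbour, suitably normalized.
\end{itemize}

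The main obstacle is not existence of the limit but the consistency check of the off-diagonal term in (i). Getting the sign and which argument carries $t\wedge s$ right requires careful bookkeeping of the Hájek-type projection of the rank statistic. I would verify it by evaluating both versions under independence, where $Q_t=F_Y$, and confirming that only the corrected version gives the known null variance $2/5$ after the appropriate normalization.
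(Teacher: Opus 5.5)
Your route is the same as the paper's. The theorem is assembled from existing CLTs and closed-form variance limits, with the transposition in \citet[Theorem 2.3]{chhaibi2026martingale} corrected. Your linearization of $6\int\widehat H_n\,\d\widehat F_{Y,n}-2$ correctly gives $\mathrm{Cov}(\mathbb G_1(t),\mathbb G_2(s))=C_{12}(t,s)$, with $t$ paired with $\d\P_Y$ and $s$ with $\d H_\P$. Your independence check also genuinely discriminates between the two versions. With $Q_t=F_Y$ one finds $\iint C_{11}\,\d\P_Y\,\d\P_Y=1/10$ and $\iint C_{22}\,\d H_\P\,\d H_\P=4/45$. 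The cross integral is $4/45$ under the corrected pairing and $1/10$ under the transposed one. This gives $36(1/10+4/45-8/45)=2/5$ versus $36(1/10+4/45-1/5)=-2/5$. That is more support for the correction than the paper itself offers.

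The step that fails as written is your source for the CLT in part (ii). \citet{shi2021ac} establish asymptotic normality of $\widehat\xi_n^{\,\mathrm{AC}}$ only under independence; that is where the distribution-free null variance $\frac25+\frac25\mathfrak q_d+\frac45\mathfrak o_d$ comes from. Their result therefore says nothing about $\sqrt n\{\widehat\xi_n^{\,\mathrm{AC}}-\E\widehat\xi_n^{\,\mathrm{AC}}\}$ under the general dependence that part (ii) concerns, and $\sigma_{\mathrm{AC},d}^2$ would be left attached to no limit theorem. The general-dependence CLT for the graph-based coefficient is in \citet{lin2022limit}, the same reference you use for part (i). It gives asymptotic normality whenever $Y$ is not almost surely a measurable function of $X$. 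Combining it with \citet[Theorem 2.1]{gao2026limit} for the variance, and with \citet{azadkia2026bias} for the bias-corrected statistic, is how the paper obtains (ii). A minor point: $\mathfrak q_d$ is the limiting probability that a point and its nearest neighbour are mutual nearest neighbours; for two fixed points that probability is only $O(n^{-1})$.
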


Under independence, these limiting variances
simplify to the distribution-free expressions
\[
    \sigma_{\mathrm{Ch}}^2=\frac25,
    \qquad
    \sigma_{\mathrm{AC},d}^2
    =
    \frac25+\frac25\mathfrak{q}_d+\frac45\mathfrak{o}_d,
\]
previously derived by
\cite{chatterjee2020new} and
\cite{shi2021ac},
respectively. In particular, both estimators
have nondegenerate root-$n$ fluctuations under
independence. By contrast,
Corollary~\ref{prop:vanishing_eif_independence}
shows that the EIF of $\xi(\P)$ vanishes
at independence, so the corresponding
semiparametric efficiency bound is zero.
Thus, neither estimator attains the
efficiency bound at independence.

We next extend this comparison to general
dependence structures. Specifically, we compare
the limiting variances of
$\widehat\xi_n^{\,\mathrm{Ch}}$ and
$\widehat\xi_n^{\,\mathrm{AC}}$ with
\[
    \mathcal V_{\mathrm{eff}}(\P;\xi)
    :=
    \E_\P[\widetilde\xi_\P(X,Y)^2],
\]
where $\widetilde\xi_\P$ is the EIF
given in~\eqref{eq:eif_chatterjee}.
To quantify the efficiency gaps, define,
for $s,t\in\R$ and $x\in\cX$,
\[
    R_\P(t,s;x)
    :=
    Q_{s\wedge t}(x)-Q_s(x)Q_t(x),
~~~
    \Delta_{1,\P}
    :=
    \int_{\R^2}
    \E_\P[R_\P(t,s;X)^2]
    \,\d\P_Y(t)\d\P_Y(s).
\]
Notice that $R_\P(t,s;X)$ is the conditional
covariance of $\ind(Y\leq t)$ and
$\ind(Y\leq s)$ given $X$.
Recall that $\widetilde Y$ denotes a
conditionally independent copy of $Y$ given
$X$. Define
\begin{align*}
    m_\P(x,y)
    &:=
    \E_\P[
        F_Y(y\wedge\widetilde Y)
        \mid X=x
    ],~~~
    \theta_\P(x)
    :=
    \E_\P[
        F_Y(Y\wedge\widetilde Y)
        \mid X=x
    ],
\end{align*}
and set
\[
    \Delta_{2,\P}
    :=
    \E_\P[
        \{m_\P(X,Y)-\theta_\P(X)\}^2
    ].
\]
Both $\Delta_{1,\P}$ and $\Delta_{2,\P}$
are nonnegative by construction.

The following theorem gives an explicit
decomposition of the limiting variances
into the semiparametric efficiency bound
and additional nonnegative terms.

\begin{theorem}[Efficiency gaps of rank- and graph-based estimators]
\label{thm:general_inefficiency}
Suppose that $Y$ is continuously distributed
and that the corresponding conditions of
Theorem~\ref{thm:rank_graph_variance}
hold for each estimator under consideration.
Then the following hold true.
\begin{enumerate}[itemsep=-.5ex,label=(\roman*)]
    \item The limiting variance of Chatterjee's
    rank correlation coefficient satisfies
    \[
        \sigma_{\mathrm{Ch}}^2
        =
        \mathcal V_{\mathrm{eff}}(\P;\xi)
        +36\Delta_{1,\P}.
    \]

    \item The limiting variance of the
    Azadkia--Chatterjee graph correlation
    coefficient satisfies
    \[
        \sigma_{\mathrm{AC},d}^2
        =
        \mathcal V_{\mathrm{eff}}(\P;\xi)
        +
        36\left\{
            (1+\mathfrak{q}_d)\Delta_{1,\P}
            +\mathfrak{o}_d\Delta_{2,\P}
        \right\}.
    \]

    \item $\Delta_{1,\P}>0$ and $\Delta_{2,\P}>0$ if and only if $Y$ is not almost surely a measurable function of $X$. Consequently, both estimators have asymptotic variances strictly exceeding the semiparametric efficiency bound, except under perfect functional dependence, where both limiting variances and the efficiency bound vanish.
\end{enumerate}
\end{theorem}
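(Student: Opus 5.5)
Write $U_t:=\ind(Y\leq t)$. The plan is to express everything in terms of the covariance structure of the process $U_t-Q_t(X)$ and the random field $Q_t(X)$, and then subtract. We start from \eqref{eq:eif_chatterjee}. Since $\int \{2Q_t(x)\ind(y\leq t)-Q_t(x)^2\}\,\d\P_Y(t)=\int Q_t(x)^2\,\d\P_Y(t)+2\int Q_t(x)\{U_t-Q_t(x)\}\,\d\P_Y(t)$, the EIF decomposes into three orthogonal pieces:
\[
\tfrac16\widetilde\xi_\P=\underbrace{\Big\{\int Q_t(X)^2\d\P_Y(t)-\Psi_\P\Big\}}_{a(X)}+\underbrace{2\int Q_t(X)\{U_t-Q_t(X)\}\d\P_Y(t)}_{b(X,Y)}+\underbrace{\{H_\P(Y)-\Psi_\P\}}_{c(Y)}.
\]
The first two pieces are orthogonal because $\E[b\mid X]=0$. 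Hence $\mathcal V_{\mathrm{eff}}/36=\E a^2+\E b^2+\E c^2+2\E(ac)+2\E(bc)$, and each term can be written as a double integral over $(t,s)$:
\[
\E a^2=\iint\{\E Q_t^2Q_s^2-H_\P(t)H_\P(s)\}\,\d\P_Y\d\P_Y,\qquad \E b^2=4\iint\E[Q_tQ_sR_\P(t,s;X)]\,\d\P_Y\d\P_Y.
\]
For the $c$ terms, use $H_\P(Y)-\Psi_\P=-\int\{U_s-F_Y(s)\}\,\d H_\P(s)$, which follows by integration by parts since $H_\P(Y)=\int\ind(Y\leq s)\ldots$ (more precisely, $H_\P(Y)=H_\P(\infty)-\int \ind(Y\le s)\,\d H_\P(s)$). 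This gives $\E c^2=\iint C_{22}\,\d H_\P\d H_\P$, and the cross terms $\E(ac)$ and $\E(bc)$ become integrals against $\d\P_Y(t)\,\d H_\P(s)$ of $\E[Q_t^2Q_s]-H_\P(t)F_Y(s)$ and $2\E[Q_tR_\P(t,s;X)]$, respectively. Their sum is exactly $C_{12}(t,s)$, with the minus sign supplied by the representation of $c$.

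\textbf{Part (i).} Comparing with $C_{11}$, we have
\[
(Q_{t\wedge s}+3Q_tQ_s)R_\P=4Q_tQ_sR_\P+(Q_{t\wedge s}-Q_tQ_s)R_\P=4Q_tQ_sR_\P+R_\P^2.
\]
Therefore $\sigma^2_{\mathrm{Ch}}/36-\mathcal V_{\mathrm{eff}}/36=\iint\E R_\P^2\,\d\P_Y\d\P_Y=\Delta_{1,\P}$. The $C_{12}$ and $C_{22}$ terms match identically, which is the reason the paper's transposition correction matters.

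\textbf{Part (ii).} The plan is to rewrite each $T_j$ as integrals of products of $Q$'s, using $F_Y(y\wedge\widetilde y)=\int\ind(y\wedge\widetilde y\ge$ something$)$. Concretely, $F_Y(Y\wedge\widetilde Y)=\int\ind(t<Y)\ind(t<\widetilde Y)\,\d\P_Y(t)$, so conditional independence gives, for example, $T_1=\iint\E[(1-Q_{t\vee s})^2]$, $T_2=\iint\E[(1-Q_{t\vee s})(1-Q_t)(1-Q_s)]$, and $T_3=\iint\E[(1-Q_t)^2(1-Q_s)^2]$. Substituting $1-Q$ versus $Q$ shows $T_1-2T_2+T_3=\Delta_{1,\P}$ after symmetrising; this is the survival-function version of $R_\P$, and $R_\P$ is invariant under $Q\mapsto 1-Q$. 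Similarly, $m_\P(x,y)=\int\ind(t<y)(1-Q_t(x))\,\d\P_Y(t)$ and $\theta_\P(x)=\E[m_\P(x,Y)\mid X=x]$, so $\Delta_{2,\P}=T_2-T_3$. The coefficients then regroup as
\[
(1+\mathfrak q_d)T_1+(2-2\mathfrak q_d+\mathfrak o_d)T_2-(2-\mathfrak q_d+\mathfrak o_d)T_3=(1+\mathfrak q_d)(T_1-2T_2+T_3)+\mathfrak o_d(T_2-T_3)+4T_2-3T_3-T_1\cdots
\]
One must then check that the residual, namely $3T_1+\ldots$ up to the correct constants together with $4T_4-2T_5+T_6-4T_7$, equals $\mathcal V_{\mathrm{eff}}/36$. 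The check reuses the $(t,s)$-integral expressions for $\E a^2$, $\E b^2$, $\E c^2$ and the cross terms. The constant $d$-free part must reduce to the $d$-free efficient variance, and a useful consistency check is that part (i) with $\mathfrak q=\mathfrak o=0$ should formally correspond. This bookkeeping, converting $T_4,T_5,T_6$ (which involve two independent observations and indicators $\ind(Y_1\le Y_2\wedge\widetilde Y_2)$) into $C_{12}$- and $C_{22}$-type integrals, is the main obstacle. I would handle it by writing every term as $\iint\E[\cdot]\,\d\P_Y(t)\,\d H_\P(s)$-type integrals, noting that $\d H_\P$ arises from $\P(Y_2\wedge\widetilde Y_2>s)$ terms.

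\textbf{Part (iii).} Since $R_\P(t,s;x)^2\ge0$ and $R_\P(t,t;x)=Q_t(1-Q_t)$, we have $\Delta_{1,\P}=0$ iff $Q_t(X)\in\{0,1\}$ a.s.\ for $\P_Y$-a.e.\ $t$ (use continuity in $t$ by right-continuity and a diagonal/neighbourhood argument). This holds iff the conditional law is a point mass, i.e., $Y=f(X)$. Similarly, $\Delta_{2,\P}=\E\Var(m_\P(X,Y)\mid X)$, and $y\mapsto m_\P(x,y)$ is strictly increasing on the support of $\P_Y$ wherever $Q_t(x)<1$. Hence it vanishes iff $Y$ is conditionally degenerate. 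The efficiency bound vanishes there by Corollary~\ref{prop:vanishing_eif_independence}.
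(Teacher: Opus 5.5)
There is a genuine gap, and it is in part (ii). Your part (i) is the paper's computation. Your survival-function expressions for $T_1,T_2,T_3$ correctly give $T_1-2T_2+T_3=\Delta_{1,\P}$ and $T_2-T_3=\Delta_{2,\P}$, a slightly more direct route than the paper's centred two-variable kernel. Once those two identities are in hand, all of (ii) rests on
\[
\tfrac1{36}\mathcal V_{\mathrm{eff}}(\P;\xi)=T_6+4T_2-3T_3+4T_4-2T_5-4T_7 .
\]
You never prove this. You defer it as ``the main obstacle'', with only a vague plan to recast $T_4,T_5,T_6$ as $C_{12}$/$C_{22}$-type integrals.

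Your regrouping is also wrong. Subtracting $(1+\mathfrak q_d)(T_1-2T_2+T_3)+\mathfrak o_d(T_2-T_3)$ from $(1+\mathfrak q_d)T_1+(2-2\mathfrak q_d+\mathfrak o_d)T_2-(2-\mathfrak q_d+\mathfrak o_d)T_3$ leaves exactly $4T_2-3T_3$, with no $T_1$ term. The residual you propose to match carries a nonzero $T_1$ term (your ``$-T_1$'', or ``$3T_1+\ldots$''). Since $T_1>0$, the identity you would be trying to verify is false.

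The missing idea is to write the EIF directly in terms of the ingredients of the $T_j$. Use $m_\P(x,y)=\int\ind(t\le y)\{1-Q_t(x)\}\,\d\P_Y(t)$ and $\theta_\P(x)=\int\{1-Q_t(x)\}^2\,\d\P_Y(t)$. Substituting these into \eqref{eq:eif_chatterjee} and expanding gives
\[
\tfrac16\widetilde\xi_\P(X,Y)=h(Y)+2m_\P(X,Y)-\theta_\P(X)-2\Psi_\P,\qquad h(y):=1-2F_Y(y)+H_\P(y)=\E[\{1-Q_y(X)\}^2].
\]
Because $h(y)=\P(Y_2\wedge\widetilde Y_2\ge y)$, conditioning on $(X_1,Y_1)$ yields
\[
T_4=\E[h(Y)m_\P(X,Y)],\qquad T_5=\E[h(Y)\theta_\P(X)],\qquad T_6=\E[h(Y)^2].
\]
In addition, $T_7=\Psi_\P^2$, $\E[m_\P(X,Y)\theta_\P(X)]=T_3$, and $\E h(Y)=\E m_\P(X,Y)=\E\theta_\P(X)=\Psi_\P$. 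Expanding the square then gives the displayed identity, and (ii) follows.

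Your part (iii) is fine in outline. The diagonal argument for $\Delta_{1,\P}$ does need care, because $\{t=s\}$ is $\P_Y\otimes\P_Y$-null. The paper sidesteps this via $\iint R_\P(t,s;X)\,\d\P_Y(t)\d\P_Y(s)=\mathrm{Var}\{F_Y(Y)\mid X\}$ and Jensen, which gives $\Delta_{1,\P}\ge\E[\mathrm{Var}\{F_Y(Y)\mid X\}^2]$.
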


\subsection{Efficient estimation}
\label{sec:efficient_estimation}

We now construct efficient estimators of $\xi_\phi(\P)$ and $\Lambda_\phi(\P)$ in~\eqref{eq:rmd}, both of which reduce to $\xi(\P)$ when $\phi(u)=u^2$. Our construction follows the general cross-fitting strategy outlined in Section~\ref{sec:general-method}.

Recall that $\{\mathcal I_1,\ldots,\mathcal I_K\}$ is a deterministic partition of $\{1,\ldots,n\}$ into a fixed number $K\geq2$ of folds, with $n_k:=|\mathcal I_k|\asymp n$. For each fold $k$, we use only observations outside $\mathcal I_k$ to construct an estimator $\widehat Q_{k,t}(x)$ of $Q_t(x)$, truncated to $[0,1]$ if necessary. Let
\[
    \PP_{n,k}:=\frac1{n_k}\sum_{i\in\mathcal I_k}\delta_{Z_i}
\]
denote the empirical measure of the $k$th validation fold, with empirical marginals
\[
    \PP_{X,k}:=\frac1{n_k}\sum_{i\in\mathcal I_k}\delta_{X_i},
    \qquad
    \PP_{Y,k}:=\frac1{n_k}\sum_{i\in\mathcal I_k}\delta_{Y_i}.
\]
We also define the empirical cumulative distribution function (CDF) of the validation responses by
\[
    \widehat F_k(t):=\int\ind(y\leq t)\,\d\PP_{Y,k}(y).
\]
By construction, $\widehat Q_{k,t}$ is independent of the validation sample $\PP_{n,k}$, whereas $\widehat F_k$ and the empirical marginal measures are computed from that sample. As discussed earlier in Remark \ref{remark:general-dml} and Section \ref{sec:sobol-efficient-estimation}, this additional dependence is permitted in our construction and will be explicitly accounted for in the asymptotic analysis.

For the two classes of dependence measures, define the fold-specific plug-in numerators
\begin{align*}
    \widehat N^\xi_{\phi,k}
    &:=\int_{\R}\int_{\cX}\phi\{\widehat Q_{k,t}(x)-\widehat F_k(t)\}\,\d\PP_{X,k}(x)\d\PP_{Y,k}(t),\\
    \widehat N^\Lambda_{\phi,k}
    &:=\int_{\R}\int_{\cX^2}\phi\{\widehat Q_{k,t}(x_1)-\widehat Q_{k,t}(x_2)\}\,\d(\PP_{X,k}\otimes\PP_{X,k})(x_1,x_2)\d\PP_{Y,k}(t).
\end{align*}
The corresponding fold-specific plug-in estimators are
\[
    \widehat\xi_{\phi,k}^{\mathrm{plug}}:=\frac{\widehat N^\xi_{\phi,k}}{\alpha_\phi},
    \qquad
    \widehat\Lambda_{\phi,k}^{\mathrm{plug}}:=\frac{\widehat N^\Lambda_{\phi,k}}{\beta_\phi}.
\]

\paragraph*{The $\xi_\phi$ family.}
Motivated by the EIF in Theorem~\ref{thm:eif_generalized_chatterjee}(i), define
\begin{align*}
    \widehat A^\xi_{\phi,k}(y)
    &:=\int_{\cX}\phi\{\widehat Q_{k,y}(u)-\widehat F_k(y)\}\,\d\PP_{X,k}(u),\\
    \widehat B^\xi_{\phi,k}(x)
    &:=\int_{\R}\phi\{\widehat Q_{k,t}(x)-\widehat F_k(t)\}\,\d\PP_{Y,k}(t),\\
    \widehat C^\xi_{\phi,k}(x,y)
    &:=\int_{\R}\phi'\{\widehat Q_{k,t}(x)-\widehat F_k(t)\}
    \{\ind(y\leq t)-\widehat Q_{k,t}(x)\}\,\d\PP_{Y,k}(t),\\
    \widehat D^\xi_{\phi,k}(y)
    &:=-\int_{\R}\widehat J_{\phi,k}(t)
    \{\ind(y\leq t)-\widehat F_k(t)\}\,\d\PP_{Y,k}(t),
\end{align*}
where
\[
    \widehat J_{\phi,k}(t):=\int_{\cX}\phi'\{\widehat Q_{k,t}(u)-\widehat F_k(t)\}\,\d\PP_{X,k}(u).
\]
These quantities are empirical analogues of the corresponding components of the EIF. In particular, the use of the validation-fold empirical measures yields the exact identities
\[
    \PP_{n,k}\widehat A^\xi_{\phi,k}
    =\PP_{n,k}\widehat B^\xi_{\phi,k}
    =\widehat N^\xi_{\phi,k},
    \qquad
    \PP_{n,k}\widehat D^\xi_{\phi,k}=0.
\]
Consequently, the empirical centering terms cancel in the one-step construction, giving the cross-fitted estimator
\begin{align}
    \widehat\xi_{\phi,n}^{\mathrm{eff}}
    &:=\sum_{k=1}^K\frac{n_k}{n\alpha_\phi}
    \left\{\widehat N^\xi_{\phi,k}+\PP_{n,k}\widehat C^\xi_{\phi,k}\right\}=\frac1{n\alpha_\phi}\sum_{k=1}^K\sum_{i\in\mathcal I_k}
    \left\{\widehat N^\xi_{\phi,k}+\widehat C^\xi_{\phi,k}(X_i,Y_i)\right\}.
    \label{eq:xi_eff_explicit}
\end{align}

\paragraph*{The $\Lambda_\phi$ family.}
For the pairwise dependence measure, we similarly define
\begin{align*}
    \widehat A^\Lambda_{\phi,k}(y)
    &:=\int_{\cX^2}\phi\{\widehat Q_{k,y}(u)-\widehat Q_{k,y}(v)\}\,\d(\PP_{X,k}\otimes\PP_{X,k})(u,v),\\
    \widehat B^\Lambda_{\phi,k}(x)
    &:=\int_{\R}\int_{\cX}
    \Big[\phi\{\widehat Q_{k,t}(x)-\widehat Q_{k,t}(u)\}
    +\phi\{\widehat Q_{k,t}(u)-\widehat Q_{k,t}(x)\}\Big]
    \,\d\PP_{X,k}(u)\d\PP_{Y,k}(t),\\
    \widehat C^\Lambda_{\phi,k}(x,y)
    &:=\int_{\R}\int_{\cX}
    \Big[\phi'\{\widehat Q_{k,t}(x)-\widehat Q_{k,t}(u)\}
    -\phi'\{\widehat Q_{k,t}(u)-\widehat Q_{k,t}(x)\}\Big]\\
    &\qquad\qquad\times
    \{\ind(y\leq t)-\widehat Q_{k,t}(x)\}
    \,\d\PP_{X,k}(u)\d\PP_{Y,k}(t).
\end{align*}
As in the preceding construction, the empirical measures give rise to the identities
\[
    \PP_{n,k}\widehat A^\Lambda_{\phi,k}=\widehat N^\Lambda_{\phi,k},
    \qquad
    \PP_{n,k}\widehat B^\Lambda_{\phi,k}=2\widehat N^\Lambda_{\phi,k}.
\]
The corresponding cross-fitted one-step estimator therefore simplifies to
\begin{align}
    \widehat\Lambda_{\phi,n}^{\mathrm{eff}}
    &:=\sum_{k=1}^K\frac{n_k}{n\beta_\phi}
    \left\{\widehat N^\Lambda_{\phi,k}+\PP_{n,k}\widehat C^\Lambda_{\phi,k}\right\}=\frac1{n\beta_\phi}\sum_{k=1}^K\sum_{i\in\mathcal I_k}
    \left\{\widehat N^\Lambda_{\phi,k}+\widehat C^\Lambda_{\phi,k}(X_i,Y_i)\right\}.
    \label{eq:Lambda_eff_explicit}
\end{align}

\paragraph*{Chatterjee's coefficient.}
For the quadratic choice $\phi(u)=u^2$, the two population functionals coincide:
\[
    \xi_\phi(\P)=\Lambda_\phi(\P)=\xi(\P),
    \qquad
    \alpha_\phi=\frac16,\qquad \beta_\phi=\frac13.
\]
Recall from Corollary~\ref{cor:eif_chatterjee} that
\[
    \Psi_\P:=\int_{\R}\int_{\cX}Q_t(x)^2\,\d\P_X(x)\d\P_Y(t),
    \qquad
    \xi(\P)=6\Psi_\P-2.
\]
This alternative representation leads to a particularly simple estimator. For each fold $k$, define
\[
    \widehat H_k(t):=\int_{\cX}\widehat Q_{k,t}(x)^2\,\d\PP_{X,k}(x),
    \qquad
    \widehat\Psi_k:=\int_{\R}\widehat H_k(t)\,\d\PP_{Y,k}(t).
\]
The corresponding fold-specific plug-in estimator is
\[
    \widehat\xi_k^{\mathrm{plug}}:=6\widehat\Psi_k-2.
\]
Motivated by~\eqref{eq:eif_chatterjee}, we estimate the EIF by
\begin{align*}
    \widehat{\widetilde\xi}_k(x,y)
    :=6\Bigg[
    \widehat H_k(y)
    +\int_{\R}\left\{2\widehat Q_{k,t}(x)\ind(y\leq t)-\widehat Q_{k,t}(x)^2\right\}\,\d\PP_{Y,k}(t)
    -2\widehat\Psi_k
    \Bigg].
\end{align*}
Since $\PP_{n,k}\widehat H_k=\widehat\Psi_k$, the empirical centering terms cancel upon averaging the estimated EIF over the validation fold. Writing $k(i)$ for the fold containing observation $i$, the resulting cross-fitted one-step estimator is
\begin{align}
    \widehat\xi_n^{\mathrm{eff}}
    &:=\frac1n\sum_{i=1}^n
    \left\{\widehat\xi_{k(i)}^{\mathrm{plug}}
    +\widehat{\widetilde\xi}_{k(i)}(X_i,Y_i)\right\}\notag\\
    &=\frac6n\sum_{i=1}^n\int_{\R}
    \left\{2\widehat Q_{k(i),t}(X_i)\ind(Y_i\leq t)
    -\widehat Q_{k(i),t}(X_i)^2\right\}\,\d\PP_{Y,k(i)}(t)-2\notag\\
    &=\frac6n\sum_{k=1}^K\frac1{n_k}\sum_{i,j\in\mathcal I_k}
    \left\{2\widehat Q_{k,Y_j}(X_i)\ind(Y_i\leq Y_j)
    -\widehat Q_{k,Y_j}(X_i)^2\right\}-2.
    \label{eq:xi_eff_double_sum}
\end{align}
Thus, conditional on the training sample for each fold, its contribution to $\widehat\xi_n^{\mathrm{eff}}$ is a second-order $V$-statistic based on the corresponding validation observations. This representation will be particularly useful for establishing the asymptotic linearity of the estimator.

Although the general estimators in~\eqref{eq:xi_eff_explicit}--\eqref{eq:Lambda_eff_explicit} and the specialized estimator in~\eqref{eq:xi_eff_double_sum} target the same population coefficient when $\phi(u)=u^2$, they need not coincide in finite samples. Nevertheless, as shown below, all three estimators admit the same first-order asymptotic linear representation under suitable conditions.

\subsection{Efficiency theory}
\label{sec:efficient_estimation_theory}

We now establish the asymptotic efficiency of the cross-fitted estimators introduced in the preceding subsection. The key feature of our construction is that, for each fold $k$, the estimated conditional CDF $\widehat Q_k=\widehat Q_{k,\cdot}(\cdot)$ is independent of the validation sample $\PP_{n,k}$ and its empirical marginals $\PP_{X,k}$ and $\PP_{Y,k}$. This sample-splitting structure permits a conditional analysis without imposing Donsker conditions on the first-stage estimators. 

Such an approach is now widely used in causal inference and econometrics, particularly in the double/debiased machine learning framework \citep{chernozhukov2018double}. However, unlike standard cross-fitting constructions, our estimators also involve empirical marginal distributions computed from the validation folds. Their asymptotic analysis therefore requires accounting for the resulting within-fold dependence.

In detail, for $a\in\{\xi,\Lambda\}$, write
\[
    \theta_{\phi,a}(\P):=
    \begin{cases}
        \xi_\phi(\P), & ~~{\rm if}~a=\xi,\\
        \Lambda_\phi(\P), &  ~~{\rm if}~a=\Lambda,
    \end{cases}
\]
and let $\widetilde\theta_{\phi,a,\P}$ denote the corresponding EIF derived in Theorem~\ref{thm:eif_generalized_chatterjee}. For any measurable function $q=(q_t(x))_{x\in\cX,t\in\R}$ taking values in $[0,1]$, define
\[
    \|q-Q\|_{2,\P_X\otimes\P_Y}^2
    :=\int_{\R}\int_{\cX}\{q_t(x)-Q_t(x)\}^2\,\d\P_X(x)\d\P_Y(t).
\]

We impose the following assumptions on the sampling scheme, the function $\phi$, and the first-stage conditional-CDF estimators.

\begin{assumption}[Sampling and smoothness]
\label{ass:efficient_sampling}~~ \vspace{-.3cm}
\begin{enumerate}[itemsep=-.5ex,label=(\roman*)]
    \item The observations $Z_1,\ldots,Z_n$ are i.i.d.\ from $\P$.
    \item The marginal distribution of $Y$ is continuous.
    \item The function $\phi\in C^3([-1,1])$ is convex, strictly convex at zero, and satisfies $\phi(0)=0$.
\end{enumerate}
\end{assumption}

\begin{assumption}[Cross-fitting and first-stage estimation]
\label{ass:efficient_first_stage}
~~ \vspace{-.3cm}
\begin{enumerate}[itemsep=-.5ex,label=(\roman*)]
    \item The number of folds $K\geq2$ is fixed, and there exists a constant $c>0$ such that
    \[
        \min_{1\leq k\leq K}\frac{n_k}{n}\geq c
    \]
    for all sufficiently large $n$.

    \item For each fold $k$, the estimator $\widehat Q_k=\widehat Q_{k,\cdot}(\cdot)$ is measurable with respect to the observations $\{Z_i:i\notin\mathcal I_k\}$ and satisfies $0\leq\widehat Q_{k,t}(x)\leq1$. Moreover,
    \begin{equation}
        r_n:=\max_{1\leq k\leq K}\|\widehat Q_k-Q\|_{2,\P_X\otimes\P_Y}
        =o_\P(n^{-1/4}).
    \end{equation}
\end{enumerate}
\end{assumption}

The boundedness of the conditional-CDF estimators and the smoothness of $\phi$ allow the fold-specific statistics to be analyzed through conditional $V$-statistic expansions with controlled remainders. In particular, the independence between $\widehat Q_k$ and the corresponding validation observations permits us to condition on the first-stage estimator, while the dependence arising from the empirical marginal distributions is handled through the associated first-order projections. Together with the quadratic remainder in the first-stage estimation error, this yields the following result.

\begin{theorem}[Efficiency of the cross-fitted estimators]
\label{thm:general_efficient_estimator}
Suppose that Assumptions~\ref{ass:efficient_sampling} and~\ref{ass:efficient_first_stage} hold. Then, for each $a\in\{\xi,\Lambda\}$, the corresponding cross-fitted estimator satisfies
\begin{align}
    \widehat\theta_{\phi,a,n}^{\mathrm{eff}}-\theta_{\phi,a}(\P)
    &=\frac1n\sum_{i=1}^n\widetilde\theta_{\phi,a,\P}(Z_i)
    +O_\P\left(r_n^2+\frac{r_n}{\sqrt n}+\frac1n\right)\notag\\
    &=\frac1n\sum_{i=1}^n\widetilde\theta_{\phi,a,\P}(Z_i)+o_\P(n^{-1/2});
    \label{eq:AL_two_general}
\end{align}
consequently, both cross-fitted estimators are efficient in view of Theorem \ref{thm:efficiency_characterization}.
\end{theorem}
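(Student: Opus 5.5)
We will prove the statement fold by fold, conditioning on the training sample $\{Z_i:i\notin\mathcal I_k\}$ so that $q:=\widehat Q_k$ is a fixed $[0,1]$-valued function. Since $K$ is fixed and $n_k/n\ge c$, it suffices to show for each $k$ that
\[
\tfrac1{\alpha_\phi}\{\widehat N^\xi_{\phi,k}+\PP_{n,k}\widehat C^\xi_{\phi,k}\}-\theta_{\phi,\xi}(\P)=\PP_{n,k}\widetilde\theta_{\phi,\xi,\P}+O_\P(r_n^2+r_n n^{-1/2}+n^{-1}),
\]
and analogously for $\Lambda$. Weighting these fold-level identities by $n_k/n$ and summing then yields \eqref{eq:AL_two_general}. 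The second line of \eqref{eq:AL_two_general} follows because $r_n=o_\P(n^{-1/4})$.

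\textbf{Step 1 (conditional $V$-statistic representation).} For fixed $q$, the fold statistic for $\xi_\phi$ is a $V$-statistic in the validation observations. The only complication is that $\widehat F_k(t)=\PP_{Y,k}\ind(Y\le t)$ appears inside $\phi$ and $\phi'$. We therefore Taylor expand $\phi\{q_t(x)-\widehat F_k(t)\}$ around $\phi\{q_t(x)-F_Y(t)\}$ to second order; this uses $\phi\in C^3$. The quadratic terms are $O_\P(1/n)$ uniformly by the DKW inequality. The first-order term is $-\phi'\{q_t(x)-F_Y(t)\}\{\widehat F_k(t)-F_Y(t)\}$, which is itself a $V$-statistic.

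After this expansion, the fold statistic $S_k(q)$ equals a $V$-statistic of order at most three, with a bounded kernel $h_q$, plus $O_\P(n^{-1})$. The three arguments are the $x$-point, the $t$-point, and the point generating $\widehat F_k$ or the $\ind(y\le t)$ factor. Here boundedness of $\phi,\phi',\phi''$ on $[-1,1]$ and $0\le q\le1$ give uniform kernel bounds. The diagonal terms of the $V$-statistic are $O(1/n)$. The Hoeffding decomposition then gives
\[
S_k(q)=\Theta(q)+\PP_{n,k}g_q+O_\P(n^{-1}),
\]
where $\Theta(q)$ is the population mean of $h_q$ and $g_q$ is its first-order projection. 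The degenerate parts have conditional variance $O(n^{-2})$ uniformly in $q$, because the kernels are bounded.

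The $\Lambda_\phi$ case is identical, except that there is no $\widehat F_k$ and the $V$-statistic has order three in $(x_1,x_2,t)$.

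\textbf{Step 2 (empirical-process term).} We have $g_Q=\widetilde\theta_{\phi,a,\P}$. This is the content of the EIF computation in Theorem~\ref{thm:eif_generalized_chatterjee}: the projections of the $\phi$-term produce $A$, $B$ (and $D$, from the $\widehat F_k$ expansion), while the $C$-term has zero mean at $q=Q$ and its projection reproduces $C$. Next, Lipschitz continuity of $\phi,\phi'$ on $[-1,1]$ gives $\|g_q-g_Q\|_{L^2(\P)}\lesssim\|q-Q\|_{2,\P_X\otimes\P_Y}$. For instance, $\int(q_t(x)-Q_t(x))^2\,\d\P_Y(t)$ controls the $B$-type difference by Jensen's inequality. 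Conditionally on the training sample, Chebyshev's inequality then yields
\[
(\PP_{n,k}-\P)(g_q-g_Q)=O_\P(r_n n^{-1/2}).
\]
No Donsker condition is needed, because $q$ is independent of the fold.

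\textbf{Step 3 (second-order bias; main obstacle).} We must show $\Theta(q)-\theta_{\phi,a}(\P)=O(\|q-Q\|^2_{2,\P_X\otimes\P_Y})$, i.e. Neyman orthogonality in $q$. For $\xi_\phi$ the bias is
\[
\alpha_\phi^{-1}\iint\Big[\phi\{q_t-F_Y\}-\phi\{Q_t-F_Y\}+\phi'\{q_t-F_Y\}(Q_t-q_t)\Big]\,\d\P_X\,\d\P_Y.
\]
The $C$-term contributes $\phi'\{q_t-F_Y\}(Q_t-q_t)$ after integrating $\ind(y\le t)$ against $\P_{Y|X}$. The integrand is a Taylor remainder bounded by $\tfrac12\|\phi''\|_\infty(q_t-Q_t)^2$, which gives the claimed bound.

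For $\Lambda_\phi$ the same holds, using $\phi'(a)-\phi'(-a)$ symmetrized over $(x_1,x_2)$. The pair term linearizes as $\phi'\{q_t(x)-q_t(u)\}\{(Q-q)_t(x)-(Q-q)_t(u)\}$. Its remainder is quadratic in $(q-Q)_t(x)$ and $(q-Q)_t(u)$, and product moments are handled by Cauchy--Schwarz under $\P_X\otimes\P_X\otimes\P_Y$.

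The delicate bookkeeping lies in making the $\widehat F_k$ expansion of Step 1 consistent with the projections, so that the $D$-term emerges exactly and not only at $q=Q$. Any discrepancy between $g_q$ and the plug-in $D$ at $q\neq Q$ must be absorbed into Step 2's $O_\P(r_n n^{-1/2})$. I expect this to be the main obstacle, and will check it first.

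Combining Steps 1–3 gives the fold-level error $O_\P(r_n^2+r_n n^{-1/2}+n^{-1})$. Efficiency then follows from Theorem~\ref{thm:efficiency_characterization}.
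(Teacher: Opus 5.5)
Your proposal is correct and follows essentially the same route as the paper. Both arguments condition on the training fold and Taylor-expand in $\widehat F_k-F_Y$, using DKW for the quadratic remainder; the linear pieces from both $\phi$ and $\phi'$ form a mean-zero third-order kernel. Both then apply the Hoeffding decomposition to the resulting bounded $V$-statistics, identify the projection at $q=Q$ with the EIF, bound $\|\pi_q-\pi_Q\|_{L^2(\P)}\lesssim\|q-Q\|_{2,\P_X\otimes\P_Y}$, and use the Taylor remainder for the second-order bias.

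The ``main obstacle'' you flag is not actually one. The paper, like your Step~2, needs only that the explicit projection equals $\widetilde\theta_{\phi,a,\P}$ at $q=Q$ and is $L^2$-Lipschitz in $q$, so the $D$-term never has to be reproduced exactly at $q\neq Q$.
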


The same argument applies to the specialized estimator of Chatterjee's coefficient in~\eqref{eq:xi_eff_double_sum}, yielding the following corollary.

\begin{corollary}[Efficient estimation of Chatterjee's coefficient]
\label{cor:efficient_xi}
Suppose that Assumptions~\ref{ass:efficient_sampling} and~\ref{ass:efficient_first_stage} hold with $\phi(u)=u^2$. Then the estimator $\widehat\xi_n^{\mathrm{eff}}$ defined in~\eqref{eq:xi_eff_double_sum} satisfies
\[
    \widehat\xi_n^{\mathrm{eff}}-\xi(\P)
    =\frac1n\sum_{i=1}^n\widetilde\xi_\P(X_i,Y_i)+o_\P(n^{-1/2}),
\]
and hence is efficient in view of Theorem \ref{thm:efficiency_characterization}.
\end{corollary}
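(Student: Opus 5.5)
Work fold by fold, conditioning on the training data $\mathcal D_k:=\{Z_i:i\notin\mathcal I_k\}$ so that $q:=\widehat Q_k$ is a fixed $[0,1]$-valued function. By \eqref{eq:xi_eff_double_sum}, the contribution of fold $k$ is $6V_k-2$, where $V_k:=n_k^{-2}\sum_{i,j\in\mathcal I_k}g_q(Z_i,Z_j)$ is a second-order $V$-statistic with the bounded kernel
\[
g_q(z_1,z_2):=2q_{y_2}(x_1)\ind(y_1\le y_2)-q_{y_2}(x_1)^2 .
\]
Since $K$ is fixed and $n_k\asymp n$, it suffices to show
\[
6V_k-2-\xi(\P)=\frac1{n_k}\sum_{i\in\mathcal I_k}\widetilde\xi_\P(Z_i)+O_\P\Big(r_n^2+\frac{r_n}{\sqrt n}+\frac1n\Big).
\]
Averaging over $k$ with weights $n_k/n$ then gives the claim.

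\textbf{Step 1 (Hoeffding decomposition).} The diagonal terms $i=j$ contribute $O(1/n_k)$ because $g_q$ is bounded. For the off-diagonal part, apply the conditional Hoeffding decomposition. Set $\theta(q):=\E g_q(Z_1,Z_2)$ and
\[
g_{q,1}(z):=\E g_q(z,Z_2)+\E g_q(Z_1,z)-2\theta(q).
\]
The degenerate remainder has conditional variance $O(n_k^{-2})$ since the kernel is bounded, so it is $O_\P(1/n)$. Thus
\[
V_k=\theta(q)+\PP_{n,k}g_{q,1}+O_\P(1/n).
\]

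\textbf{Step 2 (bias is quadratic).} Using $\E[\ind(Y\le t)\mid X]=Q_t(X)$,
\[
\theta(q)=\int\!\!\int\{2q_tQ_t-q_t^2\}\,\d\P_X\,\d\P_Y=\Psi_\P-\|q-Q\|^2_{2,\P_X\otimes\P_Y}.
\]
This is the same Neyman-orthogonality identity as in the Sobol' example. Hence $6\theta(q)-2-\xi(\P)=-6\|q-Q\|^2=O_\P(r_n^2)$.

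\textbf{Step 3 (first-order term).} Compute the projections explicitly:
\[
\E g_q(z,Z_2)=\int\{2q_t(x)\ind(y\le t)-q_t(x)^2\}\,\d\P_Y(t),\qquad \E g_q(Z_1,z)=\int\{2q_y(u)Q_y(u)-q_y(u)^2\}\,\d\P_X(u).
\]
At $q=Q$ these become the second term and $H_\P(y)$, respectively, in \eqref{eq:eif_chatterjee}. Hence $6g_{Q,1}=\widetilde\xi_\P$, with centering $2\Psi_\P$. It remains to bound $(\PP_{n,k}-\P)(g_{q,1}-g_{Q,1})$. Conditionally on $\mathcal D_k$ this term has mean zero and variance at most $n_k^{-1}\P(g_{q,1}-g_{Q,1})^2$. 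The first piece is bounded by Jensen and boundedness:
\[
\Big(\int\{2(q_t-Q_t)\ind(y\le t)-(q_t^2-Q_t^2)\}\,\d\P_Y(t)\Big)^2\lesssim\int(q_t-Q_t)^2\,\d\P_Y(t),
\]
whose $\P_X$-integral is at most a constant times $r_n^2$. The second piece is $\int\{2q_yQ_y-q_y^2-Q_y^2\}\,\d\P_X=-\int(q_y-Q_y)^2\,\d\P_X$, which is likewise bounded in $L^2(\P_Y)$ by a constant times $r_n^2$, since $|q-Q|\le1$. Conditional Chebyshev then gives $O_\P(r_n/\sqrt n)$.

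\textbf{Step 4 (conclusion).} Combining Steps 1–3 gives the displayed expansion with remainder $O_\P(r_n^2+r_n/\sqrt n+1/n)$. Assumption~\ref{ass:efficient_first_stage}(ii) gives $r_n=o_\P(n^{-1/4})$, so this remainder is $o_\P(n^{-1/2})$. Efficiency follows from Theorem~\ref{thm:efficiency_characterization}.

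The main obstacle is Step 3, specifically identifying the second projection with $H_\P(y)$ and verifying that its deviation is controlled in $L^2$ by $r_n$ rather than merely in $L^1$. The conditional-on-training argument that converts moment bounds into stochastic orders must also be stated carefully, since the rate $r_n$ is itself random.
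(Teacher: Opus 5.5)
Your proposal is correct and follows the paper's proof essentially step by step. It uses the same bounded kernel (the paper's $H_q$, your $g_q$), the exact quadratic bias identity $\E H_q(Z,Z')=\Psi_\P-\|\delta\|^2_{2,\P_X\otimes\P_Y}$, the Hoeffding projection identified with $\widetilde\xi_\P/6$ at $q=Q$, and the bound $\|\pi_q-\pi_Q\|_{L^2(\P)}\lesssim\|\delta\|_{2,\P_X\otimes\P_Y}$ combined with conditional Chebyshev; your separate handling of the diagonal terms and of the random rate $r_n$ via conditioning on $\mathcal D_k$ only spells out details the paper leaves implicit.
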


\section{Discussion}
\label{sec:discussion}

We conclude by discussing three aspects of the proposed
methodology: statistical inference, independence testing,
and rank-based implementation.

\paragraph*{Statistical inference.}
The asymptotic linear representations established in this
paper provide a direct basis for statistical inference.
In particular, for a dependence measure $\theta(\P)$
with EIF $\widetilde\theta_\P$, the asymptotic variance
is given by
\[
\sigma_\P^2
=
\P\widetilde\theta_\P^2.
\]
A natural estimator of $\sigma_\P^2$ is the empirical
variance of the estimated influence functions,
evaluated on the corresponding validation folds.
Provided that these estimated influence functions are
consistent in the appropriate $L^2$ sense, the resulting
variance estimator is consistent, yielding
asymptotically valid Wald-type confidence intervals
whenever $\sigma_\P^2>0$ \citep{newey2018cross,chernozhukov2018double}.

Bootstrap inference provides another attractive
alternative. Recently, \cite{lin2026bootstrap} established
bootstrap consistency for a general class of
double/debiased machine learning estimators under
exchangeably weighted resampling schemes, including
Efron's bootstrap. We expect the corresponding
bootstrap consistency results to follow from suitable
extensions of the arguments developed in this paper
and those of \cite{lin2026bootstrap}.

\paragraph*{Independence testing.}
Although our primary focus is efficient estimation,
the proposed estimators can also be used to construct
tests of independence. Since the population dependence
measures considered here vanish under independence,
a natural procedure is to reject for large values
of the estimated dependence measure. Critical values
can be obtained through permutation by recomputing the complete
estimation procedure for each permutation, with the data folds
and any algorithmic randomness fixed independently of the responses.
Under independence, this gives finite-sample validity by exchangeability.

Nevertheless, semiparametric efficiency in estimation does not necessarily translate into favorable local power properties for independence testing. In particular, Chatterjee's original rank correlation has been shown to exhibit zero Pitman efficiency against broad classes of regular alternatives \citep{shi2020power,lin2021boosting}, and similar limitations have been identified for kernel-based correlations \citep{azadkia2026kernel}. Whether the proposed estimators exhibit analogous behavior remains an open question, whose resolution requires a separate analysis of their higher-order fluctuations under contiguous alternatives.

That said, recent proposals \citep{dong2025spectral,gao2026limit} seek to improve power by combining tests based on estimators of functional dependence with conventional independence tests that are sensitive to local alternatives in regular statistical models, such as those based on Hoeffding's $D$. The resulting combined tests can be calibrated to control type-I error either by deriving the joint asymptotic distribution of the constituent test statistics or by applying a conservative Bonferroni-type adjustment.

\paragraph*{Rank-based implementation.}
An additional appealing feature of our framework
is that, in the bivariate setting with continuous
marginal distributions, the proposed estimators
can be made fully rank-based.
Rather than estimating the conditional CDFs using the original observations, one
may construct the preliminary estimators entirely
from the normalized ranks of the covariates and
responses. To preserve the sample-splitting independence,
fold-specific rank transformations used in training should be
constructed from the training observations only and then applied
to the validation points. By ensuring that all tuning choices
are rank-invariant, the resulting
cross-fitted one-step estimator depends only on
the joint ranks of the observations. 
Related uses of rank transformations in matching and graph-based dependence estimation can be found in \citet{cattaneo2025rosenbaum} and \citet{tran2024rank}.

\section{Proofs}
\label{sec:proofs}

This section collects the proofs of the results stated in the main text.  Throughout the following, let $\|\cdot\|_{\mathrm{TV}}$ denote the total variation norm, and QMD stand for quadratic mean differentiability. Specifically, a parametric submodel $\{\P_\epsilon:\epsilon\in(-\delta,\delta)\}$ through $\P=\P_0$ is said to be QMD at $\epsilon=0$ with score $s\in L_0^2(\P)$ if
\[
\int\left(
\sqrt{p_\epsilon}-\sqrt p
-\frac{\epsilon}{2}s\sqrt p
\right)^2\,\d\mu=o(\epsilon^2),
\qquad {\rm as}~~\epsilon\to0,
\]
where $p_\epsilon$ and $p$ are the densities of $\P_\epsilon$ and $\P$, respectively, with respect to a common dominating measure $\mu$.

\subsection{Proofs for Section~\ref{sec:general-method}}

\begin{proof}[Proof of Theorem~\ref{thm:cf_onestep_efficiency}]

For each fold $k$, add and subtract $\P\widetilde\psi_{\QQ_k}$ and $(\PP_{n,k}-\P)\widetilde\psi_\P$ to obtain
\begin{align*}
&\psi(\QQ_k)+\PP_{n,k}\widetilde\psi_{\QQ_k}-\psi(\P)=(\PP_{n,k}-\P)\widetilde\psi_\P
 +(\PP_{n,k}-\P)
       (\widetilde\psi_{\QQ_k}-\widetilde\psi_\P)
 +\Rem(\QQ_k,\P),
\end{align*}
so that
\begin{align*}
\widehat\psi_n^{**}-\psi(\P)
=&(\PP_n-\P)\widetilde\psi_\P+\sum_{k=1}^K\frac{n_k}{n}(\PP_{n,k}-\P)
    (\widetilde\psi_{\QQ_k}-\widetilde\psi_\P)
 +\sum_{k=1}^K\frac{n_k}{n}\Rem(\QQ_k,\P).
\end{align*}

Conditional on the training observations outside $\mathcal I_k$, the function $\widetilde\psi_{\QQ_k}-\widetilde\psi_\P$ is fixed, and the observations in $\mathcal I_k$ are independent with distribution $\P$. Consequently,
\begin{align*}
&\E\!\left[
 \left\{(\PP_{n,k}-\P)
 (\widetilde\psi_{\QQ_k}-\widetilde\psi_\P)\right\}^2
 \Bigm|\{Z_i:i\notin\mathcal I_k\}\right]=\frac1{n_k}\Var_\P
 (\widetilde\psi_{\QQ_k}-\widetilde\psi_\P)
\le\frac1{n_k}
 \|\widetilde\psi_{\QQ_k}-\widetilde\psi_\P\|_{L^2(\P)}^2.
\end{align*}
Because $K$ is fixed, $n_k\asymp n$, and $\|\widetilde\psi_{\QQ_k}-\widetilde\psi_\P\|_{L^2(\P)}=o_\P(1)$, conditional Chebyshev's inequality shows that each empirical remainder is $o_\P(n^{-1/2})$.

Finally, by assumption, $\max_k|\Rem(\QQ_k,\P)|=o_\P(n^{-1/2})$. Since $K$ is fixed, summing the foldwise bounds gives
\[
\widehat\psi_n^{**}-\psi(\P)
=(\PP_n-\P)\widetilde\psi_\P+o_\P(n^{-1/2})
=\frac1n\sum_{i=1}^n\widetilde\psi_\P(Z_i)
 +o_\P(n^{-1/2}).
\]
Invoking Theorem \ref{thm:efficiency_characterization} then completes the proof.
\end{proof}

\begin{proof}[Proof of Theorem~\ref{thm:sobol-efficiency}]

Fix a fold $k$, condition on the observations outside $\mathcal I_k$, and write $\delta_k(x)=\widehat m_k(x)-m(x)$. Expanding the square and collecting terms yields
\begin{align*}
2\widehat m_k(X)Y-\widehat m_k(X)^2
&=2\{m(X)+\delta_k(X)\}Y-\{m(X)+\delta_k(X)\}^2\\
&=2m(X)Y-m(X)^2
  +2\delta_k(X)\{Y-m(X)\}-\delta_k(X)^2.
\end{align*}
Since $m(X)=\E[Y\mid X]$ and $\delta_k$ is fixed under the conditioning, we have
\[
\P[\delta_k(X)\{Y-m(X)\}]
=\P\big[\delta_k(X)\E\{Y-m(X)\mid X\}\big]=0.
\]
Moreover, $\E[Y^4]<\infty$ implies $\E[\{Y-m(X)\}^4]<\infty$ by conditional Jensen's inequality. By the conditional variance formula and Cauchy--Schwarz,
\begin{align*}
\E\!\left[
 \left\{\PP_{n,k}\delta_k(X)\{Y-m(X)\}\right\}^2
 \Bigm|\{Z_i:i\notin\mathcal I_k\}\right]&\le\frac1{n_k}\P[\delta_k(X)^2\{Y-m(X)\}^2]\\
&\le\frac1{n_k}\|\delta_k\|_{L^4(\P_X)}^2
 \|Y-m(X)\|_{L^4(\P)}^2.
\end{align*}
Similarly,
\begin{align*}
\P\delta_k(X)^2=\|\delta_k\|_{L^2(\P_X)}^2\quad\text{and}\quad
\E\!\left[
 \left\{(\PP_{n,k}-\P)\delta_k(X)^2\right\}^2
 \Bigm|\{Z_i:i\notin\mathcal I_k\}\right]
\le\frac1{n_k}\|\delta_k\|_{L^4(\P_X)}^4.
\end{align*}
The $L^4$ consistency and $L^2$ rate assumptions, together with conditional Chebyshev's inequality (using truncation to handle the random norms), imply
\[
\PP_{n,k}[\delta_k(X)\{Y-m(X)\}]
=o_\P(n^{-1/2}),\qquad
\PP_{n,k}\delta_k(X)^2=o_\P(n^{-1/2}).
\]
Since $K$ is fixed and $n_k\asymp n$, summing the exact quadratic identity over the folds gives
\begin{align*}
\sum_{k=1}^K\frac{n_k}{n}\PP_{n,k}
\{2\widehat m_k(X)Y-\widehat m_k(X)^2\}
&=\P\{2m(X)Y-m(X)^2\}
 +(\PP_n-\P)\{2m(X)Y-m(X)^2\}
 +o_\P(n^{-1/2})\\
&=\P m(X)^2
 +(\PP_n-\P)\{2m(X)Y-m(X)^2\}
 +o_\P(n^{-1/2}).
\end{align*}

Next, since $\widehat\mu_k=\PP_{n,k}Y$ and $\widehat\mu=\PP_nY=\sum_k(n_k/n)\widehat\mu_k$, we have
\begin{align*}
\sum_{k=1}^K\frac{n_k}{n}\widehat\mu_k^2
&=\widehat\mu^2+
 \sum_{k=1}^K\frac{n_k}{n}
 (\widehat\mu_k-\widehat\mu)^2=\mu^2+2\mu(\PP_n-\P)Y+O_\P(n^{-1}).
\end{align*}
The last equality follows because $\widehat\mu_k-\mu=O_\P(n_k^{-1/2})$ and $\widehat\mu-\mu=O_\P(n^{-1/2})$. For the pooled variance estimator, similarly,
\begin{align*}
\widehat v-v
&=\PP_n(Y-\widehat\mu)^2-\P(Y-\mu)^2=(\PP_n-\P)(Y-\mu)^2-(\widehat\mu-\mu)^2=(\PP_n-\P)(Y-\mu)^2+O_\P(n^{-1}).
\end{align*}
Because $v>0$, we then obtain
\[
\frac1{\widehat v}
=\frac1v-\frac{\widehat v-v}{v^2}
  +o_\P(n^{-1/2}).
\]
Substituting these expansions into~\eqref{eq:sobol-eff-final} gives
\begin{align*}
\widehat\rho_{\mathrm{Sob},n}^{\mathrm{eff}}
-\rho_{\mathrm{Sob}}(\P)
=\frac1v(\PP_n-\P)\Big[
&2m(X)Y-m(X)^2-2\mu Y-\rho_{\mathrm{Sob}}(\P)(Y-\mu)^2
\Big]+o_\P(n^{-1/2}).
\end{align*}
Observe that
\begin{align*}
&2m(x)y-m(x)^2-2\mu y
 -\rho_{\mathrm{Sob}}(\P)(y-\mu)^2\\
 =&2\{m(x)-\mu\}\{y-m(x)\}
  +\{m(x)-\mu\}^2
  -\rho_{\mathrm{Sob}}(\P)(y-\mu)^2-\mu^2.
\end{align*}
The constant $-\mu^2$ disappears under $\PP_n-\P$. It follows that
\[
\widehat\rho_{\mathrm{Sob},n}^{\mathrm{eff}}-\rho_{\mathrm{Sob}}(\P)
=\frac1n\sum_{i=1}^n
 \widetilde\rho_{\mathrm{Sob},\P}(X_i,Y_i)+o_\P(n^{-1/2}),
\]
as claimed. 
\end{proof}

\subsection{Proofs for Section~\ref{sec:classical}}

\begin{proof}[Proof of Proposition~\ref{prop:u_eif}]

Fix a bounded score $s\in L_0^2(\P)$ and consider $\d\P_\epsilon=(1+\epsilon s)\d\P$. Expanding along this submodel gives
\begin{align*}
\psi_h(\P_\epsilon)
&=\E\!\left[h(Z_1,\ldots,Z_m)
 \prod_{j=1}^m\{1+\epsilon s(Z_j)\}\right]=\psi_h(\P)
 +\epsilon\sum_{j=1}^m
 \E\{h(Z_1,\ldots,Z_m)s(Z_j)\}+O(\epsilon^2).
\end{align*}
The $O(\epsilon^2)$ remainder follows from the finite product expansion: each higher-order term contains at least two factors of $\epsilon$, and is integrable by Cauchy--Schwarz, since $h\in L^2(\P^{\otimes m})$ and $s$ is bounded. By symmetry of $h$, each of the $m$ terms in the sum is equal to
\begin{align*}
\E\{h(Z_1,\ldots,Z_m)s(Z_1)\}
&=\E\!\left[s(Z_1)\E\{h(Z_1,Z_2,\ldots,Z_m)\mid Z_1\}\right]=\P\!\left[s\{h_{1,\P}+\psi_h(\P)\}\right]
=\P[s h_{1,\P}],
\end{align*}
where the last equality uses $\P s=0$. Thus
\[
\left.\frac{\d}{\d\epsilon}\psi_h(\P_\epsilon)\right|_{\epsilon=0}
=\P\{m h_{1,\P}s\}.
\]
By iterated expectation, $\P h_{1,\P}=0$, and conditional Jensen's inequality gives
\[
\P h_{1,\P}^2
\le\E h(Z_1,\ldots,Z_m)^2<\infty.
\]
By the assumed pathwise differentiability of $\psi_h$, its EIF $\widetilde\psi_{h,\P}\in L_0^2(\P)$ exists. In particular, for every bounded mean-zero score $s$, the defining property of the EIF and the derivative identity established above imply
\[
\P(\widetilde\psi_{h,\P}s)
=
\left.\frac{\d}{\d\epsilon}\psi_h(\P_\epsilon)\right|_{\epsilon=0}
=
\P(mh_{1,\P}s),
\]
and accordingly
\[
\P\left[\{\widetilde\psi_{h,\P}-mh_{1,\P}\}s\right]=0
\]
for every bounded $s\in L_0^2(\P)$. Since bounded mean-zero functions are dense in $L_0^2(\P)$ and both $\widetilde\psi_{h,\P}$ and $mh_{1,\P}$ belong to this space, the preceding identity extends by continuity to every $s\in L_0^2(\P)$, and thus $\widetilde\psi_{h,\P}(z)=mh_{1,\P}(z)$ almost surely, as claimed.
\end{proof}

\begin{proof}[Proof of Proposition~\ref{prop:u_efficiency}]

The Hoeffding decomposition of the fixed-order symmetric kernel $h$ yields
\[
U_n(h)-\psi_h(\P)
=\frac{m}{n}\sum_{i=1}^n h_{1,\P}(Z_i)
 +\sum_{j=2}^m {m\choose j}U_{n,j}(h_{j,\P})
=\frac1n\sum_{i=1}^n\widetilde\psi_{h,\P}(Z_i)
 +o_\P(n^{-1/2}),
\]
and the assertion follows from Theorem \ref{thm:efficiency_characterization}.
\end{proof}

\begin{proof}[Proof of Corollary~\ref{cor:classical_efficiency}]
(i) The population versions of Kendall's tau, Spearman's rho, Hoeffding's $D$, Blum--Kiefer--Rosenblatt's $R$, and Bergsma--Dassios--Yanagimoto's $\tau^*$ admit representations of the form~\eqref{eq:u-md}, with bounded symmetric kernels of fixed orders. We show that the boundedness of these kernels guarantees pathwise differentiability in the unrestricted nonparametric model.

Let $h$ be any bounded symmetric kernel of order $m$, and consider an arbitrary one-dimensional QMD submodel $\{\P_\epsilon\}$ through $\P$, with score $s\in L_0^2(\P)$. QMD implies the signed-measure expansion
\[
    \|\P_\epsilon-\P-\epsilon s\P\|_{\mathrm{TV}}=o(|\epsilon|),
\]
where $s\P$ denotes the signed measure with density $s$ with respect to $\P$. Since $m$ is fixed, the corresponding product measures satisfy
\[
    \P_\epsilon^{\otimes m}
    =
    \P^{\otimes m}
    +
    \epsilon\sum_{j=1}^m
    \P^{\otimes(j-1)}\otimes(s\P)\otimes\P^{\otimes(m-j)}
    +
    o_{\mathrm{TV}}(|\epsilon|).
\]
Because $h$ is bounded, integration against $h$ preserves this expansion. Consequently,
\[
\begin{aligned}
    \psi_h(\P_\epsilon)-\psi_h(\P)
    =
    \epsilon\sum_{j=1}^m
    \E_\P\{h(Z_1,\ldots,Z_m)s(Z_j)\}
    +o(|\epsilon|)=
    \epsilon m\,\P(h_{1,\P}s)+o(|\epsilon|),
\end{aligned}
\]
where the second equality follows from the symmetry of $h$ and $\P s=0$. Since $h$ is bounded, $mh_{1,\P}\in L_0^2(\P)$. Thus, the derivative identity holds along every QMD submodel and defines a bounded linear functional of its score. It follows that $\psi_h$ is pathwise differentiable at $\P$. By Proposition~\ref{prop:u_eif}, its EIF is $mh_{1,\P}$.

(ii) The population versions of squared distance covariance and HSIC admit the U-statistic representations. Under the assumed square-integrability of their kernels and pathwise differentiability of their population functionals, Proposition~\ref{prop:u_eif} identifies their EIFs as the corresponding first-order Hoeffding projections multiplied by the kernel orders. Proposition~\ref{prop:u_efficiency} then implies that their unbiased U-statistic estimators are semiparametrically efficient.

Finally, when the kernels defining HSIC are bounded, its associated fourth-order U-statistic kernel is also bounded. The argument used in part~(i) therefore establishes pathwise differentiability automatically, while square-integrability is immediate. This completes the proof.
\end{proof}

\subsection{Proofs for Section~\ref{sec:conditional_distribution}}

\begin{proof}[Proof of Proposition~\ref{prop:normalizing_constants}]
Let $Y_0,Y_1,Y_2$ be independent with marginal law $\P_Y$. Because $F_Y$ is continuous, $U=F_Y(Y_0)$ is uniform on $[0,1]$. Conditionally on $Y_0$ (and hence on $U=u$), $\ind(Y_1\le Y_0)$ is Bernoulli with success probability $u$. Consequently,
\begin{align*}
\alpha_\phi
&=\E\phi\{\ind(Y_1\le Y_0)-F_Y(Y_0)\}=\int_0^1\{u\phi(1-u)+(1-u)\phi(-u)\}\,\d u.
\end{align*}
Changing variables $v=1-u$ in the first summand gives
\[
\int_0^1u\phi(1-u)\,\d u
=\int_0^1(1-v)\phi(v)\,\d v,
\]
and hence
\[
\alpha_\phi
=\int_0^1(1-u)\{\phi(u)+\phi(-u)\}\,\d u.
\]
For $\beta_\phi$, conditionally on $U=u$, the two indicators $\ind(Y_1\le Y_0)$ and $\ind(Y_2\le Y_0)$ are independent Bernoulli$(u)$ variables. Their difference is $1$ with probability $u(1-u)$, is $-1$ with the same probability, and is $0$ otherwise. Using $\phi(0)=0$ gives
\begin{align*}
\beta_\phi
&=\int_0^1u(1-u)\{\phi(1)+\phi(-1)\}\,\d u=\frac{\phi(1)+\phi(-1)}6.
\end{align*}
For $\phi(u)=u^2$, these formulas give $\alpha_\phi=2\int_0^1(1-u)u^2\,\d u=1/6$ and $\beta_\phi=2/6=1/3$.
\end{proof}

\begin{proof}[Proof of Theorem~\ref{thm:eif_generalized_chatterjee}]
We first consider a bounded score $s\in L_0^2(\P)$ and the corresponding submodel $\d\P_\epsilon=(1+\epsilon s)\d\P$ for sufficiently small $|\epsilon|$.  Write $Q_{t,\epsilon}(x)$ and $F_{Y,\epsilon}(t)$ for the conditional and marginal CDFs under $\P_\epsilon$.

We first compute the derivatives of the conditional and marginal CDFs. By conditioning on $X$,
\begin{align*}
Q_{t,\epsilon}(x)
&=
\frac{\E[\ind(Y\le t)(1+\epsilon s(Z))\mid X=x]}
     {\E[1+\epsilon s(Z)\mid X=x]}=
\frac{Q_t(x)+\epsilon\E[\ind(Y\le t)s(Z)\mid X=x]}
     {1+\epsilon\E[s(Z)\mid X=x]}.
\end{align*}
Subtracting $Q_t(x)$ and dividing by $\epsilon$ gives
\[
\frac{Q_{t,\epsilon}(x)-Q_t(x)}{\epsilon}
=
\frac{\E[\{\ind(Y\le t)-Q_t(x)\}s(Z)\mid X=x]}
     {1+\epsilon\E[s(Z)\mid X=x]}.
\]
Hence
\begin{equation*}
\dot Q_t(x)
=
\E[\{\ind(Y\le t)-Q_t(x)\}s(Z)\mid X=x].
\end{equation*}
Similarly,
\begin{align*}
F_{Y,\epsilon}(t)
&=\P[(1+\epsilon s(Z))\ind(Y\le t)]=F_Y(t)+\epsilon\P[\{\ind(Y\le t)-F_Y(t)\}s(Z)],
\end{align*}
where the last equality uses $\P s=0$.  Therefore,
\[
\dot F_Y(t)
=\P[\{\ind(Y\le t)-F_Y(t)\}s(Z)].
\]
Finally, for any bounded measurable $f$ and $g$,
\begin{align*}
\left.\frac{\d}{\d\epsilon}\P_\epsilon f(X)\right|_{\epsilon=0}
&=\P[f(X)s(Z)]\qquad\text{and}\qquad\left.\frac{\d}{\d\epsilon}\P_\epsilon g(Y)\right|_{\epsilon=0}
=\P[g(Y)s(Z)].
\end{align*}
These identities account for the perturbations of the integration measures in the calculations below.

\medskip
\noindent\emph{Part (i): the $\xi_\phi$ family.}
By definition,
\[
\alpha_\phi\xi_\phi(\P_\epsilon)
=
\int_{\R}\int_{\cX}
\phi\{Q_{t,\epsilon}(x)-F_{Y,\epsilon}(t)\}
\,\d\P_{X,\epsilon}(x)\d\P_{Y,\epsilon}(t).
\]
Differentiating first with respect to the two integration measures and then with respect to the integrand gives the following four terms:
\begin{align*}
\left.\frac{\d}{\d\epsilon}
\{\alpha_\phi\xi_\phi(\P_\epsilon)\}\right|_{\epsilon=0}
={}&
\P\!\left[s(Z)\int_{\cX}\phi\{\Delta_Y(u)\}
\,\d\P_X(u)\right]+\P\!\left[s(Z)\int_{\R}\phi\{\Delta_t(X)\}
\,\d\P_Y(t)\right]\\
&+\int_{\R}\int_{\cX}\phi'\{\Delta_t(x)\}\dot Q_t(x)
\,\d\P_X(x)\d\P_Y(t)-\int_{\R}\int_{\cX}\phi'\{\Delta_t(x)\}\dot F_Y(t)
\,\d\P_X(x)\d\P_Y(t).
\end{align*}
By the definitions in the theorem, the first two terms are $\P\{A_{\phi,\P}^{\xi}(Y)s(Z)\}$ and $\P\{B_{\phi,\P}^{\xi}(X)s(Z)\}$.  For the third term, substitute the expression for $\dot Q_t(x)$ and apply the tower property:
\begin{align*}
\int_{\R}\int_{\cX}
\phi'\{\Delta_t(x)\}\dot Q_t(x)
\,\d\P_X(x)\d\P_Y(t)
&=
\int_{\R}\E\!\left[
\phi'\{\Delta_t(X)\}
\{\ind(Y\le t)-Q_t(X)\}s(Z)
\right]\d\P_Y(t)\\
&=
\P\!\left[s(Z)
\int_{\R}\phi'\{\Delta_t(X)\}
\{\ind(Y\le t)-Q_t(X)\}\d\P_Y(t)
\right]\\
&=\P\{C_{\phi,\P}^{\xi}(Z)s(Z)\}.
\end{align*}
For the fourth term, use the definition of $\overline\phi_\P(t)$ and the formula for $\dot F_Y(t)$ to obtain
\begin{align*}
-\int_{\R}\int_{\cX}
\phi'\{\Delta_t(x)\}\dot F_Y(t)
\,\d\P_X(x)\d\P_Y(t)&=-\int_{\R}\overline\phi_\P(t)
\P[\{\ind(Y\le t)-F_Y(t)\}s(Z)]\d\P_Y(t)\\
&=\P\!\left[s(Z)
\left\{-\int_{\R}\overline\phi_\P(t)
\{\ind(Y\le t)-F_Y(t)\}\d\P_Y(t)\right\}
\right]\\
&=\P\{D_{\phi,\P}^{\xi}(Y)s(Z)\}.
\end{align*}
Combining the four identities yields
\[
\left.\frac{\d}{\d\epsilon}
\{\alpha_\phi\xi_\phi(\P_\epsilon)\}\right|_{\epsilon=0}
=\P\!\left[s(Z)
\{A_{\phi,\P}^{\xi}(Y)+B_{\phi,\P}^{\xi}(X)
+C_{\phi,\P}^{\xi}(Z)+D_{\phi,\P}^{\xi}(Y)\}
\right].
\]
The centering constants follow from Fubini's theorem and conditional centering:
\begin{align*}
\P A_{\phi,\P}^{\xi}(Y)
&=\int_{\R}\int_{\cX}\phi\{\Delta_t(x)\}
\,\d\P_X(x)\d\P_Y(t)
=\alpha_\phi\xi_\phi(\P),\\
\P B_{\phi,\P}^{\xi}(X)
&=\int_{\cX}\int_{\R}\phi\{\Delta_t(x)\}
\,\d\P_Y(t)\d\P_X(x)
=\alpha_\phi\xi_\phi(\P),\\
\P C_{\phi,\P}^{\xi}(Z)
&=\int_{\R}\P\!\left[
\phi'\{\Delta_t(X)\}
\E[\ind(Y\le t)-Q_t(X)\mid X]
\right]\d\P_Y(t)=0,\\
\P D_{\phi,\P}^{\xi}(Y)
&=-\int_{\R}\overline\phi_\P(t)
\{\P\ind(Y\le t)-F_Y(t)\}\d\P_Y(t)=0.
\end{align*}
Thus, as $\P s=0$, subtracting $2\alpha_\phi\xi_\phi(\P)$ from the expression inside the expectation does not change the derivative. Moreover, Proposition~\ref{prop:normalizing_constants} implies that $\alpha_\phi$ is constant along the submodel. We conclude that
\begin{align*}
\left.\frac{\d}{\d\epsilon}\xi_\phi(\P_\epsilon)
\right|_{\epsilon=0}
=\P\!\left[s(Z)\frac{
A_{\phi,\P}^{\xi}(Y)+B_{\phi,\P}^{\xi}(X)
+C_{\phi,\P}^{\xi}(Z)+D_{\phi,\P}^{\xi}(Y)
-2\alpha_\phi\xi_\phi(\P)}{\alpha_\phi}\right],
\end{align*}
which establishes~\eqref{eq:eif_xi_phi}.

\medskip
\noindent\emph{Part (ii): the $\Lambda_\phi$ family.}
Here we begin with
\[
\beta_\phi\Lambda_\phi(\P_\epsilon)
=
\int_{\R}\int_{\cX^2}
\phi\{Q_{t,\epsilon}(x)-Q_{t,\epsilon}(u)\}
\,\d\P_{X,\epsilon}(x)\d\P_{X,\epsilon}(u)
\d\P_{Y,\epsilon}(t).
\]
Differentiating the threshold measure gives
\begin{align*}
&\P\!\left[s(Z)\int_{\cX^2}
\phi\{Q_Y(u)-Q_Y(v)\}
\d\P_X(u)\d\P_X(v)\right]
=\P\{A_{\phi,\P}^{\Lambda}(Y)s(Z)\}.
\end{align*}
Differentiating the first covariate measure gives
\[
\P\!\left[s(Z)\int_{\R}\int_{\cX}
\phi\{Q_t(X)-Q_t(u)\}\d\P_X(u)\d\P_Y(t)\right],
\]
whereas differentiating the second one gives
\[
\P\!\left[s(Z)\int_{\R}\int_{\cX}
\phi\{Q_t(u)-Q_t(X)\}\d\P_X(u)\d\P_Y(t)\right].
\]
Their sum is precisely $\P\{B_{\phi,\P}^{\Lambda}(X)s(Z)\}$.

It remains to differentiate the two conditional CDFs in the integrand. This yields
\begin{align*}
&\int_{\R}\int_{\cX^2}
\phi'\{Q_t(x)-Q_t(u)\}
\{\dot Q_t(x)-\dot Q_t(u)\}
\d\P_X(x)\d\P_X(u)\d\P_Y(t)\\
=&\int_{\R}\int_{\cX^2}
\phi'\{Q_t(x)-Q_t(u)\}\dot Q_t(x)
\d\P_X(x)\d\P_X(u)\d\P_Y(t)\\
&\qquad-\int_{\R}\int_{\cX^2}
\phi'\{Q_t(x)-Q_t(u)\}\dot Q_t(u)
\d\P_X(x)\d\P_X(u)\d\P_Y(t)\\
=&\int_{\R}\int_{\cX^2}
\Big[\phi'\{Q_t(x)-Q_t(u)\}
-\phi'\{Q_t(u)-Q_t(x)\}\Big]\dot Q_t(x)
\d\P_X(x)\d\P_X(u)\d\P_Y(t).
\end{align*}
The last equality follows by exchanging $x$ and $u$ in the second integral.  Substitution of the conditional-CDF derivative, followed by the tower property, gives
\begin{align*}
&\int_{\R}\int_{\cX^2}
\Big[\phi'\{Q_t(x)-Q_t(u)\}
-\phi'\{Q_t(u)-Q_t(x)\}\Big]\dot Q_t(x)
\d\P_X(x)\d\P_X(u)\d\P_Y(t)\\
=&\P\!\left[s(Z)
\int_{\R}\int_{\cX}
\Big[\phi'\{Q_t(X)-Q_t(u)\}
-\phi'\{Q_t(u)-Q_t(X)\}\Big]
\{\ind(Y\le t)-Q_t(X)\}
\d\P_X(u)\d\P_Y(t)\right]\\
=&\P\{C_{\phi,\P}^{\Lambda}(Z)s(Z)\}.
\end{align*}
Combining these contributions gives
\[
\left.\frac{\d}{\d\epsilon}
\{\beta_\phi\Lambda_\phi(\P_\epsilon)\}\right|_{\epsilon=0}
=\P\!\left[s(Z)
\{A_{\phi,\P}^{\Lambda}(Y)
+B_{\phi,\P}^{\Lambda}(X)
+C_{\phi,\P}^{\Lambda}(Z)\}\right].
\]
To center the resulting expression, note that
\begin{align*}
\P A_{\phi,\P}^{\Lambda}(Y)
&=\int_{\R}\int_{\cX^2}
\phi\{Q_t(x)-Q_t(u)\}
\d\P_X(x)\d\P_X(u)\d\P_Y(t)
=\beta_\phi\Lambda_\phi(\P),\\
\P B_{\phi,\P}^{\Lambda}(X)
&=\int_{\R}\int_{\cX^2}
\big[\phi\{Q_t(x)-Q_t(u)\}
+\phi\{Q_t(u)-Q_t(x)\}\big]
\d\P_X(x)\d\P_X(u)\d\P_Y(t)=2\beta_\phi\Lambda_\phi(\P),\\
\P C_{\phi,\P}^{\Lambda}(Z)
&=\int_{\R}\int_{\cX}
\Big[\phi'\{Q_t(x)-Q_t(u)\}
-\phi'\{Q_t(u)-Q_t(x)\}\Big]\\
&\qquad\times\E[\ind(Y\le t)-Q_t(X)\mid X=x]
\d\P_X(x)\d\P_X(u)\d\P_Y(t)=0.
\end{align*}
Thus, subtracting $3\beta_\phi\Lambda_\phi(\P)$ from the expression inside the expectation leaves the derivative unchanged. Since $\beta_\phi$ is constant along the submodel by Proposition~\ref{prop:normalizing_constants}, we obtain
\begin{align*}
\left.\frac{\d}{\d\epsilon}\Lambda_\phi(\P_\epsilon)
\right|_{\epsilon=0}
=\P\!\left[s(Z)\frac{
A_{\phi,\P}^{\Lambda}(Y)+B_{\phi,\P}^{\Lambda}(X)
+C_{\phi,\P}^{\Lambda}(Z)
-3\beta_\phi\Lambda_\phi(\P)}{\beta_\phi}\right],
\end{align*}
which is~\eqref{eq:eif_lambda_phi}.


\vspace{0.2cm}

It remains to establish the pathwise differentiability
of $\xi_\phi$ and $\Lambda_\phi$ along arbitrary QMD
submodels. Although the preceding calculations were
carried out for bounded scores, we now show that the
derivative identities in~\eqref{eq:eif_xi_phi}
and~\eqref{eq:eif_lambda_phi} continue to hold
without this restriction.

Let $\{\P_\epsilon\}$ be an arbitrary QMD submodel
through $\P$, with score $s\in L_0^2(\P)$, and retain
the notation introduced above. Since $\cX$ and $\R$
are standard Borel spaces, we fix jointly measurable
versions of the conditional distribution functions
$Q_t(x)$ and $Q_{t,\epsilon}(x)$, defined for all
$(x,t)\in\cX\times\R$. These versions may be
specified arbitrarily on sets of zero measure
under their respective $X$-marginals. For convenience, write
\[
\delta_{t,\epsilon}(x)
:=Q_{t,\epsilon}(x)-Q_t(x),
\qquad
\eta_\epsilon(t)
:=F_{Y,\epsilon}(t)-F_Y(t).
\]
By QMD,
\begin{equation}
\label{eq:qmd-tv}
\|\P_\epsilon-\P-\epsilon s\P\|_{\mathrm{TV}}
=o(|\epsilon|).
\end{equation}
In particular,
\[
\|\P_\epsilon-\P\|_{\mathrm{TV}}
=O(|\epsilon|),
\qquad
\sup_{t\in\R}|\eta_\epsilon(t)|=O(|\epsilon|).
\]

We next establish a uniform $L^2$ bound for the
conditional CDFs. Write
\[
H^2(P,Q)
:=
\int(\sqrt{\d P}-\sqrt{\d Q})^2
\]
for the squared Hellinger distance, where the
integral is taken with respect to any common
dominating measure. QMD implies
\[
H^2(\P_\epsilon,\P)
=
\frac{\epsilon^2}{4}\P s^2+o(\epsilon^2)
=O(\epsilon^2).
\]

Fix $t\in\R$, and let $M_t$ and $M_{t,\epsilon}$
denote the distributions of $(X,\ind(Y\le t))$
under $\P$ and $\P_\epsilon$, respectively.
By conditioning on the binary variable
$\ind(Y\le t)$ and expanding the definition
of Hellinger distance, we obtain the identity
\begin{align*}
H^2(M_{t,\epsilon},M_t)=&
H^2(\P_{X,\epsilon},\P_X)+
\int_{\cX}
\Big[
\Big\{\sqrt{Q_{t,\epsilon}(x)}-\sqrt{Q_t(x)}\Big\}^2\\
&\qquad\qquad+
\Big\{\sqrt{1-Q_{t,\epsilon}(x)}
-\sqrt{1-Q_t(x)}\Big\}^2
\Big]
\sqrt{\d\P_X\,\d\P_{X,\epsilon}}.
\end{align*}
Standard algebra gives
\begin{align*}
&\Big\{Q_{t,\epsilon}(x)-Q_t(x)\Big\}^2\le
\Big\{\sqrt{Q_{t,\epsilon}(x)}-\sqrt{Q_t(x)}\Big\}^2
+
\Big\{\sqrt{1-Q_{t,\epsilon}(x)}-
\sqrt{1-Q_t(x)}\Big\}^2.
\end{align*}
Consequently, using
\[
\d\P_X+\d\P_{X,\epsilon}
=
2\sqrt{\d\P_X\,\d\P_{X,\epsilon}}
+
(\sqrt{\d\P_X}-\sqrt{\d\P_{X,\epsilon}})^2,
\]
we obtain
\begin{align*}
&\int_{\cX}
\delta_{t,\epsilon}(x)^2
\,\d(\P_X+\P_{X,\epsilon})(x)\\
\le&
2\int_{\cX}
\Big[
\Big\{\sqrt{Q_{t,\epsilon}(x)}-\sqrt{Q_t(x)}\Big\}^2+
\Big\{\sqrt{1-Q_{t,\epsilon}(x)}
-\sqrt{1-Q_t(x)}\Big\}^2
\Big]
\sqrt{\d\P_X\,\d\P_{X,\epsilon}}\\
&~~~+\int_{\cX}
(\sqrt{\d\P_X}-\sqrt{\d\P_{X,\epsilon}})^2\\
\le&
2H^2(M_{t,\epsilon},M_t)
\le 2H^2(\P_\epsilon,\P),
\end{align*}
where the last step follows from the
data-processing inequality. 
Since the bound is independent of $t$,
QMD implies
\begin{equation}
\label{eq:qmd-Q-bound}
\sup_{t\in\R}
\int_{\cX}\delta_{t,\epsilon}(x)^2
\,\d(\P_X+\P_{X,\epsilon})(x)
=O(\epsilon^2).
\end{equation}
By Cauchy--Schwarz,
\eqref{eq:qmd-Q-bound} further yields
\begin{equation}
\label{eq:qmd-Q-L1}
\sup_{t\in\R}
\int_{\cX}|\delta_{t,\epsilon}(x)|
\,\d\P_{X,\epsilon}(x)
=O(|\epsilon|).
\end{equation}

A further useful identity is that, for every
bounded measurable function $g:\cX\to\R$
and every fixed $t\in\R$,
\begin{equation}
\label{eq:qmd-Q-identity}
\begin{aligned}
&\int_{\cX}g(x)\delta_{t,\epsilon}(x)
\,\d\P_{X,\epsilon}(x)=
(\P_\epsilon-\P)
\bigl[g(X)\{\ind(Y\le t)-Q_t(X)\}\bigr].
\end{aligned}
\end{equation}
Indeed, by the defining property of
$Q_{t,\epsilon}$,
\begin{align*}
&\P_\epsilon
\bigl[g(X)\{\ind(Y\le t)-Q_t(X)\}\bigr]=
\int_{\cX}g(x)
\{Q_{t,\epsilon}(x)-Q_t(x)\}
\,\d\P_{X,\epsilon}(x),
\end{align*}
whereas
\[
\P\bigl[
g(X)\{\ind(Y\le t)-Q_t(X)\}
\bigr]=0
\]
by conditional centering. Thus,
\eqref{eq:qmd-Q-identity} holds exactly
for the fixed versions of the conditional CDFs.

We now control the nonlinear Taylor remainders.
Since $\phi\in C^1([-1,1])$, its derivative is
bounded and uniformly continuous. Let
\[
\omega(h):=
\sup_{\substack{u,v\in[-1,1]\\|u-v|\le h}}
|\phi'(u)-\phi'(v)|.
\]
For $a,a+b\in[-1,1]$, the mean-value
theorem gives
\[
|\phi(a+b)-\phi(a)-\phi'(a)b|
\le |b|\omega(|b|).
\]
For any $\gamma>0$, the right-hand side
is bounded by
\[
\omega(\gamma)|b|
+\frac{2\|\phi'\|_\infty}{\gamma}b^2.
\]

By \eqref{eq:qmd-Q-bound} and the uniform
bound on $\eta_\epsilon$, we have
\begin{align*}
&\int_{\R}\int_{\cX}
\{\delta_{t,\epsilon}(x)-\eta_\epsilon(t)\}^2
\,\d\P_{X,\epsilon}(x)
\d\P_{Y,\epsilon}(t)
=O(\epsilon^2),
\end{align*}
and
\begin{align*}
&\int_{\R}\int_{\cX^2}
\{\delta_{t,\epsilon}(x)
-\delta_{t,\epsilon}(u)\}^2\times
\d\P_{X,\epsilon}(x)
\d\P_{X,\epsilon}(u)
\d\P_{Y,\epsilon}(t)
=O(\epsilon^2).
\end{align*}
Cauchy--Schwarz therefore shows that
the integrated Taylor remainders for both
functionals are bounded by
\[
C|\epsilon|\omega(\gamma)
+C\epsilon^2/\gamma.
\]
Letting $\epsilon\to0$ and then
$\gamma\downarrow0$ establishes that
these remainders are $o(|\epsilon|)$.

\medskip
\noindent\emph{Part (i): the $\xi_\phi$ family.}
A first-order Taylor expansion around
$\Delta_t(x)$ gives
\begin{align}
\alpha_\phi
\{\xi_\phi(\P_\epsilon)-\xi_\phi(\P)\}
&=
\int_{\R}\int_{\cX}
\phi\{\Delta_t(x)\}
\,\d\bigl(
\P_{X,\epsilon}\otimes\P_{Y,\epsilon}
-\P_X\otimes\P_Y
\bigr)(x,t)
\notag\\
&\quad+
\int_{\R}\int_{\cX}
\phi'\{\Delta_t(x)\}
\delta_{t,\epsilon}(x)
\,\d\P_{X,\epsilon}(x)
\d\P_{Y,\epsilon}(t)
\notag\\
&\quad-
\int_{\R}\int_{\cX}
\phi'\{\Delta_t(x)\}
\eta_\epsilon(t)
\,\d\P_{X,\epsilon}(x)
\d\P_{Y,\epsilon}(t)
+o(|\epsilon|).
\label{eq:qmd-xi-expansion}
\end{align}

We analyze the three displayed terms separately.
Since $\phi$ is bounded, the product-measure
expansion gives
\begin{align*}
&\int_{\R}\int_{\cX}
\phi\{\Delta_t(x)\}
\,\d\bigl(
\P_{X,\epsilon}\otimes\P_{Y,\epsilon}
-\P_X\otimes\P_Y
\bigr)(x,t)=
(\P_\epsilon-\P)
\{A_{\phi,\P}^{\xi}(Y)
+B_{\phi,\P}^{\xi}(X)\}
+O(\epsilon^2).
\end{align*}
Here the remainder comes from the product
$(\P_{X,\epsilon}-\P_X)
\otimes(\P_{Y,\epsilon}-\P_Y)$,
whose total-variation norm is $O(\epsilon^2)$.

For the second term in
\eqref{eq:qmd-xi-expansion},
\eqref{eq:qmd-Q-L1} and
$\|\P_{Y,\epsilon}-\P_Y\|_{\mathrm{TV}}
=O(|\epsilon|)$ imply that
\begin{align*}
&\int_{\R}\int_{\cX}
\phi'\{\Delta_t(x)\}
\delta_{t,\epsilon}(x)
\,\d\P_{X,\epsilon}(x)
\d\P_{Y,\epsilon}(t)=
\int_{\R}\int_{\cX}
\phi'\{\Delta_t(x)\}
\delta_{t,\epsilon}(x)
\,\d\P_{X,\epsilon}(x)
\d\P_Y(t)
+O(\epsilon^2).
\end{align*}
Applying \eqref{eq:qmd-Q-identity} with
$g(x)=\phi'\{\Delta_t(x)\}$, for each
fixed $t$, and then integrating over
$\P_Y$ yields
\begin{align*}
&\int_{\R}\int_{\cX}
\phi'\{\Delta_t(x)\}
\delta_{t,\epsilon}(x)
\,\d\P_{X,\epsilon}(x)
\d\P_Y(t)=
(\P_\epsilon-\P)C_{\phi,\P}^{\xi}.
\end{align*}

For the third term, the uniform bound
$\sup_t|\eta_\epsilon(t)|=O(|\epsilon|)$
allows us to replace the perturbed marginal
measures by their values under $\P$,
with an $O(\epsilon^2)$ error. Thus,
\begin{align*}
&-\int_{\R}\int_{\cX}
\phi'\{\Delta_t(x)\}\eta_\epsilon(t)
\,\d\P_{X,\epsilon}(x)
\d\P_{Y,\epsilon}(t)=
-\int_{\R}\overline\phi_\P(t)
\eta_\epsilon(t)\,\d\P_Y(t)
+O(\epsilon^2)=
(\P_\epsilon-\P)D_{\phi,\P}^{\xi}
+O(\epsilon^2).
\end{align*}
Combining these identities with
\eqref{eq:qmd-xi-expansion}, and using
the centering calculations established
above, we obtain
\begin{equation}
\label{eq:qmd-xi-result}
\xi_\phi(\P_\epsilon)-\xi_\phi(\P)
=
(\P_\epsilon-\P)\widetilde\xi_{\phi,\P}
+o(|\epsilon|).
\end{equation}

\medskip
\noindent\emph{Part (ii): the $\Lambda_\phi$ family.}
Similarly, a first-order Taylor expansion gives
\begin{align}
&\beta_\phi
\{\Lambda_\phi(\P_\epsilon)-\Lambda_\phi(\P)\}=
\int_{\R}\int_{\cX^2}
\phi\{Q_t(x)-Q_t(u)\}
\,\d\bigl(
\P_{X,\epsilon}^{\otimes2}\otimes\P_{Y,\epsilon}
-\P_X^{\otimes2}\otimes\P_Y
\bigr)(x,u,t)
\notag\\
&\quad+
\int_{\R}\int_{\cX^2}
\phi'\{Q_t(x)-Q_t(u)\}
\{\delta_{t,\epsilon}(x)
-\delta_{t,\epsilon}(u)\}\times
\d\P_{X,\epsilon}(x)
\d\P_{X,\epsilon}(u)
\d\P_{Y,\epsilon}(t)
+o(|\epsilon|).
\label{eq:qmd-lambda-expansion}
\end{align}
Expanding the product measures, the first term
equals
\begin{align*}
(\P_\epsilon-\P)
\{A_{\phi,\P}^{\Lambda}(Y)
+B_{\phi,\P}^{\Lambda}(X)\}
+O(\epsilon^2).
\end{align*}
For the second term, exchanging $x$ and $u$
in the part involving $\delta_{t,\epsilon}(u)$
gives
\begin{align*}
&\int_{\R}\int_{\cX^2}
\big[
\phi'\{Q_t(x)-Q_t(u)\}-
\phi'\{Q_t(u)-Q_t(x)\}
\big]\delta_{t,\epsilon}(x)\times
\d\P_{X,\epsilon}(x)
\d\P_{X,\epsilon}(u)
\d\P_{Y,\epsilon}(t).
\end{align*}
By \eqref{eq:qmd-Q-L1}, replacing the
integration measures for $u$ and $t$
by $\P_X$ and $\P_Y$, respectively,
introduces an error of order $O(\epsilon^2)$.

Applying \eqref{eq:qmd-Q-identity} with
\[
g(x)=
\int_{\cX}
\big[
\phi'\{Q_t(x)-Q_t(u)\}
-\phi'\{Q_t(u)-Q_t(x)\}
\big]\d\P_X(u),
\]
where $t$ is held fixed, shows that the
second term in
\eqref{eq:qmd-lambda-expansion} equals
\[
(\P_\epsilon-\P)C_{\phi,\P}^{\Lambda}
+O(\epsilon^2).
\]
Combining the two terms and using the
centering calculations already established
gives
\begin{equation}
\label{eq:qmd-lambda-result}
\Lambda_\phi(\P_\epsilon)-\Lambda_\phi(\P)
=
(\P_\epsilon-\P)\widetilde\Lambda_{\phi,\P}
+o(|\epsilon|).
\end{equation}

Finally, the two EIF candidates are bounded.
Thus, applying the total-variation expansion
\eqref{eq:qmd-tv} to
\eqref{eq:qmd-xi-result} and
\eqref{eq:qmd-lambda-result} yields
\begin{align*}
\xi_\phi(\P_\epsilon)-\xi_\phi(\P)
&=
\epsilon\,\P(\widetilde\xi_{\phi,\P}s)
+o(|\epsilon|),\\
\Lambda_\phi(\P_\epsilon)-\Lambda_\phi(\P)
&=
\epsilon\,\P(\widetilde\Lambda_{\phi,\P}s)
+o(|\epsilon|).
\end{align*}
Since these expansions hold along every QMD
submodel, both functionals are pathwise
differentiable at $\P$.
\end{proof}

\begin{proof}[Proof of Corollary~\ref{cor:eif_chatterjee}]

For $\phi(u)=u^2$, using $\P_XQ_t(X)=F_Y(t)$ gives
\begin{align*}
\int_\cX\{Q_t(x)-F_Y(t)\}^2\,\d\P_X(x)
&=\int_\cX Q_t(x)^2\,\d\P_X(x)-F_Y(t)^2,\\
\int_{\cX^2}\{Q_t(x)-Q_t(u)\}^2
\,\d\P_X(x)\d\P_X(u)
&=2\left\{\int_\cX Q_t(x)^2\,\d\P_X(x)-F_Y(t)^2\right\}.
\end{align*}
Since $F_Y(Y)$ is uniform on $[0,1]$, $\int F_Y(t)^2\,\d\P_Y(t)=1/3$. Proposition~\ref{prop:normalizing_constants} therefore yields
\begin{align*}
\xi_\phi(\P)
&=6\left\{\Psi_\P-\frac13\right\}=6\Psi_\P-2,\\
\Lambda_\phi(\P)
&=3\cdot2\left\{\Psi_\P-\frac13\right\}=6\Psi_\P-2.
\end{align*}

Recall that
\[
\Psi_\P
=\int_\R\int_\cX Q_t(x)^2\,\d\P_X(x)\d\P_Y(t).
\]
As in the proof of Theorem~\ref{thm:eif_generalized_chatterjee}, consider $\d\P_\epsilon=(1+\epsilon s)\d\P$ with bounded $s\in L_0^2(\P)$. Differentiating the outer threshold measure contributes $\P[H_\P(Y)s(Z)]$, while differentiating the covariate measure contributes
\[
\P\!\left[s(Z)\int_\R Q_t(X)^2\,\d\P_Y(t)\right].
\]
Differentiating the squared conditional CDF contributes
\begin{align*}
&2\int_\R\int_\cX Q_t(x)\dot Q_t(x)
 \d\P_X(x)\d\P_Y(t)=\P\!\left[
 s(Z)\int_\R 2Q_t(X)
 \{\ind(Y\le t)-Q_t(X)\}\,\d\P_Y(t)\right].
\end{align*}
Combining the three terms and using $\P s=0$ yields the centered IF
\[
H_\P(y)
+\int_\R\{2Q_t(x)\ind(y\le t)-Q_t(x)^2\}\,\d\P_Y(t)
-2\Psi_\P.
\]
Both uncentered contributions have expectation $\Psi_\P$, so the displayed IF is centered.
\end{proof}

\begin{proof}[Proof of Corollary~\ref{prop:vanishing_eif_independence}]
\emph{(i) Independence.} If $X$ and $Y$ are independent, then $Q_t(x)=F_Y(t)$ for $\P_X\otimes\P_Y$-almost every $(x,t)$. Thus $\Delta_t(x)=0$, and $\phi(0)=0$ gives $A_{\phi,\P}^\xi=B_{\phi,\P}^\xi=0$ and $\xi_\phi(\P)=0$. The remaining two terms in~\eqref{eq:eif_xi_phi} satisfy
\begin{align*}
C_{\phi,\P}^\xi(x,y)
&=\phi'(0)\int_\R
 \{\ind(y\le t)-F_Y(t)\}\,\d\P_Y(t),\\
D_{\phi,\P}^\xi(y)
&=-\phi'(0)\int_\R
 \{\ind(y\le t)-F_Y(t)\}\,\d\P_Y(t),
\end{align*}
and cancel pointwise. Hence $\widetilde\xi_{\phi,\P}=0$ almost surely. For the pairwise family, the integrands of $A_{\phi,\P}^\Lambda$ and $B_{\phi,\P}^\Lambda$ are $\phi(0)=0$, while the derivative difference in $C_{\phi,\P}^\Lambda$ is $\phi'(0)-\phi'(0)=0$. Therefore $\widetilde\Lambda_{\phi,\P}=0$ as well.

\smallskip
\noindent\emph{(ii) Perfect functional dependence.}
Suppose $Y=f(X)$ almost surely. For any bounded score $s\in L_0^2(\P)$ and sufficiently small positive or negative $\epsilon$, the perturbed distribution $\d\P_\epsilon=(1+\epsilon s)\d\P$ remains supported on $\{(x,y):y=f(x)\}$. Indeed,
\[
\P_\epsilon\{Y\ne f(X)\}
=\P[\ind\{Y\ne f(X)\}(1+\epsilon s(Z))]=0.
\]
Its marginal distribution of $Y$ remains continuous because $\P_\epsilon\ll\P$, so the normalizing constants remain $\alpha_\phi$ and $\beta_\phi$. Both dependence measures therefore equal one along this entire submodel, by their perfect-dependence characterization:
\[
\xi_\phi(\P_\epsilon)=\Lambda_\phi(\P_\epsilon)=1.
\]
Their derivatives at $\epsilon=0$ vanish for every bounded score. By Theorem~\ref{thm:eif_generalized_chatterjee}, these derivatives are the inner products of the respective EIFs with $s$. Since bounded mean-zero scores are dense in $L_0^2(\P)$ and both EIFs lie in that space, the only possibility is that both EIFs vanish $\P$-almost surely.
\end{proof}

\subsection{Proofs for Section~\ref{sec:estimation}}

\begin{proof}[Proof of Theorem~\ref{thm:general_inefficiency}]

\emph{(i) Chatterjee's rank correlation.} 
Note that
\begin{align*}
&\E_\P\left[
\{\ind(Y\le t)-Q_t(X)\}
\{\ind(Y\le s)-Q_s(X)\}
\mid X
\right]=
\P(Y\le t\wedge s\mid X)-Q_t(X)Q_s(X)
=R_\P(t,s;X)
\end{align*}
and
\begin{align*}
\frac{\widetilde\xi_\P(X,Y)}6
=&H_\P(Y)-\Psi_\P+\int_\R\left[
Q_t(X)^2-\Psi_\P
+2Q_t(X)\{\ind(Y\le t)-Q_t(X)\}
\right]\d\P_Y(t).
\end{align*}
Both terms on the right are centered: the expectation
of $H_\P(Y)$ is $\Psi_\P$, and the integral has mean zero.

By continuity of $F_Y$, the function
$H_\P(t)=\E_\P[Q_t(X)^2]$ is continuous,
nondecreasing, and has limits zero and one.
Integration by parts gives
\[
H_\P(Y)
=
1-\int_\R\ind(Y\le t)\,\d H_\P(t),
\quad \P\text{-almost surely}.
\]
It follows that
\[
\Var_\P\{H_\P(Y)\}
=
\int_{\R^2}C_{22}(t,s)
\,\d H_\P(t)\d H_\P(s).
\]

To compute the remaining covariances, write
\[
L_t
:=
Q_t(X)^2
+2Q_t(X)\{\ind(Y\le t)-Q_t(X)\}.
\]
Conditional centering of the residuals gives
\begin{align*}
{\rm Cov}_\P(L_t,L_s)
=&
{\rm Cov}_\P\{Q_t(X)^2,Q_s(X)^2\}+4\E_\P[Q_t(X)Q_s(X)R_\P(t,s;X)],\\
{\rm Cov}_\P\{L_t,\ind(Y\le s)\}
=&
{\rm Cov}_\P\{Q_t(X)^2,Q_s(X)\}+2\E_\P[Q_t(X)R_\P(t,s;X)].
\end{align*}
For the first identity, the cross-covariances
involving exactly one conditionally centered
residual vanish after conditioning on $X$.
For the second, decompose $\ind(Y\le s)$ into
$Q_s(X)+\{\ind(Y\le s)-Q_s(X)\}$.

Comparison with the definitions of
$C_{11}$ and the corrected $C_{12}$ yields
\begin{align*}
{\rm Cov}_\P(L_t,L_s)
&=
C_{11}(t,s)
-\E_\P[R_\P(t,s;X)^2],~~~{\rm Cov}_\P\{L_t,\ind(Y\le s)\}
=C_{12}(t,s).
\end{align*}

Since $H_\P(Y)$ is represented as the
negative integral of $\ind(Y\le s)$ up to
a constant, expanding the variance of
the displayed EIF gives
\begin{align*}
\frac1{36}\P\widetilde\xi_\P^2
=&
\int_{\R^2}
\left\{
C_{11}(t,s)
-\E_\P[R_\P(t,s;X)^2]
\right\}
\d\P_Y(t)\d\P_Y(s)+
\int_{\R^2}C_{22}(t,s)
\,\d H_\P(t)\d H_\P(s)\\
&-
2\int_{\R^2}C_{12}(t,s)
\,\d\P_Y(t)\d H_\P(s).
\end{align*}
The expression in
Theorem~\ref{thm:rank_graph_variance}(i)
is the same without subtracting
\[
\int_{\R^2}
\E_\P[R_\P(t,s;X)^2]
\,\d\P_Y(t)\d\P_Y(s)
=
\Delta_{1,\P}.
\]
Consequently,
\[
\sigma_{\mathrm{Ch}}^2
=
\P\widetilde\xi_\P^2+36\Delta_{1,\P}.
\]

\smallskip
\noindent\emph{(ii) Chatterjee's graph correlation.}
Let $\widetilde Y,\widetilde Y',\widetilde Y''$ be the conditionally independent copies introduced in the main text, and write $L=F_Y(Y\wedge\widetilde Y)$ for this calculation. Conditional independence gives
\begin{align*}
T_2
&=\E\!\left[
 \E\{L\mid X,Y\}^2\right]
=\E\{m_\P(X,Y)^2\},~~~
T_3
=\E\!\left[
 \E\{L\mid X\}^2\right]
=\E\{\theta_\P(X)^2\}.
\end{align*}
It follows from $\E[m_\P(X,Y)\mid X]=\theta_\P(X)$ that
\begin{align*}
\Delta_{2,\P}=\E[m_\P(X,Y)^2]
 -2\E[m_\P(X,Y)\theta_\P(X)]
 +\E[\theta_\P(X)^2]=T_2-T_3.
\end{align*}
To identify $\Delta_{1,\P}$, use $F_Y(y\wedge y')=\int_\R\ind(y\ge t)\ind(y'\ge t) \,\d\P_Y(t)$. Conditional on $X$, the centered two-variable kernel
\begin{align*}
&L-m_\P(X,Y)-m_\P(X,\widetilde Y)+\theta_\P(X)
\end{align*}
has conditional mean zero given either $Y$ or $\widetilde Y$. Expanding its squared expectation, conditioning on $(X,Y)$ or $(X,\widetilde Y)$ in the mixed terms, gives
\begin{align*}
&\E\big[\{L-m_\P(X,Y)-m_\P(X,\widetilde Y)
 +\theta_\P(X)\}^2\big]=T_1-2T_2+T_3.
\end{align*}
On the other hand, substituting the integral representation of $L$ shows that the expression inside braces equals
\[
\int_\R
\{\ind(Y\ge t)-\P(Y\ge t\mid X)\}
\{\ind(\widetilde Y\ge t)-\P(\widetilde Y\ge t\mid X)\}
\,\d\P_Y(t).
\]
Conditioning on $X$ and using independence of $Y$ and $\widetilde Y$ given $X$, the expectation of its square is
\[
\int_{\R^2}\E\{R_\P(t,s;X)^2\}
\,\d\P_Y(t)\d\P_Y(s)=\Delta_{1,\P}.
\]
Consequently,
\[
T_1-2T_2+T_3=\Delta_{1,\P},
\qquad T_2-T_3=\Delta_{2,\P}.
\]

For the remainder of this argument, write $h(y)=1-2F_Y(y)+H_\P(y)$. Since $Y$ is continuously distributed,
\[
\int_\R\ind(y\le t)\,\d\P_Y(t)=1-F_Y(y),
\qquad
\int_\R Q_t(x)\,\d\P_Y(t)
=1-\E\{F_Y(Y)\mid X=x\}.
\]
Using the definitions of $m_\P$ and $\theta_\P$,
\begin{align*}
m_\P(x,y)
&=\int_\R\ind(y\ge t)\{1-Q_t(x)\}
 \,\d\P_Y(t),~~~\theta_\P(x)
=\int_\R\{1-Q_t(x)\}^2\,\d\P_Y(t).
\end{align*}
Substituting these two formulas and expanding squares gives
\begin{align*}
&h(y)+2m_\P(x,y)-\theta_\P(x)\\
&\quad=1-2F_Y(y)+H_\P(y)
 +2\int_{t\le y}(1-Q_t(x))\,\d\P_Y(t)
 -\int_\R\{1-2Q_t(x)+Q_t(x)^2\}\,\d\P_Y(t)\\
&\quad=H_\P(y)+\int_\R
 \{2Q_t(x)\ind(y\le t)-Q_t(x)^2\}
 \,\d\P_Y(t).
\end{align*}
Thus~\eqref{eq:eif_chatterjee} can be written
\[
\frac{\widetilde\xi_\P(X,Y)}6
=h(Y)+2m_\P(X,Y)-\theta_\P(X)-2\Psi_\P.
\]
Conditioning further gives
\begin{align*}
T_4&=\E\{h(Y)m_\P(X,Y)\},~~~T_5=\E\{h(Y)\theta_\P(X)\},~~~
T_6=\E\{h(Y)^2\}.
\end{align*}
Finally,
\[
\E h(Y)=\E m_\P(X,Y)=\E\theta_\P(X)=\Psi_\P,
\quad
\E[m_\P(X,Y)\theta_\P(X)]=T_3.
\]
Expanding the square of the EIF and collecting terms therefore gives
\begin{align*}
\frac1{36}\P\widetilde\xi_\P^2
&=\E\{h(Y)+2m_\P(X,Y)-\theta_\P(X)-2\Psi_\P\}^2\\
&=T_6+4T_2-3T_3+4T_4-2T_5-4T_7.
\end{align*}
Substituting this equality, together with $T_1-2T_2+T_3=\Delta_{1,\P}$ and $T_2-T_3=\Delta_{2,\P}$, into the graph variance formula yields
\begin{align*}
\frac{\sigma_{\mathrm{AC},d}^2}{36}
&=T_6+4T_2-3T_3+4T_4-2T_5-4T_7+(1+\mathfrak{q}_d)(T_1-2T_2+T_3)
   +\mathfrak{o}_d(T_2-T_3)\\
&=\frac1{36}\P\widetilde\xi_\P^2
 +(1+\mathfrak{q}_d)\Delta_{1,\P}+\mathfrak{o}_d\Delta_{2,\P},
\end{align*}
which establishes part (ii).

\smallskip

\noindent\emph{(iii) Strict positivity away from perfect dependence.}
Let $V:=F_Y(Y)$, and recall that $V\sim\mathrm{Unif}(0,1)$
by continuity of $F_Y$. We first consider $\Delta_{1,\P}$. By the definition of
$R_\P(t,s;X)$ and conditional Fubini,
\begin{align*}
&\int_{\R^2}R_\P(t,s;X)
\,\d\P_Y(t)\d\P_Y(s)=
\Var_\P\left\{
\int_\R\ind(Y\le t)\,\d\P_Y(t)
\;\middle|\;X
\right\}=
\Var_\P\{F_Y(Y)\mid X\}
=\Var_\P(V\mid X).
\end{align*}
Consequently, Jensen's inequality gives
\begin{align*}
\Delta_{1,\P}
&=
\E_\P\left[
\int_{\R^2}R_\P(t,s;X)^2
\,\d\P_Y(t)\d\P_Y(s)
\right]\ge
\E_\P\left[
\left\{
\int_{\R^2}R_\P(t,s;X)
\,\d\P_Y(t)\d\P_Y(s)
\right\}^2
\right]=
\E_\P\left[
\Var_\P(V\mid X)^2
\right].
\end{align*}
Thus, $\Delta_{1,\P}=0$ implies
$\Var_\P(V\mid X)=0$ almost surely.
Conversely, if $\Var_\P(V\mid X)=0$ almost surely,
then $Y$ is conditionally degenerate given $X$,
which implies $R_\P(t,s;X)=0$ almost surely
for every $t,s$. Hence $\Delta_{1,\P}=0$.

We next consider $\Delta_{2,\P}$. For each $x$, define
\[
g_x(v)
:=
\E_\P[\min(v,V)\mid X=x]
=
\int_0^v
\P(V\ge u\mid X=x)\,\d u.
\]
Since $F_Y$ is nondecreasing,
\[
m_\P(X,Y)=g_X(V),
\qquad
\theta_\P(X)=\E_\P[g_X(V)\mid X].
\]
It follows that
\[
\Delta_{2,\P}
=
\E_\P\left[
\Var_\P\{g_X(V)\mid X\}
\right].
\]
For any $v_1<v_2$,
\[
g_x(v_2)-g_x(v_1)
=
\int_{v_1}^{v_2}
\P(V\ge u\mid X=x)\,\d u.
\]
This quantity is strictly positive whenever
$v_1<v_2$ belong to the essential support
of the conditional distribution of $V$ given
$X=x$. Thus, $g_x$ is strictly increasing
on that support. Consequently,
\[
\Var_\P\{g_X(V)\mid X\}=0
\quad\text{if and only if}\quad
\Var_\P(V\mid X)=0
\]
almost surely, and hence
\[
\Delta_{2,\P}=0
\quad\text{if and only if}\quad
\Var_\P(V\mid X)=0,
\quad\P\text{-almost surely}.
\]

Lastly, since $F_Y$ is continuous,
$Y=F_Y^{-1}(V)$ almost surely, where
$F_Y^{-1}$ denotes the generalized inverse.
Therefore,
\[
\Var_\P(V\mid X)=0,
\quad\P\text{-almost surely}
\]
if and only if $Y$ is almost surely a measurable
function of $X$.

Combining the preceding arguments yields
\[
\Delta_{1,\P}=0
\quad\text{if and only if}\quad
\Delta_{2,\P}=0
\quad\text{if and only if}\quad
Y=f(X),\quad\P\text{-almost surely}
\]
for some measurable function $f$.
Equivalently, both $\Delta_{1,\P}$ and
$\Delta_{2,\P}$ are strictly positive
if and only if perfect functional dependence
fails. The variance decompositions in
parts (i)--(ii), together with
$\mathfrak q_d,\mathfrak o_d>0$,
complete the proof.
\end{proof}

\begin{proof}[Proof of Theorem~\ref{thm:general_efficient_estimator}]
Fix a fold $k$ and condition on the training observations outside $\mathcal I_k$. Write $q_t(x)=\widehat Q_{k,t}(x)$, $F(t)=F_Y(t)$, and $\delta_t(x)=q_t(x)-Q_t(x)$. Thus $q$ is fixed when the validation observations are sampled, $0\le q_t(x)\le1$, and $\|\delta\|_{2,\P_X\otimes\P_Y}\le r_n$. In the calculations below, $Z',Z''$ denote independent copies of $Z=(X,Y)$, also independent of the displayed argument $z$. The conditional remainder bounds below are uniform over $[0,1]$-valued functions $q$.

\smallskip
\noindent\emph{(i) $\xi_\phi$ estimators.}
By the definitions of $\widehat N^\xi_{\phi,k}$ and $\widehat C^\xi_{\phi,k}$,
\begin{align*}
\widehat N^\xi_{\phi,k}
+\PP_{n,k}\widehat C^\xi_{\phi,k}
=\frac1{n_k^2}\sum_{i,j\in\mathcal I_k}
\Big[&\phi\{q_{Y_j}(X_i)-\widehat F_k(Y_j)\}+\phi'\{q_{Y_j}(X_i)-\widehat F_k(Y_j)\}
\{\ind(Y_i\le Y_j)-q_{Y_j}(X_i)\}\Big].
\end{align*}
Replacing $\widehat F_k$ by $F$ and the empirical averages by their population counterparts gives
\begin{align*}
&\int_\R\int_\cX
\Big[\phi\{q_t(x)-F(t)\}
+\phi'\{q_t(x)-F(t)\}\{Q_t(x)-q_t(x)\}\Big]
\,\d\P_X(x)\d\P_Y(t).
\end{align*}
For each $(x,t)$, Taylor's theorem yields
\begin{align*}
\phi\{Q_t(x)-F(t)\}
={}&\phi\{q_t(x)-F(t)\}
+\phi'\{q_t(x)-F(t)\}\{Q_t(x)-q_t(x)\}+O\big(\delta_t(x)^2\big),
\end{align*}
where the implicit constant depends only on $\|\phi''\|_\infty$. Therefore the preceding population expression differs from $\alpha_\phi\xi_\phi(\P)$ by at most $C\|\delta\|_{2,\P_X\otimes\P_Y}^2$.

For $z_j=(x_j,y_j)$, introduce the two bounded kernels
\begin{align*}
H_q^\xi(z_1,z_2)
:=&\phi\{q_{y_2}(x_1)-F(y_2)\}+\phi'\{q_{y_2}(x_1)-F(y_2)\}
\{\ind(y_1\le y_2)-q_{y_2}(x_1)\},\\
G_q^\xi(z_1,z_2,z_3)
:={}&-\Big[\phi'\{q_{y_2}(x_1)-F(y_2)\}\\
&\qquad+\phi''\{q_{y_2}(x_1)-F(y_2)\}
\{\ind(y_1\le y_2)-q_{y_2}(x_1)\}\Big]
\{\ind(y_3\le y_2)-F(y_2)\}.
\end{align*}
Since
\[
\widehat F_k(t)-F(t)
=\frac1{n_k}\sum_{\ell\in\mathcal I_k}
\{\ind(Y_\ell\le t)-F(t)\},
\]
Taylor expansion in $\widehat F_k(Y_j)-F(Y_j)$ yields
\begin{align*}
\widehat N^\xi_{\phi,k}
+\PP_{n,k}\widehat C^\xi_{\phi,k}
={}&\frac1{n_k^2}\sum_{i,j\in\mathcal I_k}
H_q^\xi(Z_i,Z_j)+\frac1{n_k^3}\sum_{i,j,\ell\in\mathcal I_k}
G_q^\xi(Z_i,Z_j,Z_\ell)
+O_\P(n_k^{-1}).
\end{align*}
Indeed, the Taylor remainder is bounded by $C\|\widehat F_k-F\|_\infty^2=O_\P(n_k^{-1})$, uniformly in $q$, by the empirical-CDF maximal inequality.

Applying the Hoeffding decompositions to these bounded second- and third-order $V$-statistics gives
\begin{align*}
\widehat N^\xi_{\phi,k}
+\PP_{n,k}\widehat C^\xi_{\phi,k}
={}&\E H_q^\xi(Z,Z')
+(\PP_{n,k}-\P)\pi_q^\xi
+O_\P(n_k^{-1}),
\end{align*}
where $\E G_q^\xi(Z,Z',Z'')=0$ and
\begin{align*}
\pi_q^\xi(z)
:=&\E H_q^\xi(z,Z')
+\E H_q^\xi(Z',z)
+\E G_q^\xi(Z',Z'',z)
-2\E H_q^\xi(Z,Z').
\end{align*}
The first two positional projections of $G_q^\xi$ vanish because its expectation with respect to the third argument is zero. For $z=(x,y)$, the three nonconstant terms are
\begin{align*}
\E H_q^\xi(z,Z')
={}&\int_\R\Big[
\phi\{q_t(x)-F(t)\}
+\phi'\{q_t(x)-F(t)\}
\{\ind(y\le t)-q_t(x)\}\Big] \,\d\P_Y(t),\\
\E H_q^\xi(Z',z)
={}&\int_\cX\Big[
\phi\{q_y(u)-F(y)\}
+\phi'\{q_y(u)-F(y)\}
\{Q_y(u)-q_y(u)\}\Big] \,\d\P_X(u),\\
\E G_q^\xi(Z',Z'',z)
={}&-\int_\R\int_\cX
\Big[\phi'\{q_t(u)-F(t)\}
+\phi''\{q_t(u)-F(t)\}
\{Q_t(u)-q_t(u)\}\Big]\\
&\hspace{5em}\times\{\ind(y\le t)-F(t)\}
\,\d\P_X(u)\d\P_Y(t).
\end{align*}
At $q=Q$, these become, respectively, $B^\xi_{\phi,\P}(x)+C^\xi_{\phi,\P}(x,y)$, $A^\xi_{\phi,\P}(y)$, and $D^\xi_{\phi,\P}(y)$. Moreover, $\E H_Q^\xi(Z,Z')=\alpha_\phi\xi_\phi(\P)$. Hence Theorem~\ref{thm:eif_generalized_chatterjee}(i) gives
\[
\pi_Q^\xi(z)=\alpha_\phi\widetilde\xi_{\phi,\P}(z).
\]
The boundedness of $q,Q$, together with the mean-value theorem and Cauchy--Schwarz, applied separately to the three displayed integrals, yields
\[
\|\pi_q^\xi-\pi_Q^\xi\|_{L^2(\P)}
\le C\|q-Q\|_{2,\P_X\otimes\P_Y}.
\]
Indeed, differences involving a response evaluated at the threshold $y$ are bounded using
\[
\int_\R\Big\{\int_\cX|\delta_y(u)|\,\d\P_X(u)\Big\}^2
\,\d\P_Y(y)
\le\|\delta\|_{2,\P_X\otimes\P_Y}^2;
\]
the remaining terms are handled identically by interchanging the order of integration. Thus, conditional on the training data,
\[
(\PP_{n,k}-\P)(\pi_q^\xi-\pi_Q^\xi)
=O_\P\left(\frac{\|\delta\|_{2,\P_X\otimes\P_Y}}{\sqrt{n_k}}\right).
\]
Combining this bound with previous results proves
\begin{align*}
\frac{\widehat N^\xi_{\phi,k}
+\PP_{n,k}\widehat C^\xi_{\phi,k}}{\alpha_\phi}
-\xi_\phi(\P)
={}&(\PP_{n,k}-\P)\widetilde\xi_{\phi,\P}+O_\P\left(r_n^2+\frac{r_n}{\sqrt n}+\frac1n\right).
\end{align*}

\smallskip
\noindent\emph{(ii) $\Lambda_\phi$ estimator.}
By expanding the three empirical integrations in its definition,
\begin{align*}
\widehat N^\Lambda_{\phi,k}
+\PP_{n,k}\widehat C^\Lambda_{\phi,k}
=\frac1{n_k^3}\sum_{i,j,\ell\in\mathcal I_k}
H_q^\Lambda(Z_i,Z_j,Z_\ell),
\end{align*}
where, for $z_j=(x_j,y_j)$ and $t=y_3$,
\begin{align*}
H_q^\Lambda(z_1,z_2,z_3)
:={}&\phi\{q_t(x_1)-q_t(x_2)\}+\Big[\phi'\{q_t(x_1)-q_t(x_2)\}
-\phi'\{q_t(x_2)-q_t(x_1)\}\Big]
\{\ind(y_1\le t)-q_t(x_1)\}.
\end{align*}
Its expectation is
\begin{align*}
\E H_q^\Lambda(Z,Z',Z'')
=\int_\R\int_{\cX^2}
\Big[&\phi\{q_t(x)-q_t(u)\}+\{\phi'\{q_t(x)-q_t(u)\}
-\phi'\{q_t(u)-q_t(x)\}\}\\
&\qquad\times\{Q_t(x)-q_t(x)\}\Big]
\,\d\P_X(x)\d\P_X(u)\d\P_Y(t).
\end{align*}
To compare this expression with $\beta_\phi\Lambda_\phi(\P)$, Taylor-expand
\begin{align*}
\phi\{Q_t(x)-Q_t(u)\}
={}&\phi\{q_t(x)-q_t(u)\}+\phi'\{q_t(x)-q_t(u)\}
\big[\{Q_t(x)-q_t(x)\}-\{Q_t(u)-q_t(u)\}\big]\\
&+O\big(\delta_t(x)^2+\delta_t(u)^2\big).
\end{align*}
Exchanging $x$ and $u$ in the second part of the linear term gives
\begin{align*}
&\int_{\cX^2}\phi'\{q_t(x)-q_t(u)\}
\big[\{Q_t(x)-q_t(x)\}-\{Q_t(u)-q_t(u)\}\big]
\,\d\P_X(x)\d\P_X(u)\\
&\quad=
\int_{\cX^2}
\{\phi'\{q_t(x)-q_t(u)\}
-\phi'\{q_t(u)-q_t(x)\}\}
\{Q_t(x)-q_t(x)\}
\,\d\P_X(x)\d\P_X(u).
\end{align*}
This equals the correction term in $\E H_q^\Lambda$. Consequently,
\[
\big|\E H_q^\Lambda(Z,Z',Z'')
-\beta_\phi\Lambda_\phi(\P)\big|
\le C\|\delta\|_{2,\P_X\otimes\P_Y}^2.
\]

The Hoeffding decomposition of the bounded third-order kernel yields
\begin{align*}
\widehat N^\Lambda_{\phi,k}
+\PP_{n,k}\widehat C^\Lambda_{\phi,k}
={}&\E H_q^\Lambda(Z,Z',Z'')
+(\PP_{n,k}-\P)\pi_q^\Lambda
+O_\P(n_k^{-1}),
\end{align*}
where
\begin{align*}
\pi_q^\Lambda(z)
={}&\E H_q^\Lambda(z,Z',Z'')
+\E H_q^\Lambda(Z',z,Z'')+\E H_q^\Lambda(Z',Z'',z)
-3\E H_q^\Lambda(Z,Z',Z'').
\end{align*}
Write $z=(x,y)$. At $q=Q$, the first conditional expectation is
\begin{align*}
\E H_Q^\Lambda(z,Z',Z'')
={}&\int_\R\int_\cX
\Big[\phi\{Q_t(x)-Q_t(u)\}\\
&+\{\phi'\{Q_t(x)-Q_t(u)\}
-\phi'\{Q_t(u)-Q_t(x)\}\}
\{\ind(y\le t)-Q_t(x)\}\Big]
\,\d\P_X(u)\d\P_Y(t).
\end{align*}
The other two expectations simplify to
\begin{align*}
\E H_Q^\Lambda(Z',z,Z'')
&=\int_\R\int_\cX
\phi\{Q_t(u)-Q_t(x)\}
\,\d\P_X(u)\d\P_Y(t),\\
\E H_Q^\Lambda(Z',Z'',z)
&=\int_{\cX^2}
\phi\{Q_y(u)-Q_y(v)\}
\,\d\P_X(u)\d\P_X(v),
\end{align*}
because $\E[\ind(Y\le t)-Q_t(X)\mid X]=0$. The sum of these three expressions is $A^\Lambda_{\phi,\P}(y)+B^\Lambda_{\phi,\P}(x) +C^\Lambda_{\phi,\P}(x,y)$. Since $\E H_Q^\Lambda=\beta_\phi\Lambda_\phi(\P)$, Theorem~\ref{thm:eif_generalized_chatterjee}(ii) gives
\[
\pi_Q^\Lambda(z)
=\beta_\phi\widetilde\Lambda_{\phi,\P}(z).
\]
For general $q$, the differences of all three conditional expectations from their values at $Q$ are bounded in $L^2(\P)$ by $C\|\delta\|_{2,\P_X\otimes\P_Y}$. Indeed, $\phi$ and $\phi'$ are Lipschitz on $[-1,1]$, $\ind(Y\le t)-q_t(X)$ is bounded, and each conditional expectation integrates the remaining covariate and threshold arguments against their marginal laws. Thus,
\[
\|\pi_q^\Lambda-\pi_Q^\Lambda\|_{L^2(\P)}
\le C\|\delta\|_{2,\P_X\otimes\P_Y},
\]
and conditional variance control gives
\[
(\PP_{n,k}-\P)(\pi_q^\Lambda-\pi_Q^\Lambda)
=O_\P\left(
\frac{\|\delta\|_{2,\P_X\otimes\P_Y}}{\sqrt{n_k}}
\right).
\]
Combining this with the previous calculations proves
\begin{align*}
\frac{\widehat N^\Lambda_{\phi,k}
+\PP_{n,k}\widehat C^\Lambda_{\phi,k}}{\beta_\phi}
-\Lambda_\phi(\P)
={}&(\PP_{n,k}-\P)\widetilde\Lambda_{\phi,\P}+O_\P\left(r_n^2+\frac{r_n}{\sqrt n}+\frac1n\right).
\end{align*}

Finally, multiply the two foldwise expansions by $n_k/n$ and sum over $k$. Because $K$ is fixed and $n_k\asymp n$,
\[
\sum_{k=1}^K\frac{n_k}{n}(\PP_{n,k}-\P)f
=(\PP_n-\P)f
=\frac1n\sum_{i=1}^n\{f(Z_i)-\P f\}.
\]
Both EIFs have $\P$-mean zero, so this gives \eqref{eq:AL_two_general} for $a=\xi$ and $a=\Lambda$. The rate $r_n=o_\P(n^{-1/4})$ makes the remainder $o_\P(n^{-1/2})$. 
\end{proof}

\begin{proof}[Proof of Corollary~\ref{cor:efficient_xi}]
Fix a fold $k$, condition on its training observations, and write $q_t(x)=\widehat Q_{k,t}(x)$ and $\delta_t(x)=q_t(x)-Q_t(x)$. For $z=(x,y)$ and $z'=(x',y')$, define
\[
H_q(z,z')
:=2q_{y'}(x)\ind(y\le y')-q_{y'}(x)^2.
\]
By~\eqref{eq:xi_eff_double_sum}, the fold-$k$ contribution, before multiplication by $6n_k/n$ and subtraction of $2$, is exactly $n_k^{-2}\sum_{i,j\in\mathcal I_k}H_q(Z_i,Z_j)$. The population expectation has the exact decomposition
\begin{align*}
\E H_q(Z,Z')
&=\int_\R\int_\cX
\{2q_t(x)Q_t(x)-q_t(x)^2\}
\,\d\P_X(x)\d\P_Y(t)\\
&=\int_\R\int_\cX
\{Q_t(x)^2-\delta_t(x)^2\}
\,\d\P_X(x)\d\P_Y(t)\\
&=\Psi_\P-\|\delta\|_{2,\P_X\otimes\P_Y}^2.
\end{align*}
The bounded, generally nonsymmetric $V$-statistic admits the Hoeffding expansion
\begin{align*}
\frac1{n_k^2}\sum_{i,j\in\mathcal I_k}H_q(Z_i,Z_j)
={}&\E H_q(Z,Z')
+(\PP_{n,k}-\P)\pi_q+O_\P(n_k^{-1}),
\end{align*}
where its first-order projection is
\begin{align*}
\pi_q(x,y)
={}&\E H_q((x,y),Z')
+\E H_q(Z',(x,y))-2\E H_q(Z,Z')\\
={}&\int_\R
\{2q_t(x)\ind(y\le t)-q_t(x)^2\}\,\d\P_Y(t)\\
&+\int_\cX
\{2q_y(u)Q_y(u)-q_y(u)^2\}\,\d\P_X(u)
-2\E H_q(Z,Z').
\end{align*}
Substituting $q=Q$ gives
\begin{align*}
\pi_Q(x,y)
={}&\int_\R
\{2Q_t(x)\ind(y\le t)-Q_t(x)^2\}\,\d\P_Y(t)+\int_\cX Q_y(u)^2\,\d\P_X(u)-2\Psi_\P
=\frac16\widetilde\xi_\P(x,y),
\end{align*}
by Corollary~\ref{cor:eif_chatterjee}.

It remains to control the difference between $\pi_q$ and $\pi_Q$. Using $q=Q+\delta$ and $2qQ-q^2=Q^2-\delta^2$, we obtain the exact identity
\begin{align*}
\pi_q(x,y)-\pi_Q(x,y)
={}&\int_\R
\big[2\delta_t(x)\{\ind(y\le t)-Q_t(x)\}
-\delta_t(x)^2\big] \,\d\P_Y(t)\\
&-\int_\cX\delta_y(u)^2\,\d\P_X(u)
+2\|\delta\|_{2,\P_X\otimes\P_Y}^2.
\end{align*}
Since $|\delta_t(x)|\le1$, Jensen's inequality gives
\begin{align*}
&\E\left[
\left\{\int_\R|\delta_t(X)|\,\d\P_Y(t)\right\}^2
\right]
\le\|\delta\|_{2,\P_X\otimes\P_Y}^2\qquad\text{and}\qquad\E\left[
\left\{\int_\cX\delta_Y(u)^2\,\d\P_X(u)\right\}^2
\right]
\le\|\delta\|_{2,\P_X\otimes\P_Y}^2,
\end{align*}
and the same bound holds with $|\delta_t(X)|$ replaced by $\delta_t(X)^2$. Hence
\[
\|\pi_q-\pi_Q\|_{L^2(\P)}
\le C\|\delta\|_{2,\P_X\otimes\P_Y}.
\]
Conditionally on the training sample, therefore,
\[
(\PP_{n,k}-\P)(\pi_q-\pi_Q)
=O_\P\left(
\frac{\|\delta\|_{2,\P_X\otimes\P_Y}}{\sqrt{n_k}}
\right).
\]
Combining this conditional empirical bound with the exact population bias and the bounded-kernel $V$-statistic remainder yields
\begin{align*}
&\frac1{n_k^2}\sum_{i,j\in\mathcal I_k}H_q(Z_i,Z_j)-\Psi_\P=(\PP_{n,k}-\P)\frac{\widetilde\xi_\P}{6}
+O_\P\left(r_n^2+\frac{r_n}{\sqrt n}+\frac1n\right).
\end{align*}
Multiplying by $6n_k/n$, summing over the fixed number of folds, and using $\xi(\P)=6\Psi_\P-2$ give
\[
\widehat\xi_n^{\mathrm{eff}}-\xi(\P)
=\frac1n\sum_{i=1}^n\widetilde\xi_\P(X_i,Y_i)
+O_\P\left(r_n^2+\frac{r_n}{\sqrt n}+\frac1n\right).
\]
Assumption~\ref{ass:efficient_first_stage} makes the remainder $o_\P(n^{-1/2})$. 
\end{proof}

\section*{Acknowledgment of AI use}

The authors used AI during the preparation of this manuscript for language editing, literature exploration and cross-checking, and assistance in reviewing the exposition and technical arguments.  All mathematical results, proofs, interpretations, and cited references were independently checked and verified by the authors, who take full responsibility for the contents of the paper.

{
\bibliographystyle{apalike}
\bibliography{AMS}
}

\end{document}